\documentclass[12pt]{amsart}
\usepackage{amssymb}
\usepackage{graphicx,color}

\usepackage[all]{xy}
\usepackage{amscd}
\usepackage{hyperref}


\usepackage[active]{srcltx}

\nonstopmode

\pagestyle{headings}

\def\cocoa{{\hbox{\rm C\kern-.13em o\kern-.07em C\kern-.13em o\kern-.15em A}}}


\DeclareMathOperator{\reg}{reg}
\DeclareMathOperator{\Hl}{H}
\DeclareMathOperator{\Ker}{Ker}
\DeclareMathOperator{\length}{length}

\newcommand {\PP}{\mathbb{P}}

\newcommand {\ZZ}{\mathbb{Z}}

%
%

\newtheorem{theorem}{Theorem}[section]
\newtheorem{lemma}[theorem]{Lemma}

\newtheorem{proposition}[theorem]{Proposition}
\newtheorem{corollary}[theorem]{Corollary} \newtheorem{definition}[theorem]{Definition} 
\newtheorem{remark}[theorem]{Remark}
\newtheorem{problem}[theorem]{Problem}
\newtheorem{question}[theorem]{Question}
\newtheorem{example}[theorem]{Example}

\newcommand{\red}[1]{{\color{red} \sf   [#1]}}

%
%
\setlength{\textheight}{8.5truein}
\setlength{\textwidth}{6.5truein}
\setlength{\evensidemargin}{0truein}
\setlength{\oddsidemargin}{0truein}
\setlength{\topmargin}{0truein}

\numberwithin{equation}{section}
\hyphenation{semi-stable}

\title[On the intersection of ACM curves in $\PP^3$]{On the intersection of ACM curves in $\PP^3$}

\author[R.\  Hartshorne]{Robin Hartshorne}
\address{Department of Mathematics, University of California, Berkeley,  California, CA 94720-3840, USA}
\email{robin@math.berkeley.edu}

\author[R.\ M.\ Mir\'o-Roig]{Rosa M.\ Mir\'o-Roig${}^{*}$}
\address{Facultat de Matem\`atiques, Department d'\`Algebra i
 Geometria, Gran Via des les Corts Catalanes 585, 08007 Barcelona, Spain}
\email{miro@ub.edu}

\thanks{\noindent Printed \today \\
${}^{*}$  Part of the work for this paper was done while this
author was sponsored by MTM2010-15256.
}

\begin{document}

\subjclass[2010]{Primary 14C17, Secondary 14H45}
\keywords{Arithmetic genus, arithmetically Cohen-Macaulay, $h$-vector, biliaison type, numerical character}

\begin{abstract}    Bezout's theorem gives us the degree of intersection of two properly intersecting projective varieties.
As two  curves in $\PP^3$ never intersect properly, Bezout's theorem cannot be
directly used to bound the number of intersection points of such curves. In this work, we bound the maximum number of intersection points of two
integral ACM curves in $\PP^3$. The bound that we give is in many cases optimal as a function of only the degrees and the initial degrees of the
curves.
\end{abstract}

\maketitle

\tableofcontents

\section{Introduction}
In this paper we investigate the intersection of space curves. For varieties of complementary dimension in a projective space, their intersection is governed by Bezout's theorem: Thus two curves, of degrees $d$ and $e$, in the plane  intersect in $de$ points. Space curves do not ordinarily intersect. So we are led to pose the following question:

\begin{question} \rm Fixing some invariants of two (integral) curves  $C_1$ and $C_2$ in the projective 3-dimensional space $\PP^3$, what is the maximum number of intersection points of two such curves?
\end{question}

Since the genus of the union $C_1\cup C_2$ of two curves is determined by the genus of $C_1$ and $C_2$ individually and the number of their intersection points, our question is equivalent to

\begin{question} \rm Fixing some invariants of two  (integral) curves $C_1$ and $C_2$ in $\PP^3$, what is the maximum genus of the union of two such curves?
\end{question}
In this form our question is a generalization to reducible curves of the bounds known for irreducible curves by the work of many authors - the so-called Castelnuovo theory and the Halphen problem.

\vskip 2mm
In searching for answers to our questions, various other interesting questions arise. Is the maximum number of intersection points always attained when the two curves are in a common surface of the lowest degree that can contain both curves? If the maximum is attained, is the union of the two curves necessarily arithmetically Cohen-Macaulay? What can we say about the  set of points $T=C_1\cap C_2$ in  the case of  a maximum intersection?

\vskip 2mm
A complete answer to all these questions becomes quite complicated, depending on what is assumed about the initial curves $C_1$ and $C_2$. Therefore, we will pay special attention to situations in which restrictive hypotheses make possible a more concise answer. So for example if $C_1$ and $C_2$ are both complete intersection curves, a complete answer can be found by elementary means (see \S 2). If $C_1$ and $C_2$ are both
arithmetically Cohen-Macaulay (ACM for short) curves we can give good answers in many cases. The answers in general will fall into two parts: one is to establish an upper bound for the number of intersection points; the other is to ask whether this bound is actually attained for certain classes of curves.

\vskip 2mm
There seems to be scant attention to these questions  in the literature. If one of the curves is a line, we are asking for the maximum order of a multisecant line; this has been studied in various cases \cite{GP}, \cite{L}, \cite{N} and \cite{HS}. Giuffrida in \cite{G} and Diaz in \cite{D}
proved that the number  of intersection points of  two smooth non-planar irreducible
curves $C_1$ and $C_2$ in $\PP^3$ of degrees $d_1$ and $d_2$, respectively, is bounded by $(d_1-1)(d_2-1)+1$ and
the maximum is reached only if $C_1$ and $C_2$ are both on the same quadric surface.
 And a result of the second author
with Ranestad in \cite{MR-R} showed the existence of certain ACM curves with conjectured maximum order of intersection.

 \vskip 2mm
While many questions about space curves seem impossibly  complicated in general, there is the feeling that for ACM curves one should find reasonable answers. Thus the possible degrees, genus, postulation, and Hilbert schemes of ACM  curves are known, and depend only on  certain numerical invariants. For instance, the gonality of a general ACM  curve has been studied in \cite{HS}, the multisecant lines to ACM curves have been studied by Nollet in \cite{N} and Ellia has studied  the normal bundle to ACM  curves  in \cite{E}.

\vskip 2mm
Our motivation for this work was the hope that this  study of the  intersection of ACM  curves may help in finding the Gorenstein liaison class of finite sets of points in $\PP^3$ (cf. \cite{HSS}).

\vskip 2mm
Next we outline the structure of the paper. In section 2, we treat
the  case of complete intersection curves, where a complete answer can be obtained by elementary means. In section 3,
 we recall various numerical invariants associated to ACM curves, and we recall an important decomposition theorem (see Theorem \ref{decompThm}) for curves whose hyperplane section has a biliaison type with a gap. In section 4 we get bounds on the genus of the union of two ACM curves, which also give us bounds on their number of intersection points. For example we prove (see Theorem \ref{mainthm}) that if the biliaison character of the hyperplane section of $C_1\cup C_2$ has no gaps, then
 $$p_a(C_1\cup C_2)\le G_{CM}(d_1+d_2,max\{s_1,s_2\}).
 $$

 In section 5, we give some existence theorems for smooth curves and good surfaces that contain them. Then in section 6 we study linked curves, showing the existence of smooth linked curves with given $h$-vectors having the maximum number of intersection  points (see Theorem \ref{thm62}). This result enables us to prove  an old conjecture of the second author with Ranestad \cite[Conjecture 4.5 (a)]{MR-R}.

 In section 7 we consider "ordinary" ACM curves, those whose general hyperplane section consists of points in general position, and we compute the maximum number of intersection  points of two of them.

 We end with a short section of remaining open problems.

\vskip 2mm
 Throughout this paper we work  over an algebraically closed field of  arbitrary characteristic (except where otherwise noted). By the intersection of two curves $C_1$ and $C_2$ we mean the scheme-theoretic intersection $T=C_1\cap C_2$ and by the number of intersection points $\# (C_1\cap C_2)$ we mean the length of the zero-dimensional scheme $T$.


 \section{Complete intersection curves}

In this section we will consider the special case of complete intersection curves, where the results are elementary, to serve as an example and as a model for what we seek to achieve in more general cases.

    If $C$  is a complete intersection of two surfaces of degrees $s$ and $t$ in $\PP^3$, we will write $C=s \times t$ for short.

\begin{theorem} \label{ci}
Let $C_1$ and $C_2$ be distinct integral complete intersection curves $s_1\times t_1$ and $s_2\times t_2$. We assume $s_1\le t_1$, $s_2\le t_2$ and $s_1\le s_2$.
\begin{itemize}
\item[(a)] If $s_1=s_2=s$, then $\# (C_1\cap C_2)\le st_1t_2 $.
\item[(b)] If $s_1<s_2$ and $t_1\ge t_2$, then $\# (C_1\cap C_2)\le s_1s_2t_2 $.
\item[(c)] If $s_1<s_2$ and $s_2<t_1<t_2$, then $\# (C_1\cap C_2)\le s_1s_2t_1.$
\item[(d)] If  $s_1<s_2$ and $t_1\le s_2$,
then $\# (C_1\cap C_2)\le s_1t_1t_2$.
\end{itemize}
Furthermore, in each case the bounds are attained by smooth curves $C_1$, $C_2$ meeting transversally, and when they are, $C_1\cup C_2$ will be an ACM curve, and the intersection $T=C_1\cap C_2$ will be a complete intersection zero-dimensional scheme.
\end{theorem}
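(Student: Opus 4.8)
The plan is to prove the four upper bounds by a uniform B\'ezout-type argument, and then to attain each of them by an explicit construction in which $T=C_1\cap C_2$ is forced to be a complete intersection.

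Write $C_i=V(F_i)\cap V(G_i)$ with $\deg F_i=s_i$, $\deg G_i=t_i$, so $\deg C_i=s_it_i$ and $I_{C_i}=(F_i,G_i)$. The first elementary input is B\'ezout's theorem in the form: if $C$ is an integral curve of degree $\delta$ in $\PP^3$ not contained in a surface $S$ of degree $d$, then $C\cap S$ is zero-dimensional of length $\delta d$, and when $C=V(A)\cap V(B)$ this scheme equals the complete intersection $V(A)\cap V(B)\cap V(S)$. The second input is that a complete intersection $s\times t$ with $s\le t$ lies on no surface of degree $<s$. Since $C_1\neq C_2$ are integral, $C_1\not\subseteq C_2$, so $C_1$ fails to lie on at least one of $V(F_2)$, $V(G_2)$; taking that surface gives $\#(C_1\cap C_2)\le s_1t_1s_2$ or $\le s_1t_1t_2$, and as $s_2\le t_2$ this proves (a) (with $s_1=s_2=s$) and (d). For (b), since $s_1<s_2$ no surface of degree $s_1$ contains $C_2$, so $\#(C_1\cap C_2)\le\deg C_2\cdot s_1=s_1s_2t_2$. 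For (c) it suffices to show $C_1\not\subseteq V(F_2)$, for then $\#(C_1\cap C_2)\le\deg C_1\cdot s_2=s_1s_2t_1$: if $C_1\subseteq V(F_2)$ then, using $s_2<t_1$, $F_2\in(I_{C_1})_{s_2}=F_1\cdot k[x_0,\dots,x_3]_{s_2-s_1}$, so $F_1\mid F_2$; writing $F_2=F_1A$ with $\deg A=s_2-s_1<s_2$ and using that $F_2$ vanishes on the integral curve $C_2$, one of $F_1,A$ vanishes on $C_2$, contradicting the second input.

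For attainment, in case (a) take $F,G_1,G_2$ general of degrees $s,t_1,t_2$ and set $C_1=V(F)\cap V(G_1)$, $C_2=V(F)\cap V(G_2)$; then $C_1\cup C_2=V(F)\cap V(G_1G_2)$ is a complete intersection $s\times(t_1+t_2)$, hence ACM, and $T=V(F,G_1,G_2)$ has length $st_1t_2$. In cases (b), (c), (d), choose three of the four surfaces generically so that the complete intersection zero-scheme $Z$ they cut out --- respectively $V(F_1,F_2,G_2)$, $V(F_1,G_1,F_2)$, $V(F_1,G_1,G_2)$, of lengths $s_1s_2t_2$, $s_1s_2t_1$, $s_1t_1t_2$ --- is a reduced set of points at which the three surfaces are transversal; then take the fourth surface to be a general element of $I_Z$ in its prescribed degree. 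That degree is always one in which $I_Z$ already contains suitable multiples $F_1\cdot(\textrm{forms})$, etc., precisely because of the numerical hypotheses of the case, so such a surface exists; by the B\'ezout input the resulting curves have the stated degrees, and since the fourth surface lies in $I_Z$ one gets $C_1\cap C_2=Z$, a complete intersection zero-scheme. That $C_1,C_2$ are smooth, distinct, integral and meet transversally follows from Bertini away from $Z$, and at a point $z\in Z$ from the fact that the gradient of the fourth surface there is a linear combination of the gradients of the other three, so that transversality of $Z$ forces smoothness of both curves at $z$ and $T_zC_1\cap T_zC_2=0$. Finally $C_1\cup C_2$ is ACM: since $C_1,C_2$ are complete intersections, hence ACM, the Mayer--Vietoris sequence shows that $C_1\cup C_2$ is ACM if and only if $(I_{C_1})_n+(I_{C_2})_n=(I_Z)_n$ for every $n$; the inclusion $\subseteq$ holds because $Z\subseteq C_1$ and $Z\subseteq C_2$, while $\supseteq$ holds because $I_Z$ is generated by the three cutting surfaces of $Z$, each of which lies in $I_{C_1}$ or $I_{C_2}$.

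The main obstacle is the attainment in (b)--(d): one must ensure that the fourth surface can be chosen so that both complete intersection curves are simultaneously smooth, irreducible and distinct and meet transversally exactly along $Z$ with no excess. Smoothness at the base points $Z$ is handled by the gradient argument above, but smoothness and irreducibility elsewhere rest on Bertini-type theorems that require care in positive characteristic and some low-degree checking (one must verify in each case that the relevant linear system on the fixed surface is not composed with a pencil). By contrast the ACM conclusion is essentially formal once the construction is in place, via the identity $(I_{C_1})_n+(I_{C_2})_n=(I_Z)_n$.
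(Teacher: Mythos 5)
Your proposal is correct, and it reaches the same conclusions as the paper by a genuinely different route. For the bounds, the paper argues case by case, splitting on whether a common surface of low degree exists and computing $C_1\cdot C_2$ via linear equivalence ($C_i\sim t_iH$) on that surface; you instead give a single B\'ezout argument in $\PP^3$, using that $C_1\not\subseteq C_2$ forces $C_1$ off one of the two surfaces cutting $C_2$, that an $s_2\times t_2$ complete intersection lies on no surface of degree $<s_2$, and (for (c)) the divisibility argument $F_2=F_1A$ — this is uniform, avoids intersection theory on possibly singular surfaces, and in (c) actually proves the containment statement the paper only asserts. For attainment, the paper places both curves on a common smooth surface of degree $s$, $t_1$, $t_2$ or $s_2$, taking the second curve in a linear system there, so that ACM-ness of the union comes from biliaison and $T$ is visibly a curve cut by one surface; you instead fix the complete intersection zero-scheme $Z$ cut by three of the four forms and take the fourth form general in $I_Z$, which makes $T=Z$ and the identity $I_{C_1}+I_{C_2}=I_Z$ (hence ACM-ness, essentially the mechanism of Lemma \ref{ACM}) completely formal, at the price of the Bertini/base-locus analysis you describe. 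One detail there deserves a sentence more than you give it: membership of the fourth form in $I_Z$ together with transversality of $Z$ does \emph{not} by itself force smoothness of the corresponding curve at $z\in Z$ — if its gradient at $z$ happened to be proportional to that of its partner form the curve would be singular there — so you need the genericity of the fourth form to guarantee that at each of the finitely many points of $Z$ the coefficients on the other curve's two gradients are not both zero (which holds because the system contains members such as $F_2\cdot M$ with $M(z)\ne 0$); similarly, in case (d) the ambient base locus of $(I_Z)_{s_2}$ is all of $C_1$, and it is only after restricting to the fixed smooth surface $V(G_2)$ that the base locus shrinks to $Z$, as your final paragraph implicitly intends. These are fixable genericity points, not gaps, and your reliance on characteristic-zero Bertini statements is at the same level as the paper's own proof.
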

\begin{proof} (a) First we suppose that there is a common surface $S$ of degree $s$ containing both $C_1$ and $C_2$. Then $C_1 \sim t_1H$ and $C_2 \sim t_2H$ where $H$ is a hyperplane section of $S$. Thus the intersection number $C_1\cdot C_2$ is $t_1t_2 H^2=st_1t_2 $. This shows that the bound can be attained, and in this case $C_1\cup C_2=(t_1+t_2)H$
is a complete intersection $s\times (t_1+t_2)$, and the intersection $T=C_1\cap C_2$ is a complete intersection  $s\times t_1\times t_2$.
 Taking $S$ to be a smooth surface, and taking $C_1$ and $C_2$ general, we may assume that $C_1$ and $C_2$ are smooth, meeting transversally. Now suppose there is no such common surface $S$ of
degree $s$. Let $C_1\subset S$ and $C_2\nsubseteq S $. Then $C_1\cap C_2\subset S\cap C_2$ which has degree
$s(\deg(C_2))=s^2t_2$ by Bezout's theorem. Since $s\le t_1$, this is less than $st_1t_2$ and the first
case gives the maximum intersection.

\vskip 2mm
(b) Since $s_1<s_2$ and $C_2$ is irreducible, $C_2$ cannot be contained in a surface $S_1$ of degree $s_1$ containing $C_1$. Thus $C_1\cap C_2\subset S_1\cap C_2$ which has degree $s_1s_2t_2$. This proves the bound. If $t_2\le t_1$, this bound can be attained by choosing a surface $S$ of degree $t_1$ containing $C_2$. Then,
$C_1 \sim s_1H$ on $S$. So,  $C_1\cdot C_2=s_1(C_2 H)$ has degree $s_1(\deg(C_2))=s_1s_2t_2$. Since $C_2$ is a complete intersection $s_2\times t_2$, its ideal sheaf ${\mathcal I}_{C_2}$ is generated by global sections in degrees $\ge t_2$. Taking $C_2$ smooth, we can then find a smooth surface $S$ of degree $t_1$ containing $C_2$, and thus $C_1$ and $C_2$ smooth meeting transversally. In this case $C_1\cup C_2$ is obtained from $C_2$
 by a biliaison of height $s_1$ on $S$. Therefore it is ACM, but not necessarily a complete intersection. The intersection $T$ is however,
 a complete intersection $s_1\times s_2\times t_2$.

\vskip 2mm
(c) If $s_1<s_2<t_1<t_2$, then $C_1$ cannot be contained in a surface $S_2$ of degree $s_2$ containing $C_2$. So  $C_1\cap C_2\subset C_1\cap S_2$ which has degree $s_1s_2t_1$. This bound can be attained by taking a surface $S$ of degree $t_2$ containing both $C_1$ and $C_2$, in which case $C_2 \sim s_2H$. So,  $C_1\cdot C_2=s_2(\deg(C_1))=s_1s_2t_1$. In this case as in case (b), $C_1\cup C_2$ is ACM and $T$ is
 a complete intersection. In this case, as in (d) below, the existence of $C_{i}$ smooth is similar.

\vskip 2mm
(d) If there is a surface $S_2$ of degree $s_2$ containing $C_1$ and $C_2$, then $C_2 \sim t_2H$ on $S_2$. So,
$C_1\cdot C_2=t_2(C_1 H)$ has degree $t_2(\deg(C_1))=s_1t_1t_2$. If there is no such surface $S_2$, then $C_1\nsubseteq S_2$, so $C_1\cap C_2\subset C_1\cap S_2$ which has degree $s_1t_1s_2$ which is less than $s_1t_1t_2$. In the maximum case $C_1\cup C_2\sim C_1+t_2H$ is a biliaison of height $t_2$ from $C_1$ hence it is ACM.
\end{proof}

\begin{remark} \rm These results illustrate and suggest the following more general question: If $C_1$ and $C_2$ are ACM curves in $\PP^3$
with maximum number of intersection points, is the union $C_1\cup C_2$ necessarily an ACM curve? We will see that the answer is yes in many cases. On the other hand, it is rare that the intersection $T=C_1\cap C_2$ is a complete intersection scheme, but we can ask, what special properties does $T$ have? See discussion in section 8.
\end{remark}

 \begin{remark} \rm It is instructive to consider the case when $C_1$ is a line. In this case we are asking for the maximal order of a multisecant line to a complete intersection curve $C_2$. Theorem \ref{ci}(d) tells us that the maximum order of a multisecant line is $t_2$, which is consistent with Nollet's  determination of the maximum order of a multisecant line to any ACM curve (see \cite[Corollary 1.6]{N}). On the other hand, for a {\em general} complete intersection curve with  $s\ge 4$ (with few exceptions), the maximum order of a multisecant is 4 (see \cite[Theorem 1.4]{HS}). We can illustrate this result by an example. Take $C_1$ a line and $C_2$ a complete intersection $4\times 7$. Then $C_2$ is contained in a unique quartic surface $S$. If $C_2$ is general, the quartic surface must be also general. Since a general quartic  surface does not contain a line, the maximum number of intersection points is $L\cdot S=4$. On the other hand, if we take a special quartic surface containing  a line $L$, then $C_2$ will be $7H$ on $S$, and $L\cdot C_2=7=t_2$.
 So when we compute intersections of space curves in general, we should expect that the maximum intersection will be attained only by curves that are special in their Hilbert scheme.
\end{remark}



\section{Numerical invariants and the decomposition theorem}

In order to proceed, we need to make use of certain numerical invariants of ACM curves. In the literature there have been various different ways of  encoding this information: the numerical character of Gruson and Peskine \cite{GP}, the postulation character of \cite{MDP}, the $h$-vector \cite{MR} and the biliaison character $\lambda $ used in \cite{HS}. We will use the latter two in this paper, though all four systems can be easily translated from one to the other.

Given   a curve $C$ in $\PP^3$ with  homogeneous ideal $I_C$ and coordinate ring $R_C=k[x_0,\cdots ,x_3]/I_C$, we say that $C$ is arithmetically Cohen-Macaulay (ACM for short) if $R_C$ is a Cohen-Macaulay ring. We define the Hilbert function of an ACM curve $C$ in $\PP^3$ by $\Hl_C(\ell ) = \dim_k(R_C)_{\ell }$, and we define the $h$-vector of $C$ as
$h_C(\ell ) = \partial ^{2}\Hl_C(\ell )$, where $\partial $ is the difference function. If $Z$ is a 0-scheme in $\PP^2$, we define its $h$-vector analogously: $h_Z(\ell )=\partial \Hl_Z(\ell )$. It is clear that an ACM curve and its  general plane section  have the same $h$-vector.

\begin{definition}\rm \label{C2admissible}
A numerical function $h:\ZZ \longrightarrow \ZZ$  is  {\em C2-admissible} if it has the following properties for some integer $s\ge 1$:
$$\begin{cases}h(n)=0 \text{ for } n<0 \\
h(n)=n+1 \text{ for } 0\le n\le s-1, \\
h(n)\ge h(n+1) \text{ for } n\ge s-1, \\  h(n)=0 \text{ for } n\gg 0. \end{cases} $$

Furthermore,  $h$ is said to be of {\em decreasing type} if $h(a)>h(a+1)
$ for some $a$ implies  $h(n)>h(n+1)$ or $h(n)=0$ for all $n\ge a$.
\end{definition}

\begin{theorem}\label{h-vectorACM}  (a) If $C$ is an ACM curve in $\PP^3$, its $h$-vector is C2-admissible. Furthermore, every C2-admissible numerical function occurs as the $h$-vector of some ACM curve.

(b) If the ACM curve is integral, then its  $h$-vector is of decreasing type. Conversely,  if $h$ is a C2-admissible numerical function of decreasing type then there exists a smooth irreducible ACM curve $C\subset \PP^3$ with that $h$-vector.
\end{theorem}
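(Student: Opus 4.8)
The natural strategy is to translate everything into a statement about Hilbert functions of graded Artinian quotients of a polynomial ring in two variables, and then quote the appropriate classical results. Since $C$ is ACM, the ring $R_C$ is Cohen--Macaulay of dimension $2$, so a general linear form $\ell$ is a nonzerodivisor on $R_C$ and the general plane section $Z=C\cap H$ is a zero-dimensional scheme with $h_Z=h_C$. In $\PP^2$ every zero-dimensional scheme is ACM, so $R_Z$ is Cohen--Macaulay of dimension $1$; cutting by one more general linear form yields a graded Artinian quotient $A=R_Z/(\bar\ell)$ of a polynomial ring $k[u,v]$ in two variables, with $\dim_k A_n=h_C(n)$ for every $n$. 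Thus the first assertion of (a) is exactly the classical description --- essentially Macaulay's growth bound specialised to two variables, and the shape in which the numerical character of \cite{GP}, the postulation character of \cite{MDP} and the $h$-vector of \cite{MR} are all packaged --- of the possible Hilbert functions of such quotients: $A$ has the maximal dimension $n+1$ as long as the defining ideal is zero in that degree (that is, for $0\le n\le s-1$, where $s$ is the initial degree), and once $\dim_k A_n\le n$, Macaulay's bound in two variables forces $\dim_k A_{n+1}\le\dim_k A_n$, so the function is nonincreasing from $s-1$ onward and is eventually $0$.

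For the converse in (a) I would, given a C2-admissible $h$, first realise it as the Hilbert function of a monomial Artinian quotient of $k[u,v]$ --- a lex-segment ideal does the job --- and then lift that ideal twice, from $k[u,v]$ to $k[u,v,w]$ and then to $k[x_0,\dots,x_3]$, by the classical distraction (lifting) procedure, which replaces each monomial generator by a product of sufficiently general linear forms while preserving the graded Betti numbers, hence Cohen--Macaulayness and the $h$-vector. This produces a reduced ACM curve in $\PP^3$, namely a suitable union of lines, with $h$-vector $h$; alternatively one simply cites the realisation half of the Gruson--Peskine theorem.

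Part (b) is the substantive one. If $C$ is integral, then its general plane section $Z$ is a set of points in uniform position (Harris's theorem in characteristic $0$, and \cite{GP} in arbitrary characteristic with the weaker statement that suffices); the plan is to show that uniform position forbids a ``gap'', i.e.\ a strict decrease $h(a)>h(a+1)$ followed by a later positive plateau $h(n)=h(n+1)>0$, which is precisely the failure of the decreasing-type condition. The mechanism, which is the ``connectedness of the numerical character'' of Gruson--Peskine, is that such a gap, read off the Hilbert--Burch resolution of $I_Z$, forces a proper subscheme $Z'\subsetneq Z$ to be cut out on the degree-$s$ plane curve through $Z$ by a curve of small degree, with the residual points lying on another low-degree curve --- a partition of $Z$ into geometrically distinguished pieces, contradicting the uniform position of $Z$ and hence the irreducibility of $C$. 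For the converse in (b) one runs the construction of the previous paragraph (equivalently, an iterated basic double linkage starting from a line), arranging at each step by Bertini that the curve obtained is smooth and irreducible; the decreasing-type hypothesis is exactly what permits connectedness to be maintained throughout, and again this is the smooth-curve half of the Gruson--Peskine theorem.

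The part I expect to be hardest --- and the part I would ultimately lean on \cite{GP} for --- is the forward direction of (b): the implication ``$C$ integral $\Rightarrow$ no gap in the $h$-vector'' is the only step that genuinely uses the geometry of the general plane section rather than pure commutative algebra, and it carries real characteristic-$p$ subtleties, since uniform position can fail and one must work instead with the generic plane section together with a specialisation argument. Everything else --- the forward direction of (a), and both converses --- reduces to Macaulay's theorem in two variables together with routine lifting and Bertini-type arguments.
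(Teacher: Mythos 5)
Your proposal is correct, and it is essentially the paper's route: the paper offers no argument of its own for this theorem, simply citing Migliore's book \cite{Mi} and Gruson--Peskine \cite{GP}, and your sketch faithfully reconstructs the standard proofs contained in those references (Artinian reduction in two variables plus Macaulay's bound for the forward direction of (a), lex-segment ideals and lifting for its converse, and the Gruson--Peskine existence construction for the converse of (b)), while deferring, exactly as the paper does, to \cite{GP} for the one genuinely hard step, namely that integrality forces decreasing type. One small remark worth adding: for ACM curves that hard step also follows, in arbitrary characteristic and without any uniform-position argument, from Davis's theorem as stated in the paper (Theorem \ref{davis}): a gap in the biliaison type produces a proper ACM subcurve $D$ and a residual curve $B$ with $C=B\cup D$, contradicting integrality; this gives an internal proof of the forward direction of (b) using only material the paper already develops.
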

\begin{proof} The results are well known and appear many times in the literature in different languages. See for example the book of Migliore \cite{Mi} for statements and further references.

As far as we can tell part (b) was first proved in \cite{GP} using the numerical character. In that language the condition that  an $h$-vector should be of decreasing type is equivalent to the condition that the numerical character should have no gaps.
\end{proof}

From now on, given any curve $C$ in $\PP^3$ we denote by $s(C)$ the least degree of a surface containing $C$, i.e. $$s(C)= inf \{\ell\in \ZZ \mid I_{C,\ell }\ne 0\}$$ and we call it the {\em initial degree} of $C$; we denote by $e(C)$ the {\em index of speciality} of $C$
$$e(C)= sup \{\ell\in \ZZ \mid \Hl^1(C,{\mathcal O}_C(\ell ))\ne 0\},$$ and we  denote by $t(C)$ the {\em second ideal degree} of $C$, namely
$$t(C)=sup\{ \ell\in \ZZ \mid  I_{C,\le \ell} \text{ is not principal} \}.$$
The fundamental numerical invariants of an ACM curve can be easily computed using the $h$-vector. In fact, we have (see, for instance, \cite{Mi})

\begin{proposition} \label{deg-genus} Let $C$ be an ACM curve in $\PP^3$ with $h$-vector $h(n)=c_n$, so we can write $h=\{c_0=1,c_1,c_2,\cdots , c_b \}$ where $b=sup\{ n\in \ZZ \mid h(n)>0 \}$. Then
 $$\begin{array}{rcl} \deg(C) & = & \sum _{i=0}^{b}c_i,\\
 p_a(C) & = & \sum _{i=2}^{b} (i-1)c_i, \\
 s(C) & = & inf \{n \in \ZZ \mid c_n<n+1 \}, \\
 t(C) & = & inf \{ n\ge s \mid c_n<s \} ,
 \\  e(C) & = & b-2 ,\\
 \reg(C) & = & b+1.\end{array}$$
\end{proposition}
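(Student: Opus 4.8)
The plan is to deduce all six equalities from one object, the Hilbert function $\Hl_C$, which is reconstructed from the $h$-vector by two applications of the partial-summation operator $\sigma$ inverse to $\partial$. If $Z$ denotes a general plane section of $C$, then (using that $C$ is ACM, so a general linear form is a nonzerodivisor on $R_C$) one has $\partial\Hl_C=\Hl_Z$ and $\partial\Hl_Z=h_Z=h_C$; hence $\Hl_Z(\ell)=\sum_{i=0}^{\ell}c_i$ and $\Hl_C(\ell)=\sum_{j=0}^{\ell}\sum_{i=0}^{j}c_i$ for $\ell\ge 0$. Since $c_n=0$ for $n>b$, the value $\Hl_Z(\ell)$ stabilizes for $\ell\ge b$ at $\sum_{i=0}^{b}c_i$, which is $\deg Z=\deg C$; that gives the degree formula. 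Summing once more, $\Hl_C(\ell)=\deg(C)\,\ell+\bigl(\sum_{j=0}^{b-1}\Hl_Z(j)-(b-1)\deg C\bigr)$ for all $\ell\ge b-1$, so this affine-linear expression is the Hilbert polynomial $P_C(\ell)=\deg(C)\ell+1-p_a(C)$. Writing $\Hl_Z(j)=\deg C-\sum_{i=j+1}^{b}c_i$, interchanging summations (the one step worth doing carefully: $\sum_{j=0}^{b-1}\sum_{i=j+1}^{b}c_i=\sum_{i=1}^{b}i\,c_i$), and substituting $\deg C=\sum_{i=0}^{b}c_i$, one obtains $1-p_a(C)=\deg C-\sum_{i=1}^{b}i\,c_i=1-\sum_{i=2}^{b}(i-1)c_i$, i.e.\ the genus formula.

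For $s(C)$ and $t(C)$ the idea is to compare $\Hl_C$ with two reference Hilbert functions and to translate ``agreement on an initial segment of degrees'' into a condition on the $h$-vector via $\partial^2$. First, $I_{C,\ell}=0$ if and only if $\Hl_C(\ell)=\binom{\ell+3}{3}=\Hl_{\PP^3}(\ell)$; since both functions vanish in negative degrees and coincide in degree $0$, equality at $\ell$ is equivalent to equality of second differences on $\{0,\dots,\ell\}$, i.e.\ to $c_n=n+1$ for $0\le n\le\ell$. By C2-admissibility $c_n=n+1$ for $0\le n\le s-1$ while $c_s\le c_{s-1}=s<s+1$, so $\inf\{n:c_n<n+1\}=s=s(C)$. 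Next, $I_{C,\le\ell}$ is principal exactly when $\Hl_C$ agrees on $\{0,\dots,\ell\}$ with the Hilbert function $\binom{m+3}{3}-\binom{m-s+3}{3}$ of a hypersurface of degree $s=s(C)$ (the only candidate generator being a form spanning the line $I_{C,s}$); passing to second differences, the $h$-vector of that hypersurface is $\min(n+1,s)$, so the condition becomes $c_n=s$ for all $s\le n\le\ell$. Using again C2-admissibility (which forces $c_n\le s$ for $n\ge s-1$), this persists precisely until $\ell$ reaches the first $n\ge s$ with $c_n<s$, giving $t(C)=\inf\{n\ge s:c_n<s\}$.

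For $e(C)$ and $\reg(C)$ I would pass to sheaf cohomology, exploiting that $C$ is ACM. Then $\Hl^1_*(\mathcal I_C)=0$, so the restriction $\Hl^0(\mathcal O_{\PP^3}(\ell))\to\Hl^0(\mathcal O_C(\ell))$ is surjective and $\dim\Hl^0(\mathcal O_C(\ell))=\Hl_C(\ell)$; consequently $\dim\Hl^1(\mathcal O_C(\ell))=\Hl_C(\ell)-\chi(\mathcal O_C(\ell))=\Hl_C(\ell)-P_C(\ell)$. From the first paragraph, $\Hl_C(\ell)=P_C(\ell)$ for every $\ell\ge b-1$, whereas at $\ell=b-2$ one has $\Hl_C(b-2)-P_C(b-2)=c_b>0$; hence $e(C)=b-2$. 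Finally, from $0\to\mathcal I_C\to\mathcal O_{\PP^3}\to\mathcal O_C\to 0$ one gets $\Hl^2(\mathcal I_C(\ell))\cong\Hl^1(\mathcal O_C(\ell))$, while $\Hl^1(\mathcal I_C(\ell))=0$ for all $\ell$ (ACM) and $\Hl^3(\mathcal I_C(\ell))\cong\Hl^3(\mathcal O_{\PP^3}(\ell))=0$ for $\ell\ge -3$; so $C$ is $m$-regular if and only if $\Hl^2(\mathcal I_C(m-2))=\Hl^1(\mathcal O_C(m-2))=0$, i.e.\ $m-2\ge e(C)+1=b-1$, which yields $\reg(C)=b+1$.

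The substantive content is concentrated in two places. One is the Hilbert-polynomial bookkeeping, which links the $\deg$, $p_a$ and $e$ formulas; it is routine once the summation interchange is carried out. The other, and the part deserving the most care, is identifying the correct reference Hilbert functions for $s(C)$ and $t(C)$ --- that of $\PP^3$ and that of a degree-$s$ hypersurface --- and justifying the reduction ``equality of Hilbert functions on an initial segment of degrees $\iff$ equality of the associated $h$-vectors there'', which rests on both functions vanishing in negative degrees and agreeing in degree $0$. Granting these, all six formulas fall out of Definition~\ref{C2admissible} and elementary manipulation; this is standard material, so the write-up is mainly a matter of organizing these observations cleanly.
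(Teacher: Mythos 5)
Your proof is correct. There is nothing in the paper to compare it with: the proposition is stated as a known fact with only a pointer to Migliore's book \cite{Mi}, so your write-up supplies the standard derivation that such a reference contains (reconstruct $\Hl_C$ by summing the $h$-vector twice, read off the Hilbert polynomial, compare with the Hilbert functions of $\PP^3$ and of a degree-$s$ hypersurface for $s(C)$ and $t(C)$, and use $\Hl^1_*({\mathcal I}_C)=0$ to convert the tail of the $h$-vector into $e(C)$ and $\reg(C)$). Two small points are worth tightening. In the $s(C)$ step, the asserted biconditional ``$\Hl_C(\ell)=\binom{\ell+3}{3}$ iff $c_n=n+1$ for $0\le n\le\ell$'' needs, in the forward direction, the remark that $I_{C,m}\ne 0$ forces $I_{C,m+1}\ne 0$ (so vanishing of $I_{C,\ell}$ gives vanishing in all lower degrees); in fact only the summation direction is used to identify $s(C)$, so the looseness is harmless. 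In the $t(C)$ step, the phrase ``the only candidate generator being a form spanning the line $I_{C,s}$'' tacitly assumes $\dim I_{C,s}=1$, i.e.\ $c_s=s$; when $c_s<s$ one has $\dim I_{C,s}=s+1-c_s\ge 2$, so $I_{C,\le s}$ is already non-principal and the formula correctly returns $t(C)=s$ --- one sentence covering this case completes the argument. Finally, the paper's displayed definition of $t(C)$ (a supremum over degrees where $I_{C,\le\ell}$ is \emph{not} principal) is evidently a misprint, since that supremum is infinite for any curve; your reading of $t(C)$ as the first degree in which a second minimal generator is required is the one consistent with the stated formula, with Remark \ref{d-g-lambda}, and with the complete-intersection case, so your proof establishes the intended statement.
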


We will also use the biliaison type $ \lambda$ of an ACM curve.

\begin{definition} \label{def-gap} \rm For any C2-admissible numerical function $h$ we define $$k_i=\# \{n \mid h_C(n)\ge s+1-i \} \text{ for } 1\le i \le s.$$
The sequence $\lambda =\{k_1,k_2,\cdots ,k_s\}$ is called the associated {\em biliaison type}.
\end{definition}

\begin{remark}\rm \label{d-g-lambda}
 The biliaison type gets its name from the property that an ACM curve $C$ in $\PP^3$ with biliaison type  $\lambda _C=\{k_1,k_2,\cdots ,k_s\}$ is obtained by a sequence of special biliaisons of height one from the empty curve, on surfaces of degrees $k_i$ \cite[Corollary 7.4]{HS}. In terms of the biliaison type $\lambda _C=\{k_1,k_2,\cdots ,k_s\}$ we have
$$
\begin{array}{rcl}
\deg(C) & = & \sum _{i=1}^sk_i, \\
p_a(C) & = & 1+\sum _{i=1}^s\frac{k_i(k_i-3)}{2}+\sum _{i=1}^s (s-i)k_i, \\
s(C) & = & s=\length (\lambda _C), \\
t(C) & = & s+k_1-1, \\
e(C) & = & k_s-3, \\
\reg(C) & = & k_s.
\end{array}
$$

In this language, Theorem \ref{h-vectorACM}  says that for any ACM curve $C$ in $\PP^3$, $\lambda _C=\{k_1,k_2,\cdots ,k_s\}$ is a strictly increasing set of positive  integers, and conversely, any such set of positive integers $k_1<k_2<\cdots <k_s$ occurs as  the biliaison type  $\lambda $ of some ACM curve $C$ in $\PP^3$.  We say that   $\lambda =\{k_1,k_2,\cdots ,k_s\}$  {\em has a gap}  if $k_{i+1}-k_i\ge 3$ for some $1\le i \le s-1$
The condition that an $h$-vector is of decreasing type is equivalent to saying that the biliaison type  $\lambda $ has no gaps.
\end{remark}

The study of ACM curves whose $h$-vector is not of decreasing type was started by Davis in \cite{Da}. In the language of the biliaison type  $\lambda $ his main result is

\begin{theorem}\label{davis} Let $C$ be an ACM curve in $\PP^3$ whose  biliaison type  $\lambda _C=\{k_1,k_2,\cdots ,k_s\}$ has a gap at $t$, so that $k_{t+1}\ge k_t+3$. Then $C$ has an ACM curve subcurve $D$ with $\lambda _D=\{k_{t+1},\cdots ,k_s\}$ and the residual curve $B$ is also ACM and has $\lambda _B=\{k_1,\cdots ,k_t\}$.

Furthermore, in this case $ \# (B\cap D)=\deg(B)·s(D)$, and also $\deg(B)·s(D)<\deg(D) ·s(B)$.
\end{theorem}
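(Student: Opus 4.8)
These three assertions separate cleanly: statement (i), the existence of the ACM subcurve $D$ and ACM residual $B$ with the indicated biliaison types, is in substance Davis's theorem on ACM curves whose $h$-vector is not of decreasing type (\cite{Da}; see also \cite{Mi}), while (ii) and (iii) are numerical consequences of (i) together with the formulas of Remark \ref{d-g-lambda}. For (i) the plan is to reduce to the plane. Let $Z = C \cap H \subset \PP^2$ be a general plane section, a zero-dimensional scheme with $h_Z = h_C$. Since $k_{i+1}-k_i = \#\{\,n \mid h_C(n) = s-i\,\}$ and $\{n \mid h_C(n)\ge m\}$ is an interval for each $m\ge 1$, the hypothesis $k_{t+1}\ge k_t+3$ says precisely that $h_Z$ takes the constant value $\sigma := s-t$ on the interval $[k_t,\,k_{t+1}-1]$ of length $\ge 3$, with strictly larger value at $k_t-1$ and strictly smaller value at $k_{t+1}$. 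Davis's lemma for zero-schemes in $\PP^2$ then yields a plane curve $W$ of degree $\sigma$ and a decomposition $I_Z = I_{Z'}\cap I_{Z''}$ with $Z'' = Z\cap W$, and a direct check against Definition \ref{def-gap} identifies $h_{Z'}$ as the $h$-vector of biliaison type $\{k_1,\dots,k_t\}$ and $h_{Z''}$ as that of $\{k_{t+1},\dots,k_s\}$. Because $C$ is ACM its minimal free resolution reduces modulo a general linear form to that of $Z$, so this decomposition lifts: $D$ is the maximal subcurve of $C$ lying on the surface of degree $\sigma$ reducing to $W$, and $B$ is its residual; both are ACM with the asserted $h$-vectors, hence $\lambda_B = \{k_1,\dots,k_t\}$ and $\lambda_D = \{k_{t+1},\dots,k_s\}$. (A gap forces $h_C$ not to be of decreasing type, so by Theorem \ref{h-vectorACM}(b) $C$ is non-integral and $B,D$ are proper subcurves.)

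Granting (i), the intersection number is forced. From $C = B\cup D$ one has $I_C = I_B\cap I_D$ (both curves are pure one-dimensional and together exhaust the components of $C$), so $0\to \mathcal O_C\to \mathcal O_B\oplus\mathcal O_D\to \mathcal O_{B\cap D}\to 0$ is exact; comparing Euler characteristics gives $\#(B\cap D) = p_a(C)-p_a(B)-p_a(D)+1$. Substituting the genus formula of Remark \ref{d-g-lambda} for $C$ ($\lambda=\{k_1,\dots,k_s\}$, length $s$), for $B$ (length $t$) and for $D$ (length $s-t$): the quadratic terms $\sum \frac{k_i(k_i-3)}{2}$ cancel identically, the constants cancel, and the linear terms reduce to $\sum_{i=1}^{t}\big[(s-i)-(t-i)\big]k_i = (s-t)\sum_{i=1}^{t}k_i$. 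Since $s(D) = \length(\lambda_D) = s-t$ and $\deg(B) = \sum_{i=1}^{t}k_i$, this equals $\deg(B)\,s(D)$, which is (ii).

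For (iii), note $s(B) = \length(\lambda_B)= t$, so the claim is $(s-t)\deg(B) < t\deg(D)$. As $k_1<\cdots<k_s$, we have $\deg(B) = \sum_{i=1}^{t}k_i \le t\,k_t$ and $\deg(D) = \sum_{j=t+1}^{s}k_j \ge (s-t)\,k_{t+1}$; since $k_t < k_{t+1}$ and $t,\,s-t\ge 1$, $(s-t)\deg(B)\le (s-t)t\,k_t < (s-t)t\,k_{t+1}\le t\deg(D)$. (This step uses only that $\lambda$ is strictly increasing and $t<s$, not the full width of the gap.) The principal difficulty in the whole argument is the lifting step in (i): the splitting surface has degree $\sigma = s-t < s = s(C)$, so it does not lift an element of $I_C$, and one must instead argue that the plane-section decomposition of $Z$ is canonical and survives restriction modulo a general linear form, and that the two resulting subschemes of $\PP^3$ are genuinely arithmetically Cohen--Macaulay. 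It is exactly here that the full gap width $k_{t+1}-k_t\ge 3$ — as opposed to a mere two-step flat segment of $h_C$, which is not a gap and yields no decomposition — is indispensable; a direct attack in $\PP^3$ through the biliaison chain of Remark \ref{d-g-lambda} runs into the fact that biliaison produces only a linear equivalence on a surface, not an inclusion, so one still needs the gap to force the relevant linear system to have a reducible general member.
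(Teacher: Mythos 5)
Your proposal is correct and follows essentially the same route as the paper: the existence of $B$ and $D$ is deferred to Davis (the paper cites \cite{Da} or \cite[Proposition 7.18]{HS}), and both $\#(B\cap D)=\deg(B)\,s(D)$ and $\deg(B)\,s(D)<\deg(D)\,s(B)$ are obtained exactly as in the paper from the genus formulas of Remark \ref{d-g-lambda} together with $p_a(B\cup D)=p_a(B)+p_a(D)+\#(B\cap D)-1$ (Lemma \ref{genus-union}, which you rederive via the Mayer--Vietoris sequence instead of citing) and the identical chain $(s-t)\sum_{i\le t}k_i\le (s-t)t\,k_t<(s-t)t\,k_{t+1}\le t\sum_{i>t}k_i$. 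Your supplementary sketch of the plane-section/lifting argument for the existence part is imprecise in its details (the value $s-t$ is taken once on the ascending side and on a descending plateau of length $k_{t+1}-k_t-1\ge 2$, not on the interval you name) and leaves the lifting step unresolved, but since both you and the paper ultimately rely on \cite{Da} for that part, this does not affect the verdict.
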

\begin{proof} For the existence of $B$ and $D$ see \cite{Da} or \cite[Proposition 7.18]{HS}  for an alternative proof.

For the second statement, using Remark \ref{d-g-lambda}, we write out the formulas for $p_a(B\cup D)$, $p_a(B)$, and $p_a(D)$ in terms of the $k_i$. A simple calculation shows that
$$ p_a(B\cup D)=p_a(B)+p_a(D)+(s-t)(\sum _{i=1}^tk_i)-1.
$$
Then by Lemma \ref{genus-union} below, $ \# (B\cap D)=(s-t)(\sum _{i=1}^tk_i)=\deg(B)·s(D)$.
Note also that
$$
(s-t)(\sum _{i=1}^tk_i)\le (s-t)tk_t<(s-t)tk_{t+1}\le t(\sum _{i=t+1}^sk_i),
$$
so $\deg(B)·s(D)<\deg(D)·s(B)$.
\end{proof}

 \begin{lemma} \label{genus-union} Let $C_1$ and $C_2$ be curves in $\PP^3$ with no common component. Then
 $$p_a(C_1\cup C_2)=p_a(C_1)+p_a(C_2)+\# (C_1\cap C_2)-1.$$
 \end{lemma}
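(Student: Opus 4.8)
The plan is to compute Euler characteristics of structure sheaves by means of the standard Mayer--Vietoris sequence. Write $C=C_1\cup C_2$ for the scheme-theoretic union, cut out by $\mathcal{I}_{C_1}\cap \mathcal{I}_{C_2}$, and $T=C_1\cap C_2$ for the scheme-theoretic intersection, cut out by $\mathcal{I}_{C_1}+\mathcal{I}_{C_2}$. First I would record the short exact sequence of sheaves on $\PP^3$
$$0 \longrightarrow \mathcal{O}_C \longrightarrow \mathcal{O}_{C_1}\oplus \mathcal{O}_{C_2} \longrightarrow \mathcal{O}_T \longrightarrow 0,$$
where the left map is $f\mapsto (f|_{C_1},f|_{C_2})$ and the right map is $(g,h)\mapsto g|_T-h|_T$. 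Exactness is a purely local statement, reducing to the elementary commutative-algebra fact that $R/(I\cap J)\hookrightarrow R/I\oplus R/J\twoheadrightarrow R/(I+J)$ is exact, checked stalk by stalk.

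Next, since $C_1$ and $C_2$ have no common component, $T$ is supported on finitely many points and hence is zero-dimensional; therefore $\chi(\mathcal{O}_T)=h^0(\mathcal{O}_T)=\length(T)=\#(C_1\cap C_2)$. Taking Euler characteristics in the sequence above gives
$$\chi(\mathcal{O}_C)=\chi(\mathcal{O}_{C_1})+\chi(\mathcal{O}_{C_2})-\#(C_1\cap C_2).$$
Now I would use the fact that for any closed one-dimensional subscheme $X\subseteq \PP^3$ one has $p_a(X)=1-\chi(\mathcal{O}_X)$ (the arithmetic genus is $(-1)^{\dim X}(\chi(\mathcal{O}_X)-1)$, and $\chi(\mathcal{O}_X)$ is the constant term of the Hilbert polynomial of $X$). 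Because $C_1$ and $C_2$ share no component, $C=C_1\cup C_2$ is again purely one-dimensional, so this applies to $C$ as well as to $C_1$ and $C_2$. Substituting $\chi(\mathcal{O}_X)=1-p_a(X)$ for $X=C,C_1,C_2$ into the displayed identity and rearranging yields exactly
$$p_a(C_1\cup C_2)=p_a(C_1)+p_a(C_2)+\#(C_1\cap C_2)-1.$$

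There is no serious obstacle; the only point requiring a moment's care is that the Mayer--Vietoris sequence holds at the scheme-theoretic level, i.e.\ that we really are using the ideal-intersection and ideal-sum scheme structures for $C_1\cup C_2$ and $C_1\cap C_2$, after which the sheaf sequence is an elementary exactness check. One should also remark that connectedness of $C_1\cup C_2$ is not needed: the formula holds verbatim when $C_1$ and $C_2$ are disjoint, in which case $\#(C_1\cap C_2)=0$.
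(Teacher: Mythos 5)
Your proof is correct and is the standard Mayer--Vietoris argument: the exact sequence $0\to\mathcal{O}_{C_1\cup C_2}\to\mathcal{O}_{C_1}\oplus\mathcal{O}_{C_2}\to\mathcal{O}_{T}\to 0$, additivity of $\chi$, and $p_a(X)=1-\chi(\mathcal{O}_X)$. The paper does not prove this lemma but simply cites \cite[Proposition 4]{M1}, where essentially this same computation is carried out, so there is nothing to add.
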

 \begin{proof} See \cite[Proposition 4]{M1}.
 \end{proof}

Everything we have said so far has been for ACM curves in $\PP^3$, and the above results hold in arbitrary characteristic. When we consider curves in $\PP^3$ that are not necessarily ACM, the analogous results are more subtle, and their proofs often use a hypothesis of characteristic zero. For any curve $C$ in $\PP^3$, we consider a general hyperplane section $Z=C\cap H$. It is a zero-dimensional scheme in $\PP^2$, hence $Z$ is ACM and we can speak of the $h$-vector or the biliaison type of $Z$. A well-known result is

\begin{theorem}  If $C$ is an integral curve in $\PP^3$, then its general hyperplane section $Z$ has an $h$-vector of decreasing type.
\end{theorem}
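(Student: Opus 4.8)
The plan is to argue by contradiction, combining the uniform position (monodromy) of the generic hyperplane section with Davis's decomposition theorem for $0$-schemes in $\PP^2$.

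First I would set up the monodromy. Put $d=\deg C$ and work in characteristic zero. Let $U\subset(\PP^3)^*$ be the dense open set of hyperplanes $H$ for which $Z_H:=C\cap H$ is a reduced set of $d$ points --- nonempty by Bertini/generic smoothness in characteristic zero --- and form the incidence variety $\mathcal I=\{(H,p)\mid p\in C\cap H\}$. Projection to $C$ realizes $\mathcal I$ as a $\PP^2$-bundle over $C$, so $\mathcal I$ is irreducible because $C$ is integral, and $\mathcal I|_U\to U$ is a connected finite étale cover of degree $d$. Connectedness forces the monodromy group $G\le\mathfrak S_d$ to act transitively on the fibre $Z_H$ for general $H$. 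The consequence I want to extract is: $Z_H$ admits no subset $W$ with $0<\#W<d$ that is canonically associated to $Z_H$, since such a $W$ would be a proper nonempty $G$-invariant subset of the $d$ points, which transitivity forbids.

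Next, suppose $h_Z$ (for $H$ general, so $Z:=Z_H$ is $d$ reduced points) is not of decreasing type; then it has a gap. I would invoke Davis's theorem for $0$-schemes in the plane --- the planar analogue of Theorem~\ref{davis} --- which, from the gap, produces an \emph{intrinsic} splitting $Z=Z'\sqcup Z''$: there is a plane curve $\Gamma\subset H$ of degree $a$, namely the greatest common divisor of the forms of low degree in $I_Z$, with $Z'=Z\cap\Gamma$ and $Z''\not\subset\Gamma$ the residual part. A short computation with the degree and genus formulas of Proposition~\ref{deg-genus} shows $0<\#Z'<d$. Since $\Gamma$ --- hence $Z'$ --- is determined canonically by $Z$, the set $Z'$ is $G$-invariant, a proper nonempty $G$-invariant subset of $Z_H$, contradicting transitivity. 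Hence $h_Z$ is of decreasing type.

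The hard part will be establishing that the Davis decomposition is genuinely canonical, i.e.\ that the ``splitting curve'' $\Gamma$ is uniquely pinned down --- as the gcd of the generators of $I_Z$ in the relevant low degrees --- so that $Z'$ really is monodromy-invariant; the attendant bookkeeping that makes the invariant set $Z'$ \emph{proper} and nonempty is routine from Proposition~\ref{deg-genus}. The characteristic-zero hypothesis is used precisely twice: to know the generic hyperplane section is reduced, and to know the cover $\mathcal I|_U\to U$ is connected with transitive monodromy; both can fail for ``strange'' curves in positive characteristic, which is why the statement is genuinely characteristic-dependent.
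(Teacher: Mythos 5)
Your argument is, in substance, the second of the two routes the paper cites: the paper's ``proof'' is only a pair of references --- Gruson--Peskine \cite{GP}, who prove the statement in arbitrary characteristic via the numerical character, and Harris's uniform position theorem \cite{Ha}, from which the result follows in characteristic zero. You have reconstructed the Harris route, and the skeleton is sound: irreducibility of the incidence variety $\mathcal I\to(\PP^3)^*$ gives transitive monodromy on the general fibre (note you only need transitivity, not the full symmetric group that Harris actually proves), a gap in $h_Z$ forces a flat spot $h(n)=h(n+1)=c\le n$ with $c>0$, and Davis's theorem for $0$-schemes in $\PP^2$ then splits $Z$ along the degree-$c$ GCD of $(I_Z)_{\le n+1}$ into a proper nonempty canonical piece, contradicting transitivity. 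The point you flag as ``the hard part'' is indeed the one place where real work remains: to contradict transitivity you must show that $H\mapsto Z'_H$ is an algebraic multisection over a dense open subset of $U$ (constancy of the generic $h$-vector plus the GCD construction in families), so that $\bigcup_H\{H\}\times Z'_H$ is a proper nonempty union of sheets of the irreducible cover $\mathcal I|_U$; ``canonically defined fibrewise'' alone is not yet a monodromy-invariance statement.

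The one genuine discrepancy with the statement as printed is characteristic. The theorem carries no characteristic hypothesis (the paper's blanket convention is arbitrary characteristic), and it is the Gruson--Peskine proof --- a direct analysis of the numerical character of the plane section, not passing through monodromy --- that covers positive characteristic, where both reducedness of the general section and transitivity of monodromy can fail (Rathmann's examples). So your proof, once the multisection point is nailed down, establishes the characteristic-zero case only; for the full statement you would have to fall back on \cite{GP}, exactly as the paper does.
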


\begin{proof} The result was proved by Gruson and Peskine in \cite{GP}. The result also follows (in characteristic zero) from the theorem of Harris that $Z$ has the Uniform position property \cite{Ha}.
\end{proof}

The result we will use most in the sequel is what happens in the case of curves whose general  hyperplane section has an $h$-vector not of decreasing type, that is to say, a biliaison type with a gap.

\begin{theorem} (Decomposition Theorem) (Char$(k)=0$). \label{decompThm} Let $C$ be a (locally Cohen-Macaulay) curve in $\PP^3$, and suppose that its general hyperplane section $Z$ has a  biliaison type $\lambda _Z=\{k_1,k_2,\cdots ,k_s\}$ with a gap at $t$, so that $k_{t+1}\ge k_t+3$. Then $C$ has a subcurve $D$ whose general hyperplane section $Z''$ has $\lambda _{Z''}=\{k_{t+1},\cdots ,k_s\}$. The residual curve $B$ of $D$ in $C$ then has general hyperplane section $Z'$ with $\lambda _{Z'}=\{k_{1},\cdots ,k_t\}$.
\end{theorem}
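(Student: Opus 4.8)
The plan is to reduce the statement to the corresponding decomposition of the hyperplane section (Theorem \ref{davis}), via a degeneration of the curve $C$ to its hyperplane section, and then use semicontinuity to lift the decomposition back to $C$. More precisely, I would argue as follows. First, fix a general hyperplane $H$ and consider the flat family obtained by degenerating $C$ to the cone over $Z=C\cap H$ (or, more carefully, to a curve supported on $H$ whose hyperplane section is again $Z$); this is the standard ``Hartshorne--Hirschowitz'' style degeneration that preserves the Hilbert function, hence the $h$-vector/biliaison type, of the hyperplane section. In the special fibre we are in the situation of Theorem \ref{davis}: there the hyperplane section $Z$ itself has biliaison type with a gap at $t$, and $Z$ splits as $Z=Z'\cup Z''$ with $\lambda_{Z''}=\{k_{t+1},\dots,k_s\}$ and $\lambda_{Z'}=\{k_1,\dots,k_t\}$. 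The key point to extract from this is that the ideal sheaf $\mathcal I_C$ has ``enough'' sections in low degrees: the gap forces the minimal generators of the hyperplane section's ideal to split into a low-degree part (generators in degrees $\le t$, cutting out $Z''$) and a high-degree part.

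The technical heart is then to show that the forms of degree $\le t$ in $I_C$ already define a (one-dimensional, locally Cohen-Macaulay) subscheme $D\subset C$ whose general hyperplane section is exactly $Z''$. Here I would work with the saturated ideal: let $J$ be the saturation of the ideal generated by $I_{C,\le t}$, let $D=\Proj(R/J)$, and let $B$ be the curve residual to $D$ in $C$ (defined by the ideal quotient $(I_C : J)$). One needs (i) that $D$ is pure one-dimensional --- i.e. that $I_{C,\le t}$ has no embedded or zero-dimensional stuff in its saturation, which follows because restricting to the general $H$ recovers $Z''$, which is one-dimensional-after-cone and has the right Hilbert function; and (ii) that the hyperplane section of $D$ is $Z''$ and that of $B$ is $Z'$. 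Step (ii) is where the gap hypothesis $k_{t+1}\ge k_t+3$ is essential: it guarantees, via the numerical character computation in Theorem \ref{davis} together with a dimension count on $H^0(\mathcal I_{D}(t))$ vs. $H^0(\mathcal I_{C}(t))$, that no forms of degree $\le t$ are ``lost'' on passing to the general hyperplane, so $(I_D)_{\le t}$ restricts onto $(I_{Z''})_{\le t}$, and the latter already generates $I_{Z''}$ up to saturation because $Z''$ has initial degree $k_{t+1}-\ell$-type bounds well inside the range $\le t$. The characteristic-zero hypothesis enters through the uniform position / generic smoothness statements needed to know that ``general hyperplane section'' behaves well and that the degeneration can be taken flat with reduced generic fibre behaviour controlled; this is why the theorem is only asserted in char $0$.

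Finally I would assemble the conclusion: having produced $D$ with $\lambda_{Z''}=\{k_{t+1},\dots,k_s\}$, the residual $B=\mathrm{res}_C(D)$ automatically has hyperplane section $Z'$ residual to $Z''$ in $Z$, because hyperplane section commutes with residuation for a general $H$ (the general $H$ meets neither $D$, $B$, nor their intersection improperly), and by Theorem \ref{davis} that residual is exactly $Z'$ with $\lambda_{Z'}=\{k_1,\dots,k_t\}$.

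The step I expect to be the main obstacle is (ii) above: controlling the cohomology of $\mathcal I_D$ and $\mathcal I_B$ in the critical degree range around $t$ well enough to be sure that the decomposition of the \emph{hyperplane section} actually lifts --- a priori the subscheme cut out by low-degree forms of $C$ could be ``too big'' (contain a thickening) or ``too small'' (the generators of $I_{Z''}$ might not all lift to degree $\le t$ forms on $C$). Handling this cleanly is likely to require either the explicit Decomposition Theorem machinery for hyperplane sections of arbitrary curves (the non-ACM analogue of Davis's argument, presumably already in the literature of Bolondi--Migliore or Nollet) or a Rao-module / $N$-type-resolution argument showing the relevant Ext or $H^1$ groups vanish in the gap range.
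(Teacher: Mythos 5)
Your overall strategy is the right one, and it is in fact the strategy of the proof the paper cites: the paper does not give an independent argument but refers to Strano's corrected proof, which applies Davis's decomposition (Theorem \ref{davis}) to the general hyperplane section $Z$ and then lifts the resulting splitting $Z=Z'\cup Z''$ to a splitting of $C$ in $\PP^3$ by means of Cook's Proposition 10, a lifting result attributed to Green. Two remarks, one cosmetic and one substantive.

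The cosmetic one: the degeneration of $C$ to the cone over $Z$ is an unnecessary detour. Davis's theorem applies directly to the zero-scheme $Z\subset\PP^2$ (this is the setting of his original paper), so you get the decomposition $Z=Z'\cup Z''$ with the stated biliaison types immediately, with no flat family and no semicontinuity needed. Introducing the cone also creates problems you would then have to solve (the cone over $Z$ is not a locally Cohen--Macaulay curve, and controlling the hyperplane section through the degeneration is delicate), all to reach a statement that is already available.

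The substantive one: your step (ii) --- showing that the low-degree part of $I_C$ cuts out a genuine subcurve $D$ whose general hyperplane section is exactly $Z''$, equivalently that the plane curve of degree $s-t$ containing $Z''$ lifts to a surface of degree $s-t$ containing a subcurve of $C$ --- is not a technical nuisance to be deferred; it is the entire content of the theorem beyond Davis, and it is exactly the point where Strano's original argument was wrong and had to be repaired. You correctly identify it as the main obstacle, but you do not supply an argument for it, only a guess that it is ``presumably already in the literature'' or accessible by a Rao-module computation. The paper closes this gap by invoking a specific result (Cook, Proposition 10, with Green's proof), which is a lifting theorem in the spirit of Laudal's lemma and is where the characteristic-zero and general-position hypotheses are actually consumed. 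As written, your proposal reduces the theorem to an unproved lifting statement, so it is not yet a proof; it becomes one once you either prove that lifting statement or cite it precisely.
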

\begin{proof} This result is stated by Beorchia in \cite[Lemma 1.7]{B},  in the language of the numerical character. For the proof she refers to Strano \cite[Lemma 2]{S0}. A later paper of Strano \cite{S} states that his earlier proof of Lemma 2 was incorrect. He gives a  new proof using Davis's result (Theorem \ref{davis})  for the general hyperplane section $Z$ of $C$, then lifting the decomposition to $\PP^3$ using a result of Cook \cite[Proposition 10]{C}, whose proof is attributed to Green \cite{G} (see also \cite{C}).
\end{proof}

\begin{corollary} \label{lambdawithgap} (Char$(k)=0$) Let $C_1$ and $C_2$ be integral ACM curves in $\PP^3$, and let $C$ be the union $C_1\cup C_2$ (not necessarily ACM). Suppose that the biliaison type $\lambda _Z=\{k_1,k_2,\cdots ,k_s\}$ of the general hyperplane section $Z$ of $C$ has  a gap at $t$. Then (in one order or the other) $\lambda _{C_1}=\{k_{1},\cdots ,k_t\}$, $\lambda _{C_2}=\{k_{t+1},\cdots ,k_s\}$, and $s(C)=s(C_1)+s(C_2)$.
\end{corollary}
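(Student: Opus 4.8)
The plan is to deduce everything from the Decomposition Theorem (Theorem~\ref{decompThm}), applied to the curve $C=C_1\cup C_2$. Its hypotheses are in force: $C$ is reduced, hence locally Cohen--Macaulay, and we are assuming $\mathrm{char}(k)=0$. Since the general hyperplane section $Z$ has a gap at $t$, Theorem~\ref{decompThm} produces a subcurve $D\subseteq C$ whose general hyperplane section $Z''$ has $\lambda_{Z''}=\{k_{t+1},\dots,k_s\}$, with residual curve $B$ of $D$ in $C$ having general hyperplane section $Z'$ with $\lambda_{Z'}=\{k_{1},\dots,k_t\}$.

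The main point is to identify the pair $\{B,D\}$ with $\{C_1,C_2\}$. Since $C_1$ and $C_2$ are integral and distinct, $C$ is reduced, so the subcurves $B$ and $D$ are reduced as well; being reduced closed subschemes of pure dimension one inside $C$, each of them is a union of some of the two irreducible components $C_1,C_2$ of $C$. Both $B$ and $D$ are genuine (nonempty, one--dimensional) curves — indeed $\lambda_{Z'}$ and $\lambda_{Z''}$ are nonempty — and $B$ is residual to $D$ in $C$, so $D$ can be neither empty nor all of $C$ (in the latter case $B$ would be empty). Hence $D$ is one of $C_1, C_2$ and $B$ is the other; relabeling if necessary, say $D=C_2$ and $B=C_1$. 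This is where I expect the only genuine care to be needed: one must be sure the abstract "subcurve" and "residual curve" supplied by Theorem~\ref{decompThm} are literally scheme-theoretic components of $C$, which is exactly what reducedness of $C$ gives.

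Next I invoke the fact recalled in \S3 that an ACM curve has the same $h$-vector — hence the same biliaison type — as its general plane section. As $C_1$ and $C_2$ are ACM, this yields $\lambda_{C_1}=\lambda_{Z'}=\{k_1,\dots,k_t\}$ and $\lambda_{C_2}=\lambda_{Z''}=\{k_{t+1},\dots,k_s\}$, which is the first assertion (in one order or the other). In particular, by Remark~\ref{d-g-lambda}, $s(C_1)=\length(\lambda_{C_1})=t$ and $s(C_2)=\length(\lambda_{C_2})=s-t$, so $s(C_1)+s(C_2)=s$.

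It remains to show $s(C)=s$. For the upper bound, choose surfaces $S_1\supseteq C_1$ and $S_2\supseteq C_2$ of degrees $t$ and $s-t$; then $S_1\cup S_2$ is a surface of degree $s$ containing $C$, so $s(C)\le s$. For the lower bound, restricting to a general plane $H$ with equation $h$ gives, since $h$ is a nonzerodivisor on $R_C$, an injection $I_{C,\ell-1}\xrightarrow{\cdot h} I_{C,\ell}$ with cokernel mapping into $I_{Z,\ell}$; hence $I_{C,\ell}\ne 0$ together with $I_{C,\ell-1}=0$ forces $I_{Z,\ell}\ne 0$. Applying this at $\ell=s(C)$ gives $s(Z)\le s(C)$, and $s(Z)=\length(\lambda_Z)=s$. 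Therefore $s(C)=s=s(C_1)+s(C_2)$, completing the proof. Everything after the identification step is routine bookkeeping with the degree/genus/initial-degree dictionary of \S3 and the standard behavior of the initial degree under a general plane section.
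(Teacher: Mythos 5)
Your proof is correct and follows essentially the same route as the paper's: apply the Decomposition Theorem, identify $\{B,D\}$ with $\{C_1,C_2\}$ using that $C$ is the reduced union of two irreducible components, read off $s(C_1)=t$ and $s(C_2)=s-t$ from the biliaison types, and conclude $s(C)=s$ from the containment of $C$ in the union of the two surfaces. You merely spell out a few steps the paper leaves implicit (the reducedness argument for the identification, and the inequality $s(Z)\le s(C)$), which is fine.
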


\begin{proof} According to the Decomposition Theorem, $C$ contains a subcurve $D$ and a residual curve $B$. Since $C$ is the union of two distinct irreducible ACM curves, we must have $C_1=B$, $C_2=D$ in one order or the other. Then $s(C_1)=t$, $s(C_2)=s-t$ and the initial degree $s(C)$ of $C$, which a priori maybe greater than $s(Z)=s$, is equal to $s$, because $C_1$ and $C_2$ are contained in surfaces of degrees $t$, $s-t$, respectively. Therefore, $C=C_1\cup C_2$ is contained in their union, a surface of degree $s$.
\end{proof}


\section{Bounds on the genus of reducible curves}

In this  section we derive some bounds  on the genus of space curves, generalizing the well known results  for integral curves. From these bounds we can then derive  bounds on the maximum number of intersection points of two ACM curves in $\PP^3$. Because of Lemma \ref{genus-union}, to bound the intersection number $\# (C_1\cap C_2)$ of two curves, it is equivalent to bound the genus of their union, $p_a(C_1\cup C_2)$. Therefore we will state results whichever way is most convenient.

\begin{lemma}\label{firstbound}  Let $C\subset \PP^3$ be a (locally CM) curve with Rao module $M=\oplus _{\ell }\Hl ^1(\PP^3,{\mathcal I}_C(\ell ))$ and  let $Z$ be its general plane section with $h$-vector $h_Z$. Let $C'\subset \PP^3$ be an ACM curve  with $h$-vector $h_{C'}=h_Z$ and let $N:=\Ker (M\stackrel{\times h}{\longrightarrow} M(1))$ where $h$ is a general linear form. Then $$ p_a(C) =p_a(C')-\lambda (N)$$ where $\lambda (N)$ is the length of $N$. In particular, $ p_a(C) \le p_a(C')$, with equality  if and only if $C$ is  ACM.
\end{lemma}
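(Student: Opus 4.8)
The plan is to compute the arithmetic genus of $C$ via the Hilbert polynomial / Hilbert function of its ideal sheaf, and compare term by term with the corresponding computation for the ACM curve $C'$ having the same general plane section. Recall that for any curve $C \subset \PP^3$ we have $p_a(C) = 1 - \chi(\mathcal{O}_C)$, and $\chi(\mathcal{O}_C)$ is read off from the Hilbert polynomial of $R_C = k[x_0,\ldots,x_3]/I_C$; on the other hand the failure of $C$ to be ACM is measured precisely by the Rao module $M = \oplus_\ell \Hl^1(\PP^3, \mathcal{I}_C(\ell))$, which vanishes iff $C$ is ACM. So the first step is to set up the exact relation between the Hilbert function of $I_C$ (hence of $R_C$) and that of $I_{C'}$, with the discrepancy expressed in terms of $\dim_k M_\ell$.

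First I would take a general linear form $h$ (the hyperplane defining the plane section $Z = C \cap H$), and use the exact sequence $0 \to \mathcal{I}_C(\ell-1) \xrightarrow{\times h} \mathcal{I}_C(\ell) \to \mathcal{I}_{Z,\PP^2}(\ell) \to 0$. Taking cohomology, and using that $\Hl^1(\PP^2, \mathcal{I}_{Z}(\ell)) = 0$ for a $0$-scheme in $\PP^2$ and that $\Hl^2(\PP^3,\mathcal{I}_C(\ell))$ stabilizes, one gets that $\partial \Hl_{I_C}(\ell) := \Hl^0(\mathcal{I}_C(\ell)) - \Hl^0(\mathcal{I}_C(\ell-1))$ differs from $\Hl^0(\mathcal{I}_{Z}(\ell))$ exactly by the rank of the connecting map, i.e. by $\dim_k \Ker(M_{\ell-1} \xrightarrow{\times h} M_\ell) = \dim_k N_{\ell-1}$. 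Summing over $\ell$, the total Hilbert function $\Hl_C(\ell) = \binom{\ell+3}{3} - \Hl^0(\mathcal{I}_C(\ell))$ of $R_C$ and the Hilbert function $\Hl_{C'}(\ell)$ of the ACM curve $C'$ (which is the ``$h$-vector integrated twice'', and depends only on $h_Z = h_{C'}$) differ in the range of $\ell$ by $-\sum_{j < \ell}\dim_k N_j$. Since $N$ is a finite-length module, for $\ell \gg 0$ this correction stabilizes to $-\lambda(N)$; that is, the Hilbert polynomials of $C$ and $C'$ differ by the constant $-\lambda(N)$. Comparing constant terms (the degrees agree because the general plane sections agree), this gives $\chi(\mathcal{O}_C) = \chi(\mathcal{O}_{C'}) + \lambda(N)$, i.e. $p_a(C) = p_a(C') - \lambda(N)$.

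The final ``in particular'' then follows: $\lambda(N) \ge 0$ always, so $p_a(C) \le p_a(C')$; and $\lambda(N) = 0$ iff $N = 0$ iff multiplication by a general linear form on $M$ is injective in every degree, which for a finite-length graded module forces $M = 0$, i.e. $C$ is ACM. (Conversely if $C$ is ACM then $M = 0$ and one may take $C' = C$, so $C' = C$ up to the choice in the hypothesis and $p_a(C) = p_a(C')$, consistent with $\lambda(N) = 0$.)

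I expect the main obstacle to be the careful bookkeeping in the first paragraph: keeping track of the $\Hl^2$ term (which is nonzero but eventually constant, contributing to the degree rather than disturbing the genus comparison) and making sure the telescoping of $\partial \Hl$ over $\ell$ is done with the right index conventions so that the stabilized correction is exactly $\lambda(N)$ and not off by a shift. Everything else — the vanishing of $\Hl^1$ for plane $0$-schemes, the identification of the ACM Hilbert function with the double sum of $h_Z$, and the structure statement that a finite-length graded module with a non-zero-divisor of degree one is zero — is standard and can be quoted.
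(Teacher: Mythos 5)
Your proposal is correct and follows essentially the same route as the paper: the same exact sequence $0 \to \mathcal{I}_C(n-1) \to \mathcal{I}_C(n) \to \mathcal{I}_{Z,\PP^2}(n) \to 0$, the identification of the discrepancy in each degree with $\dim_k N_{n-1}$ via the connecting map, integration of the postulation/Hilbert functions to pick up the constant $\lambda(N)$, and the finite-length argument that $N=0$ forces $M=0$. (Minor remark: you do not actually need any vanishing of $\Hl^1(\PP^2,\mathcal{I}_Z(\ell))$ or control of $\Hl^2$ — exactness alone shows the connecting map surjects onto $N_{n-1}=\Ker(M_{n-1}\to M_n)$, which is all the bookkeeping requires.)
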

\begin{proof}We consider the exact sequence
$$0 \longrightarrow {\mathcal I}_C(n-1)\longrightarrow {\mathcal I}_C(n)\longrightarrow {\mathcal I}_{Z,\PP^2}(n)\longrightarrow 0.$$
Taking cohomology we get
$$0 \longrightarrow \Hl^0(\PP^3,{\mathcal I}_C(n-1))\longrightarrow \Hl^0(\PP^3,{\mathcal I}_C(n))\longrightarrow \Hl^0(\PP^2,{\mathcal I}_{Z,\PP^2}(n))\longrightarrow
 N_{n-1} \longrightarrow 0.$$
 Hence we have $$\partial h^0({\mathcal I}_C(n))=h^0({\mathcal I}_Z(n))-\lambda (N_{n-1}).$$
 Writing the postulation functions $$\begin{array}{rcl} \psi_C(n) & = & h^0({\mathcal O}_{\PP^3}(n))- h^0({\mathcal I}_C(n)) \\
 \psi_Z(n) & = & h^0({\mathcal O}_{\PP^2}(n))-h^0({\mathcal I}_{Z,\PP^2}(n)),
 \end{array}$$ we see
 $$ \partial  \psi_C(n)  = \psi_Z(n)+\lambda (N_{n-1}).$$
 But $C'\subset \PP^3$ is an ACM curve with $h$-vector $h_{C'}=h_Z$, so $\partial  \psi_{C'}(n)  = \psi_Z(n)$. Integrating for $n\gg 0$, we obtain
 $$   \psi_C(n)  = \psi_{C'}(n)+\lambda (N).$$
 Since for $n\gg 0$, we have  $\psi_C(n)=h^0({\mathcal O}_C(n))=\deg(C)n+1-p_a(C)$ and  $\psi_{C'}(n)=h^0({\mathcal O}_{C'}(n))=\deg(C')n+1-p_a(C')$, we get
 $$\deg(C)n+1-p_a(C)=\deg(C')n+1-p_a(C')+\lambda (N)$$ which together with the equality $\deg(C)=\deg(C')$ implies $$ p_a(C) =p_a(C')-\lambda (N).$$
 Finally, we observe that since $M$ is of finite length, we have $C$ is ACM if and only if $M=0$ if and only if $N=0$ if and only if $ p_a(C) =p_a(C')$.
 \end{proof}

 \begin{definition} \rm
Given integers $d$ and $s$, we define $G_{CM}(d, s)$ the maximum genus of an integral ACM
curve $C\subset \PP^3$ of degree $d$ not lying on a surface of degree $s-1$, if such curves exist, and 0 otherwise.
\end{definition}

\begin{remark} \label{remG_CM} \rm
Note by definition that for  $d$ fixed,  $G_{CM}(d,s)$ is a non-increasing function of $s$.
Given the formulas for $s$, $d$ and $g$ in terms of the $h$-vector (Proposition \ref{deg-genus}), it is a purely combinatorial task (valid in any characteristic) to compute the values of $G_{CM}(d,s)$ for all $d$, $s$. This has been done in \cite[Theorem 2.7]{GP}. There is one formula for the case $d>s(s-1)$ and another for the case $\frac{1}{2}s(s+1)\le d \le s(s-1)$. If $d<\frac{1}{2}s(s+1)$ there are no such ACM curves.
To find $G_{CM}(d,s)$ one must write an $h$-vector of decreasing type $$h: \ 1 \ 2 \ \cdots \ s \ a_1 \ a_2 \ \cdots \ a_r $$ of degree $d$. Since the  higher $a_j$ carry more height in the genus formula, one tries to make them as large as possible for higher $j$. Thus $ 1 \ 2 \ 3 \ 4 \ 3 $ and $1 \ 2 \ 3 \ 4 \ 2 \ 1$ both have $d=13$ and $s=4$ but the latter has he maximal genus.
\end{remark}

Now we can state our main theorem
\begin{theorem}\label{mainthm} (Char$(k)=0$) Let $C_1,C_2\subset \PP^3$ be  integral ACM curves of degrees $d_1$, $d_2$ and initial degrees $s_1$, $s_2$. Let $C=C_1\cup C_2$ and let $Z$ be a general hyperplane section of $C$.
\begin{itemize}
\item[(a)] If the biliaison type  $\lambda _Z$ of $C$ has no gaps, then
$$p_a(C)\le G_{CM}(d_1+d_2, max\{s_1,s_2\} ).$$
\item[(b)] If the biliaison type  $\lambda _Z$ of $C$ has  a gap, or if $s(C)=s_1+s_2$, then
$$\# (C_1\cap C_2)\le
min\{ d_1s_2,d_2s_1 \}.$$
\end{itemize}
Furthermore, in each case, if equality holds, then $C$ is ACM.
\end{theorem}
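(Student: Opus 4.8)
The plan is to prove (a) and (b) separately, using Lemma~\ref{firstbound} as the principal tool in (a) and a Mayer--Vietoris argument in (b), and to dispatch the two equality statements at the end of each part.

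For part (a): since $\lambda_Z$ has no gaps, the $h$-vector $h_Z$ is of decreasing type (Remark~\ref{d-g-lambda}), so Theorem~\ref{h-vectorACM}(b) produces a smooth, hence integral, ACM curve $C'\subset\PP^3$ with $h_{C'}=h_Z$. By Proposition~\ref{deg-genus} this curve has degree $\deg Z=d_1+d_2$ and initial degree $s(Z)$. The key numerical point is that $s(Z)\ge\max\{s_1,s_2\}$: the general plane section $Z_i$ of $C_i$ satisfies $h_{Z_i}=h_{C_i}$ (as $C_i$ is ACM), so $s(Z_i)=s_i$, and the inclusions $Z_i\subseteq Z$ of $0$-schemes in $\PP^2$ force $s(Z)\ge s(Z_i)$. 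Hence, by the definition of $G_{CM}$ and its monotonicity in $s$ (Remark~\ref{remG_CM}), $p_a(C')\le G_{CM}(d_1+d_2,s(Z))\le G_{CM}(d_1+d_2,\max\{s_1,s_2\})$, while Lemma~\ref{firstbound} gives $p_a(C)\le p_a(C')$. If equality holds in the conclusion of (a), then all of these inequalities are equalities; in particular $p_a(C)=p_a(C')$, so $C$ is ACM by Lemma~\ref{firstbound}.

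For part (b): in both cases I would first reduce to the equality $s(C)=s_1+s_2$ --- this is part of Corollary~\ref{lambdawithgap} when $\lambda_Z$ has a gap, and it is the standing hypothesis otherwise. Fix a surface $S_1=V(f_1)$ of degree $s_1$ containing $C_1$. Since $s(C)=s_1+s_2>s_1$, the curve $C_2$ cannot lie on $S_1$ (else $C=C_1\cup C_2\subseteq S_1$), so $S_1\cap C_2$ is the zero-dimensional divisor of $f_1|_{C_2}$ on $C_2$, of length $s_1d_2$ by Bezout; as $f_1\in I_{C_1}$ we get $C_1\cap C_2\subseteq S_1\cap C_2$ as schemes, whence $\#(C_1\cap C_2)\le s_1d_2$, and symmetrically $\le s_2d_1$, giving the bound. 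For the equality statement, suppose $\#(C_1\cap C_2)=\min\{d_1s_2,d_2s_1\}$ and (interchanging $C_1,C_2$ if necessary) that this minimum is $d_2s_1$. Then $C_1\cap C_2\subseteq S_1\cap C_2$ are zero-dimensional of equal length, hence equal; write $H_1:=C_1\cap C_2=S_1\cap C_2$, an effective Cartier divisor on $C_2$ with $\mathcal{I}_{H_1/C_2}\cong\mathcal{O}_{C_2}(-s_1)$ via multiplication by $f_1|_{C_2}$. To show $C$ is ACM it suffices that $\Hl^0(\mathcal{O}_{\PP^3}(n))\to\Hl^0(\mathcal{O}_C(n))$ is surjective for all $n$. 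Using the exact sequence $0\to\mathcal{O}_C\to\mathcal{O}_{C_1}\oplus\mathcal{O}_{C_2}\to\mathcal{O}_{H_1}\to 0$, a section of $\mathcal{O}_C(n)$ is a pair $(\sigma_1,\sigma_2)$ with $\sigma_1|_{H_1}=\sigma_2|_{H_1}$; lifting $\sigma_1$ to $\PP^3$ via the ACM-ness of $C_1$ reduces us to pairs $(0,\sigma_2)$ with $\sigma_2$ vanishing on $H_1$, so $\sigma_2=f_1|_{C_2}\cdot\tau$ with $\tau\in\Hl^0(\mathcal{O}_{C_2}(n-s_1))$; lifting $\tau$ via the ACM-ness of $C_2$ exhibits $(0,\sigma_2)$ as the restriction of $f_1g'$, which proves the surjectivity, so $C$ is ACM.

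The step I expect to be the main obstacle is the surjectivity argument just sketched: the whole mechanism is that the equality hypothesis is exactly what forces $C_1\cap C_2$ to equal the honest surface section $S_1\cap C_2$, so that the residual $C_2$-component of a section of $\mathcal{O}_C$ is automatically divisible by the equation $f_1$ of $S_1$ and the ACM-ness of $C_2$ can finish the lift; one also has to keep track of which of $d_1s_2$, $d_2s_1$ is the minimum, though the two cases are symmetric under swapping $C_1\leftrightarrow C_2$. In the subcase where $\lambda_Z$ has a gap there is a shortcut for ``equality $\Rightarrow$ ACM'': by Lemma~\ref{firstbound} it reduces to the identity $p_a(C')=p_a(C_1)+p_a(C_2)+\min\{d_1s_2,d_2s_1\}-1$ for the ACM curve $C'$ with $h$-vector $h_Z$, which follows from Theorem~\ref{davis}. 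Finally, part (a) and the bound in (b) are characteristic-free; characteristic zero is used only to invoke Corollary~\ref{lambdawithgap} (hence the Decomposition Theorem) in the gap subcase of (b).
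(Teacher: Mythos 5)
Your proposal is correct and follows essentially the same route as the paper: part (a) via Lemma \ref{firstbound} and the monotonicity of $G_{CM}$, part (b) via Corollary \ref{lambdawithgap} to reduce to $s(C)=s_1+s_2$ and then Bezout on $S_i\cap C_j$. Your section-lifting argument for ``equality $\Rightarrow$ ACM'' using $0\to\mathcal{O}_C\to\mathcal{O}_{C_1}\oplus\mathcal{O}_{C_2}\to\mathcal{O}_T\to0$ is just a reformulation of the paper's Lemma \ref{ACM} (which runs the same computation through a diagram of ideal sheaves), resting on the identical key point that equality forces $T=S_i\cap C_j$ and hence $\mathcal{I}_{T,C_j}\cong\mathcal{O}_{C_j}(-s_i)$.
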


\begin{proof} (a) Let $C'$ be an ACM curve in $\PP^3$ with  biliaison type  $\lambda _{C'}=\lambda _Z$. Since $\lambda _Z$ has no gaps, we can take $C'$ to be an integral (even smooth) ACM curve in $\PP^3$ by Theorem \ref{h-vectorACM}. The initial degree $s'$ of $C'$ is the same as for $Z$, and $Z$ is the union of the hyperplane sections $Z_1$ and $Z_2$ of $C_1$ and $C_2$. Moreover, the initial degrees of $Z_1$ and $Z_2$ are $s_1$ and $s_2$ since $C_1$ and $C_2$ are ACM curves. Therefore, $s'\ge max\{s_1,s_2\}$. By definition, $p_a(C')\le G_{CM}(d_1+d_2,s')$ and since $G_{CM}(d,s)$ is a decreasing function of $s$ for $d$ fixed, we conclude  that  $p_a(C')\le G_{CM}(d_1+d_2,max\{s_1,s_2\} )$. Now, $p_a(C)\le p_a(C')$ by Lemma \ref{firstbound}, and this proves (a).

(b) If $\lambda _Z$ of $C$ has  a gap, we first apply Corollary \ref{lambdawithgap}  which tells us that (in one order or the other) $\lambda _{C_1}=\{k_{1},\cdots ,k_t\}$, $\lambda _{C_2}=\{k_{t+1},\cdots ,k_s\}$, $\lambda _{Z}=\{k_{1},\cdots ,k_s\}$ and there is a gap at $t$, namely $k_{t+1}\ge k_t+3$. Since $C_1$ and $C_2$ are  irreducible ACM curves, neither $\lambda _{C_1}$ nor $\lambda _{C_2}$ has a gap. In particular, $s(C)=s_1+s_2$.

Now assuming $s(C)=s_1+s_2$, let $S_1$ and $S_2$ be surfaces of degrees $s_1$ and $s_2$ containing $C_1$ and $C_2$ respectively. Then $C_1\cup C_2$ is contained in $S_1\cup S_2$, but $C_1$ is not contained in $S_2$, so $\#(C_1\cap C_2)\le \#(C_1\cap S_2)=d_1s_2$. Similarly $C_2$ is not contained in $S_1$, so $\#(C_1\cap C_2)\le \#(S_1\cap C_2)=d_2s_1$. Therefore, $\#(C_1\cap C_2)\le min(d_1s_2,d_2s_1)$.

To prove the last statement, if there is equality in (a), then $p_a(C)=p_a(C')$, and this implies that $C$ is ACM by Lemma \ref{firstbound}. If there is equality in (b), then  $C$ is ACM by the following Lemma \ref{ACM}.
\end{proof}

\begin{lemma} \label{ACM} Let $C_1,C_2\subset \PP^3$ be   ACM curves of degrees $d_1$, $d_2$ contained in surfaces $S_1$ and $S_2$ of degrees $s_1$ and $s_2$ such that $C_1\nsubseteq S_2$ and $C_2\nsubseteq S_1$. Assume $\#(C_1\cap C_2)= min(d_1s_2,d_2s_1)$. Then, $C_1\cup C_2$ is an ACM curve.
\end{lemma}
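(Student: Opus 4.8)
The plan is to establish the result by a Hilbert-function comparison, showing that the union $C = C_1 \cup C_2$ has the Hilbert function of an ACM curve, which for a locally Cohen-Macaulay curve in $\PP^3$ is equivalent to being ACM (the Rao module vanishes). Without loss of generality assume $d_1 s_2 \le d_2 s_1$, so that the hypothesis reads $\#(C_1 \cap C_2) = d_1 s_2$. Since $C_1 \nsubseteq S_2$, Bezout gives $\#(C_1 \cap S_2) = d_1 s_2$, and the inclusion $C_1 \cap C_2 \subseteq C_1 \cap S_2$ of zero-dimensional schemes together with the equality of lengths forces the scheme-theoretic equality $C_1 \cap C_2 = C_1 \cap S_2$. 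In particular the intersection is ``cut out on $C_1$ by $S_2$,'' which is the geometric input we exploit.

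The key step is then to compute the postulation of $C$. First I would use the Mayer--Vietoris sequence
$$ 0 \longrightarrow \mathcal{I}_C \longrightarrow \mathcal{I}_{C_1} \oplus \mathcal{I}_{C_2} \longrightarrow \mathcal{I}_{C_1 \cap C_2} \longrightarrow 0, $$
where on the right we have the ideal sheaf of the zero-dimensional scheme $T = C_1 \cap C_2$ inside $C_1$ (or equivalently inside $\PP^3$, since $\mathcal{O}_{C_1}/\mathcal{I}_{C_2}\mathcal{O}_{C_1} = \mathcal{O}_T$). Because $T = C_1 \cap S_2$ is cut out on the ACM curve $C_1$ by the single surface $S_2$ of degree $s_2$, there is an exact sequence $0 \to \mathcal{O}_{C_1}(n-s_2) \to \mathcal{O}_{C_1}(n) \to \mathcal{O}_T(n) \to 0$; since $C_1$ is ACM, $H^1(\mathcal{O}_{C_1}(n-s_2))$ governs $H^1(\mathcal{O}_T(n))$, but more to the point the map $H^0(\mathcal{O}_{C_1}(n)) \to H^0(\mathcal{O}_T(n))$ is surjective for all $n \ge s_2$ and its behavior for small $n$ is completely controlled by the $h$-vector of $C_1$. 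Combining this with the analogous surjectivity for $C_2$ (and the ACM-ness of both $C_i$), I would chase the long exact cohomology sequence of Mayer--Vietoris to show $H^1(\mathcal{I}_C(n)) = 0$ for all $n$: the connecting map $H^0(\mathcal{I}_{C_1\cap C_2}(n)) \to H^1(\mathcal{I}_C(n))$ needs to be shown zero, which amounts to surjectivity of $H^0(\mathcal{I}_{C_1}(n)) \oplus H^0(\mathcal{I}_{C_2}(n)) \to H^0(\mathcal{I}_T(n))$ for all $n$. This surjectivity is the heart of the matter, and it follows because $S_2$ contributes, via multiplication, all sections of $\mathcal{I}_T$ that vanish on $C_1$, while $\mathcal{I}_{C_1}$ supplies the rest; concretely, a section of $\mathcal{I}_T(n)$ pulls back to a section of $\mathcal{I}_{T, C_1}(n) = \mathcal{I}_{C_1 \cap S_2, C_1}(n)$, which by the displayed sequence on $C_1$ lifts (for $n \ge s_2$, and trivially for $n < s_2$ since then $\mathcal{I}_{T,C_1}(n)$ may already be forced) to something of the form ``section of $\mathcal{I}_{C_1}$ plus a multiple of the equation of $S_2$.''

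The main obstacle is handling the range of small $n$, $s_2 \le n < s_2 + (\text{something})$, where one cannot simply invoke Castelnuovo--Mumford regularity of $C_1$ to get the needed surjectivity for free; here one must use the precise hypothesis $\#(C_1 \cap C_2) = d_1 s_2$ (the \emph{maximal} possible intersection), not merely $T \subseteq C_1 \cap S_2$. The equality of lengths is exactly what makes $T = C_1 \cap S_2$ a ``complete intersection on the curve $C_1$,'' and this is what forces the postulation of $C$ to equal that of the hypothetical ACM curve $C'$ with $h$-vector $h_Z$ appearing in Lemma \ref{firstbound}; once $p_a(C) = p_a(C')$ is verified by summing up the Hilbert-function computation, Lemma \ref{firstbound} immediately yields that $C$ is ACM. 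An alternative, possibly cleaner, route that I would keep in reserve is to realize $C$ directly as a biliaison: since $T$ is cut out on $C_1$ by $S_2$, the curve $C = C_1 \cup C_2$ should be obtainable from $C_2$ by an elementary biliaison of height $s_1$ on some ACM surface containing $C_2$ (or from $C_1$ by one of height $s_2$), mirroring the mechanism in the proof of Theorem \ref{ci}(b)--(d); if that surface can be produced, ACM-ness of $C$ is inherited from that of $C_2$ plus the biliaison, with no cohomology chase at all.
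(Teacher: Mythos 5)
Your plan is correct and is essentially the paper's own argument: both hinge on the observation that the length equality forces $T=C_1\cap S_2$ scheme-theoretically, hence $\mathcal{I}_{T,C_1}\cong\mathcal{O}_{C_1}(-s_2)$, and then on the surjectivity of $\Hl^0(\mathcal{O}_{\PP^3}(m-s_2))\to\Hl^0(\mathcal{O}_{C_1}(m-s_2))$ (from $C_1$ ACM) together with $\Hl^1(\mathcal{I}_{C_2}(m))=0$ (from $C_2$ ACM); your Mayer--Vietoris packaging is just a cosmetic variant of the paper's residual sequence $0\to\mathcal{I}_{C_1\cup C_2}\to\mathcal{I}_{C_2}\to\mathcal{I}_{C_2,C_1\cup C_2}\to 0$. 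The ``obstacle at small $n$'' you flag is not actually there, since ACM-ness of $C_1$ gives the needed surjectivity in every degree, so no separate argument or appeal to Lemma \ref{firstbound} is required.
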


\begin{proof}  Interchanging indices if necessary, we may assume $\#(C_1\cap C_2)= d_1s_2$. Then clearly the intersection scheme $T=C_1\cap C_2$ is equal to $C_1\cap S_2$, so the ideal sheaf ${\mathcal I}_{T,C_1}\cong {\mathcal O}_{C_1}(-s_2)$. To show that $C_1\cup C_2$ is ACM, it will be sufficient to show that $\Hl^1(\PP^3, {\mathcal I}_{C_1\cup C_2}(m))=0$ for all $m\in \ZZ$. We consider the diagram of sheaves, for any $m\in \ZZ$,

$$
\begin{array}{cccccccccccccc}
 0 & \longrightarrow & {\mathcal I}_{C_1\cup C_2}(m)  & \stackrel{\alpha }{ \longrightarrow } &
 {\mathcal I}_{ C_2}(m)\stackrel{\beta }{ \longrightarrow } &
 {\mathcal I}_{C_2,C_1\cup C_2}(m) & \longrightarrow & 0 \\
 &&&& \uparrow &  \scriptstyle{\thickapprox }\uparrow \\
 &&&& {\mathcal O}_{\PP^3}(m-s_2)\stackrel{\gamma }{ \longrightarrow } & {\mathcal O}_{C_1}(m-s_2)  & \longrightarrow & 0
 \end{array}
$$
where the first vertical arrow comes from the inclusion of $ {\mathcal I}_{ S_2}$ in $ {\mathcal I}_{ C_2}$, and the second vertical arrow is the isomorphism ${\mathcal O}_{C_1}(m-s_2)\cong {\mathcal I}_{T,C_1}(m)\cong {\mathcal I}_{C_2,C_1\cup C_2}(m).$

Taking $\Hl^0$ of the terms in this sequence, $\Hl^0(\gamma )$ is surjective because $C_1$ is ACM. It follows that $\Hl^0(\beta )$ is surjective. On the other hand, $\Hl^1(\PP^3, {\mathcal I}_{ C_2}(m))=0$ because $C_2$ is ACM. Now it follows from the long exact cohomology sequence associated to the first row that $\Hl^1(\PP^3, {\mathcal I}_{C_1\cup C_2}(m))=0$ for all $m\in \ZZ$, so $C_1\cup C_2$ is ACM.
\end{proof}

\begin{remark} \rm It is worthwhile to point out that Theorem \ref{mainthm}(a) can be seen as a generalization of  Proposition 6.3 in \cite{HSS}.
\end{remark}

\begin{remark} \label{ConditionGap} \rm It is annoying to have a dichotomy between the biliaison type  $\lambda _Z$ having a gap or not in the statement, because we cannot tell what  $\lambda _Z$
is like a priori. However in applications we can often eliminate case (b) if $\lambda _{C_1}$ and $\lambda _{C_2}$ do not form subsets of a biliaison type with a gap.  In terms of the invariants $s_i$, $t_i$, $b_i$ of $C_1$ and $C_2$, using the formulas of Remark \ref{d-g-lambda}, the condition that $\lambda _{C_1}$ and $\lambda _{C_2}$ are the two subsets of a $\lambda $ with a gap is that $t_i-s_i\ge b_j+3$ for some choice of $i,j=1,2$.
\end{remark}

If we take into account the second ideal degrees $t_1=t(C_1)$ and $t_2=t(C_2)$, we can strengthen Theorem \ref{mainthm} in some cases.

\begin{theorem} \label{mainthm2} With the hypotheses  of Theorem \ref{mainthm}, assume furthermore that $s_1\ne s_2$ and $s_1,s_2<t_1,t_2$. If the biliaison type $\lambda _Z$ of $C$ has no gaps, then
$$p_a(C)\le G_{CM}(d_1+d_2,min(s_1+s_2,max(t_1,t_2))).$$
Furthermore,  if equality holds, then $C$ is ACM.
\end{theorem}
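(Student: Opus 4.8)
I would run the proof of Theorem~\ref{mainthm}(a) essentially verbatim, improving only the lower bound on the initial degree. Write $Z=Z_1\cup Z_2$ for the general hyperplane section of $C=C_1\cup C_2$, where $Z_i=C_i\cap H$ is the general plane section of the integral curve $C_i$ (reduced, as $\mathrm{char}(k)=0$). Since $\lambda_Z$ has no gaps, Theorem~\ref{h-vectorACM} furnishes an integral ACM curve $C'$ with $\lambda_{C'}=\lambda_Z$; then $\deg C'=d_1+d_2$, $s(C')=s(Z)$, and $p_a(C)\le p_a(C')\le G_{CM}(d_1+d_2,s(Z))$ by Lemma~\ref{firstbound} and the definition of $G_{CM}$. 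Because $G_{CM}(d,\cdot)$ is non-increasing (Remark~\ref{remG_CM}), the whole statement — the equality clause included, since it then follows from Lemma~\ref{firstbound} exactly as in Theorem~\ref{mainthm} — reduces to
\[ s(Z)\ \ge\ \min(s_1+s_2,\ \max(t_1,t_2)). \]
As $I_Z=I_{Z_1}\cap I_{Z_2}$ and $I_{Z_i,n}=0$ for $n<s_i$, this amounts to showing that no nonzero form of degree $n$ with $\max(s_1,s_2)\le n<\min(s_1+s_2,\max(t_1,t_2))$ vanishes on both $Z_1$ and $Z_2$.

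\textbf{The key tool.} First I would record the following consequence of the $h$-vector formulas (Proposition~\ref{deg-genus}). If $D$ is an ACM subscheme (a curve in $\PP^3$, or a $0$-scheme in $\PP^2$) with $\sigma:=s(D)<\tau:=t(D)$, then C2-admissibility together with $\tau=\inf\{m\ge\sigma\mid h_D(m)<\sigma\}$ forces $h_D$ to be constantly $\sigma$ on $[\sigma,\tau)$; a short Hilbert-function computation then shows that $D$ lies on a unique hypersurface $S_D$ of degree $\sigma$ and that every degree-$m$ hypersurface through $D$ with $\sigma\le m<\tau$ is divisible by $S_D$. I will apply this to $Z_1$ and $Z_2$, which then lie on unique plane curves $\Gamma_1,\Gamma_2$ of degrees $s_1,s_2$ — and in fact $\Gamma_i=S_i\cap H$, where $S_i$ is the unique surface of degree $s_i$ through $C_i$, since $S_i\cap H$ is a degree-$s_i$ curve through $Z_i$ — and once more to $C_1$. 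The hypothesis $s_i<t_i$ is precisely what makes all these uniqueness statements valid.

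\textbf{The main inequality.} Relabel so that $s_1<s_2$; the hypothesis then gives $s_2<t_1$ and $s_2<t_2$. Suppose $F\ne0$ of degree $n$, with $s_2\le n<\min(s_1+s_2,\max(t_1,t_2))$, vanishes on $Z_1\cup Z_2$. Pick $j\in\{1,2\}$ with $n<t_j$ (possible since $n<\max(t_1,t_2)$). As $s_j\le s_2\le n<t_j$, the tool factors $F=\Gamma_j\cdot G$ with $\deg G=n-s_j<s_{3-j}$ (using $n<s_1+s_2$). Since $Z_{3-j}$ is reduced and $\Gamma_j$ does not vanish at the points of $Z_{3-j}$ lying off $\Gamma_j$, the form $G$ vanishes on $Z_{3-j}\setminus\Gamma_j$; as $\deg G<s_{3-j}=s(Z_{3-j})$, it cannot vanish on all of $Z_{3-j}$, so $Z_{3-j}\cap\Gamma_j\ne\emptyset$. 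But $Z_{3-j}\cap\Gamma_j=C_{3-j}\cap S_j\cap H$, which is empty for general $H$ whenever $C_{3-j}\not\subseteq S_j$ (then $C_{3-j}\cap S_j$ is a finite scheme). Hence $C_{3-j}\subseteq S_j$. If $j=1$ this is absurd, as $\deg S_1=s_1<s_2=s(C_2)$. If $j=2$, then $C_1\subseteq S_2$, and applying the tool to $C_1$ in degree $s_2$ (legitimate since $s_1\le s_2<t_1$) shows $S_2=S_1\cdot H'$ with $\deg H'=s_2-s_1$; then the integral curve $C_2\subseteq S_2$ would have to lie on $S_1$ (degree $s_1<s_2$) or on $H'$ (degree $<s_2$), again absurd. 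So no such $F$ exists, which establishes the displayed inequality and the theorem.

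\textbf{The main difficulty.} The genuinely delicate point is the case $j=2$, which is forced as soon as $n\ge t_1$: there $F$ factors through the \emph{larger} curve $\Gamma_2$, and $C_1$ may legitimately lie on $S_2$, so the ``a general hyperplane avoids a finite scheme'' step does not by itself close the case. Getting past it requires the second invocation of the tool — to $C_1$ in degree $s_2$ — and this is exactly where the hypothesis $s_2<t_1$ (part of ``$s_1,s_2<t_1,t_2$'') is indispensable: it forces $S_2$ to be divisible by $S_1$, after which the irreducibility of $C_2$ finishes the case. Dropping that hypothesis breaks the argument, matching the fact that the sharper bound can then fail.
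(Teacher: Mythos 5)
Your proof is correct and follows essentially the same route as the paper: reduce to the lower bound $s(Z)\ge\min(s_1+s_2,\max(t_1,t_2))$ via Lemma \ref{firstbound} and the monotonicity of $G_{CM}$, and establish it using the fact that, in degrees below $t_i$, every plane curve through $Z_i$ is a multiple of the unique degree-$s_i$ curve $\Gamma_i=S_i\cap H$, together with the generality of $H$. Your treatment of the range $\min(t_1,t_2)\le n<\max(t_1,t_2)$ (ruling out $C_1\subseteq S_2$ by factoring $S_2=S_1\cdot H'$ from $s_2<t_1$ and invoking the integrality of $C_2$) spells out in detail a step the paper's proof states more tersely (``$E$ must contain both $D_1$ and $D_2$''), but it is the same underlying argument rather than a different approach.
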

\begin{proof} As in the proof of Theorem \ref{mainthm}(a), it will be sufficient to show that $s'\ge min(s_1+s_2,max(t_1,t_2))$, where $s'$ is the initial degree of $Z=Z_1\cup Z_2$. Now $Z_{i}$ (for $i=1,2$) is a general hyperplane section of the integral ACM curve $C_{i}$ with initial degree $s_{i}<t_{i}$. Therefore, $C_{i}$ is contained in  a unique irreducible surface $S_{i}$ of degree $s_{i}$. Its hyperplane section will be an irreducible curve $D_{i}$ of degree $s_{i}$ containing $Z_{i}$. Since $C_{i}$ is an ACM curve, the $h$-vector of $Z_{i}$ is the same as $C_{i}$. It follows that any curve $E$ of degree less than $t_{i}$ containing $Z_{i}$ must contain $D_{i}$ as an irreducible component. Therefore, if $E$ is a curve of degree $s'$ containing $Z$, then either $s'\ge max(t_1,t_2)$ or $E$ must contain both $D_1$ and $D_2$ as irreducible components in which case $s'\ge s_1+s_2$. Now the result follows as in Theorem \ref{mainthm}(a).
\end{proof}

We give some examples to show that the bounds of Theorems \ref{mainthm} and \ref{mainthm2} are sometimes attained and sometimes not.

\begin{example} \rm
(a) Consider two complete intersection curves $C_1=s_1\times t_1$ and $C_2=s_2\times t_2$ with $s=s_1=s_2$. In this case, as we saw in Theorem \ref{ci} (a), the union $C_1\cup C_2$ is a complete intersection $s\times (t_1+t_2)$. This curve realizes the maximum genus $G_{CM}(s(t_1+t_2),s)$ and so gives equality in Theorem \ref{mainthm} (a).

\vskip 2mm
(b) Let $C_1$ and $C_2$ be complete intersections $2 \times 5$ and $3\times 3$. According to Theorem \ref{ci} (b), their union is obtained from $C_2$ by two biliaisons on $S_5$, so has $h$-vector: 1 \ 2 \ 3 \ 4 \ 5 \ 3 \ 1 . This realizes the maximum genus $G_{CM}(19,5)$. Since $5=s_1+s_2$, we have equality in Theorem \ref{mainthm2}. However this is much smaller than $G_{CM}(19,3)$ which is the bound  of Theorem \ref{mainthm} (a).

\vskip 2mm
(c) This time take complete intersections $2\times 6$ and $3\times 3$. Again, as in Theorem \ref{ci} (b), the union is obtained by biliaison from $3\times 3$ on $S_6$, and has $h$-vector: 1 \ 2 \ 3 \ 4 \ 5  \ 4 \ 2. This has genus strictly less than $G_{CM}(21,5)$, which is represented by 1 \ 2 \ 3 \ 4 \ 5 \ 3 \ 2 \ 1 , and much less than $G_{CM}(21,3)$, so neither Theorem \ref{mainthm} (a) nor Theorem \ref{mainthm2} is sharp.

\vskip 2mm
(d)  The bound of Theorem \ref{mainthm} (b)  can always be realized whenever $\lambda _Z$ has a gap. Just take an ACM curve with that $\lambda $-character. Then by Theorem \ref{davis} the intersection of the two subcurves attains the bound of Theorem \ref{mainthm} (b). If the two subcurves have $\lambda $-characters without gaps, then we can take the two curves $C_1$ and $C_2$ to be smooth \cite[Theorem 7.21]{HS}.

\vskip 2mm
(e) We give one more example to show that Theorem \ref{mainthm} (a) can actually fail, if $\lambda _Z$ has a gap. Let $C_1$ have $h$-vector 1 \ 2 and $C_2$  have $h$-vector 1 \ 1 \ 1 \ 1 \ 1 \ 1 \ 1 \ 1 . Then $C_1$ is a twisted cubic curve, with $d=3$, $p_a=0$, and $C_2$ is a plane octic curve with $d=8$, $p_a=21$. The twisted cubic can meet the plane of $C_2$ in at most 3 points; we can make $C_2$ pass through these three points, so the maximum possible intersection is $\#(C_1\cap C_2)=3$. Then $p_a(C_1\cup C_2)=23$, and $C_1\cup C_2$ is an ACM curve (Lemma \ref{ACM}) with $h$-vector 1 \ 2 \ 3 \ 1 \ 1 \ 1 \ 1
 \ 1 . On the other hand, Theorem \ref{mainthm} (a) would give the bound $G_{CM}(11,2)=20$, represented by the $h$-vector 1 \ 2 \ 2 \ 2 \ 2 \ 2 .
\end{example}

\section{Existence of good curves and surfaces}

Throughout this section, we assume $char(k)=0$ and we give some existence theorems for smooth curves  on integral surfaces that may have a finite number of singular points (we call then {\em surfaces with isolated singularities}).

\begin{proposition} \label{Bertini}  Let $C$ be a  curve in $\PP^3$ that has  embedding dimension $\le 2$ at almost all points. Let $m $ be an integer for which ${\mathcal I}_C(m )$ is generated by global sections. Then $C$ is contained in an integral surface of degree $m $ with isolated singularities.
\end{proposition}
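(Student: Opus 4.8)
The plan is to realize the surface as a general member of the linear system $|\mathcal{O}_{\PP^3}(m)|$ restricted to the sublinear system $W \subseteq H^0(\PP^3, \mathcal{O}_{\PP^3}(m))$ of degree-$m$ forms vanishing on $C$, and then apply a Bertini-type argument. Since $\mathcal{I}_C(m)$ is globally generated, the forms in $W$ have no base points off $C$, and on $C$ they cut out exactly the scheme $C$ (i.e. the restriction map $W \to H^0(C, \mathcal{O}_C(m))$ has image the degree-$m$ part of the section ring realizing $C$ as zero locus). So the base locus of $W$ is precisely $C$.

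First I would apply the classical Bertini theorem in characteristic zero: since $W$ is base-point free away from $C$ and $\PP^3 \setminus C$ is smooth, a general $S \in |W|$ is smooth away from $C$, and moreover a general such $S$ is irreducible. Irreducibility follows because $W$ separates points and tangent directions generically on $\PP^3 \setminus C$ (again using global generation of $\mathcal{I}_C(m)$), so the image of $\PP^3 \setminus C$ under the map given by $W$ is not contained in a hyperplane and has dimension $3$; standard Bertini irreducibility then gives a general hyperplane section — i.e. a general $S$ — irreducible. Thus $S$ is an integral surface of degree $m$ containing $C$, smooth outside $C$.

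The remaining point, and the main obstacle, is to control the singularities of $S$ \emph{along} $C$ — we need them to be isolated, i.e. $S$ should be smooth at a general point of $C$. This is where the hypothesis that $C$ has embedding dimension $\le 2$ at almost all points enters. At a general point $p$ of $C$ (away from the bad locus and away from $\operatorname{Sing}(C)$), $C$ is locally a plane curve inside some smooth surface germ, so $\mathcal{I}_{C,p}$ is generated by two elements, one of which may be taken to be a local equation of that ambient surface. Globally generating, one of these generators can be promoted to a section of $\mathcal{I}_C(m)$ near $p$ whose differential at $p$ is nonzero (because the surface germ is smooth at $p$); hence a general member of $W$ has nonvanishing differential at $p$, so $S$ is smooth at $p$. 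Since this holds at a general point of $C$ and $C$ is a curve, the locus where $S$ is singular along $C$ is a proper closed subset of $C$, hence finite. Combined with smoothness off $C$, we conclude $\operatorname{Sing}(S)$ is finite, so $S$ is an integral surface of degree $m$ with isolated singularities containing $C$, as desired. The delicate step to get right is precisely this genericity argument at a general point of $C$: one must check that the constraint ``vanish on $C$ near $p$'' still leaves enough freedom in $W$ to make $dS_p \ne 0$, which uses both the embedding-dimension hypothesis and global generation in degree $m$ together.
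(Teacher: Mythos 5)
Your proposal is correct and follows essentially the same route as the paper: take a general member of the linear system $|{\mathcal I}_C(m)|$, use the characteristic-zero Bertini theorem off the base locus $C$, and use the embedding-dimension-$\le 2$ hypothesis together with global generation to produce, at a suitable point of each component of $C$, a section with nonvanishing differential, so that the singular locus of the general member meets $C$ in a finite set. The only cosmetic difference is that the paper deduces integrality directly from the singularities being isolated (a degree-$m$ surface in $\PP^3$ with a reducible or non-reduced structure would be singular along a curve), rather than invoking a separate Bertini irreducibility argument.
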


\begin{proof} We consider the linear system $|{\mathcal I}_C(m)|$ of surfaces of degree $m$  containing $C$. Since ${\mathcal I}_C(m )$ is generated by global sections, the base locus of $|{\mathcal I}_C(m)|$ is just the curve $C$. Therefore, by  the characteristic zero Bertini theorem,  a general member of the linear system $|{\mathcal I}_C(m)|$   can have only singularities in $C$. Choose one point $P_i$ in each component of $C$, such that $C$ has embedding dimension $\le 2$ at $P_i$. Thus the general surface $X$ of the linear system $|{\mathcal I}_C(m)|$ can have only finitely many singular points. It follows that $X$ is integral and we are done.
\end{proof}

\begin{remark} \rm
If $C$ is a reduced  curve in $\PP^3$ and has  embedding dimension $\le 2$ at  all points, then we can take $X$ smooth. But in our applications, we cannot avoid considering curves that may have a non-reduced component, and in that case $X$ will necessarily have singularities for almost all degrees $m$.
\end{remark}

\begin{proposition}\label{omega-gbgs} Let  $C\subset \PP^3$ be a curve that has embedding dimension $\le 2$ at almost all points, and let $t=t(C)$. Then $\omega _C(3-t)$ is generated by global sections.
\end{proposition}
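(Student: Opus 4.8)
The plan is to identify $\omega_C(3-t)$, up to a twist, with the ideal sheaf of the curve linked to $C$ by a suitable complete intersection, and then to prove that this ideal sheaf is generated by global sections.

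Write $s=s(C)$ and $t=t(C)$. By the definition of $t$, a minimal system of homogeneous generators of $I_C$ has its lowest‑degree member in degree $s$ and its next member in degree $t$; choose such generators $F\in(I_C)_s$ and $G\in(I_C)_t$. Since $s(C)=s$, the forms $F$ and $G$ have no common factor, so $X:=V(F)\cap V(G)$ is a complete intersection curve of type $(s,t)$ containing $C$; let $C'$ be the curve linked to $C$ in $X$. The hypothesis that $C$ has embedding dimension $\le 2$ at almost all points is what makes $C$ generically a complete intersection, so that the resolution of $I_{C'}$ behaves as expected below (and it guarantees the finiteness of the bad locus). Liaison theory gives $\omega_C\cong\mathcal{H}om_{\mathcal{O}_X}(\mathcal{O}_C,\omega_X)\cong\mathcal{I}_{C',X}\otimes\omega_X$, and $\omega_X\cong\mathcal{O}_X(s+t-4)$ because $X$ is a complete intersection of type $(s,t)$; hence
$$\omega_C(3-t)\ \cong\ \mathcal{I}_{C',X}(s-1).$$
So it suffices to show that $\mathcal{I}_{C',X}(s-1)$ is globally generated, i.e. that $C'$ is cut out scheme‑theoretically on $X$ by surfaces of degree $s-1$.

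I would deduce this from the minimal free resolution of the ideal of $C'$, which by liaison is obtained, up to splitting off trivial summands, by dualizing the mapping cone of a comparison map from the Koszul complex of $(F,G)$ to a minimal free resolution of $R/I_C$. The point is that the two lowest generators $F,G$ of $I_C$ occur only in syzygies that pair them with generators of $I_C$ of degree $\ge t$, so in the cone they produce free summands in non‑positive degrees (which are non‑minimal, or else signal $C'=\emptyset$); every syzygy among the remaining generators of $I_C$ lives in degree $\ge t+1$ and therefore contributes a generator of $I_{C'}$ in degree $\le s-1$. Since $F$ and $G$ vanish on $X$, this shows that $\mathcal{I}_{C',X}$ is generated in degrees $\le s-1$, as required.

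The step I expect to be the main obstacle is precisely this degree estimate for the generators of the linked ideal: in the cases where $t$ is only slightly larger than $s$ there are a priori low‑degree syzygies of $I_C$ involving $F$ itself, which could in principle contribute generators of $\mathcal{I}_{C',X}$ in degrees $s,s+1,\dots,t-1$, and one must rule these out (they must either not occur or cancel in the mapping cone), so that no minimal generator of $\mathcal{I}_{C',X}$ survives in degree $\ge s$. An equivalent and perhaps more transparent implementation works on a minimal surface $F$ of degree $s$ through $C$ — which in characteristic zero cannot be singular along $C$ — where adjunction yields $\omega_C(3-t)\cong\mathcal{I}_{C',F}(s-1)|_C$ with $C':=tH-C$ effective and, by the definition of $t$, $\Hl^0(\mathcal{O}_F(C'-H))=0$; one then reduces, using that $F$ is ACM, to showing $\mathcal{I}_{C',F}(s-1)$ is globally generated on $F$, which by a Castelnuovo–Mumford argument on $F$ comes down to $\Hl^2(\mathcal{O}_F((s-3)H-C'))=0$ (immediate from $\Hl^0(\mathcal{O}_F(C'-H))=0$ by Serre duality) together with $\Hl^1(\mathcal{O}_F((s-2)H-C'))=0$, the latter vanishing being, once again, the heart of the matter.
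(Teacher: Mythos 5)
Your route is genuinely different from the paper's: the paper argues by induction on $s(C)$, performing a descending biliaison $C'\in|C-H|$ on a surface of degree $b(C)+1$ with isolated singularities (Proposition \ref{Bertini} together with the sequence $0\to\mathcal{O}_X\to\mathcal{O}_X(C)\to\omega_C(4-m)\to 0$), whereas you link $C$ directly and read off global generation from the mapping--cone resolution. Two of your three steps are sound, and the one you flag as ``the heart of the matter'' is actually the easy one. The liaison identification $\omega_C(3-t)\cong\mathcal{I}_{C',X}(s-1)$ is correct, and the degree estimate holds: since $I_{C,\ell}$ consists of multiples of the single form $F$ for all $\ell<t$, a minimal second syzygy $\sum g_if_i=0$ in degree $\le t$ would have constant coefficients on every generator of degree $\ge t$; minimality forces those constants to vanish, leaving $g_0F=0$, hence $g_0=0$. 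So every minimal syzygy of $I_C$ lives in degree $b_j\ge t+1$, every minimal generator of $I_{C'}/I_X$ has degree $s+t-b_j\le s-1$, and a graded module generated in degrees $\le s-1$ sheafifies to a sheaf whose twist by $s-1$ is globally generated. (Like the paper's own proof, this tacitly assumes $C$ is ACM, so that $I_C$ has projective dimension one.)

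The genuine gap is the very first step: a complete intersection of type $s\times t$ containing $C$ need not exist, and your justification (``since $s(C)=s$, the forms $F$ and $G$ have no common factor'') is not valid. Concretely, let $C_1\subset\{x=0\}$ be a smooth plane quintic with $I_{C_1}=(x,f)$, $f\in k[y,z,w]$, and let $C_2\subset\{y=0\}$ be a plane septic passing through the five points of $C_1\cap\{y=0\}$. Then $C=C_1\cup C_2$ is an ACM curve with embedding dimension $\le 2$ away from finitely many points, $s(C)=2$ with $I_{C,2}=k\cdot xy$, and $t(C)=6$ with $I_{C,6}=xy\cdot(\hbox{quartics})+k\cdot yf$, which is entirely contained in the principal ideal $(y)$. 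Every sextic through $C$ shares the factor $y$ with $F=xy$, so no complete intersection of type $2\times 6$ contains $C$, and your construction cannot start. The repair is cheap: link instead by $s\times n$ for any $n\ge t$ such that $I_{C,n}$ contains a form prime to $F$ (such $n$ exists because $I_C$ has height $2$); then $\omega_C(3-t)\cong\mathcal{I}_{C',X}(s+n-t-1)$ and the same bound $b_j\ge t+1$ places the generators of $\mathcal{I}_{C',X}$ in degrees $s+n-b_j\le s+n-t-1$, so the conclusion follows as before. But as written the proof breaks at this step, and your proposed fallback on a surface of degree $s$ runs into the same example (the unique quadric through that $C$ is the reducible surface $xy=0$, so the adjunction-theoretic version does not obviously apply either).
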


\begin{proof}  We proceed by induction on $s=s(C)$. If $s=1$, then $C$ is a plane curve of some degree $d$, and has $t=d$. In this case $\omega _C\cong {\mathcal O}_C(d-3)$, so $\omega _C(3-t)\cong {\mathcal O}_C$, which is generated by global sections.

Assume $s\ge 2$. Let $m=b(C)+1$, which is the index of regularity of $C$. Thus ${\mathcal I}_C(m)$ is generated by global sections. So, by Proposition \ref{Bertini}, $C$ is contained in a surface $X$ of degree $m$ with isolated singularities. On the surface $X$ we can write the exact sequence (see \cite[pg. 37]{GP} and \cite[Proposition 2.10]{GD})
$$ 0\longrightarrow {\mathcal O}_X\longrightarrow {\mathcal O}_X(C)\longrightarrow \omega_{C}(4-m)\longrightarrow 0.$$

Since $\omega _X\cong {\mathcal O}(m-4)$, twisting by ${\mathcal O}_X(-H)$, we obtain
$$ 0\longrightarrow {\mathcal O}_X(-H)\longrightarrow {\mathcal O}_X(C-H)\longrightarrow \omega_{C}(3-m)\longrightarrow 0.$$

Now $e=e(C)$ is equal to $b(C)-2$, which is $m-3$, so the term on the right is just $\omega _C(-e)$. By definition of $e$ and duality, $\Hl^0(\omega _C(-e))\ne 0$. Lifting a non-zero section of $\Hl^0(\omega _C(-e))$, we obtain a non-zero section of $\Hl^0({\mathcal O}_X(C-H))$. Since $X$ is integral, this gives us an effective generalized divisor $C'$ in the linear system $|C-H|$ on $X$. Then $C'$ is another ACM curve on $X$. By construction, it has $s'=s(C')<s$  and since it is contained in a surface with isolated singularities, it has embedding dimension $\le 2$ at almost all points. Therefore by the induction hypothesis  $\omega _{C'}(3-t')$  is generated by global sections, where $t'=t(C')$.

For $C'$ on $X$ we have the exact sequence
$$ 0\longrightarrow {\mathcal O}_X\longrightarrow {\mathcal O}_X(C')\longrightarrow \omega_{C'}(4-m)\longrightarrow 0.$$
Since $m=b(C)+1$, we have $t\le m$. On the other hand, looking at the $h$-vectors $h$ and $h'$ of $C$ and $C'$, we have
$$h'(n)=h(n)-1 \text{ for } 0\le n \le b,$$
so $t'=t-1$. Therefore, we have $3-t'\ge 4-m$ and twisting by some nonnegative integer $\delta \ge 0$ we find
$$ 0\longrightarrow {\mathcal O}_X(\delta H)\longrightarrow {\mathcal O}_X(C'+\delta H)\longrightarrow \omega_{C'}(3-t')\longrightarrow 0.$$
Now $\omega _{C'}(3-t')$ and ${\mathcal O}_X(\delta H)$ are both generated by global sections. So ${\mathcal O}_X(C'+\delta H)$ is also generated by global sections. But this sheaf is equal to ${\mathcal O}_X(C+(\delta -1)H )$, which maps surjectively to $\omega _C(3-t)$, and so this latter is also generated by global sections, as required.
\end{proof}

\begin{proposition}\label{C'_smooth}  Let  $C\subset \PP^3$ be an ACM curve contained in a surface $X$ of degree $m$ with isolated singularities. Assume that $C$ is smooth at any singular point of $X$ that it may contain, and assume that $m\le t(C)+1$. Then there is an irreducible smooth curve $C'$ in the linear system $|C|$ on $X$.
\end{proposition}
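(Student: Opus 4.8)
The plan is to produce $C'$ as a general member of the complete linear system $|C|$ on $X$, after first showing that this system has no base points. To set up the key tool, note that since $X$ is a hypersurface of degree $m$ in $\PP^3$ it is Gorenstein with $\omega_X\cong\mathcal O_X(m-4)$, and since $C$ is ACM it is a generalized divisor on $X$ (pure of dimension one, no embedded points). Adjunction on $X$ (cf.\ \cite[pg.~37]{GP} and \cite[Proposition~2.10]{GD}), twisted by $\omega_X^{-1}=\mathcal O_X(4-m)$, yields an exact sequence
$$0\longrightarrow \mathcal O_X\longrightarrow \mathcal O_X(C)\longrightarrow \omega_C(4-m)\longrightarrow 0,$$
in which the first map is multiplication by the tautological section $s_C$ cutting out $C$. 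From $0\to\mathcal O_{\PP^3}(-m)\to\mathcal O_{\PP^3}\to\mathcal O_X\to 0$ and the vanishing $\Hl^1(\mathcal O_{\PP^3})=\Hl^2(\mathcal O_{\PP^3}(-m))=0$ we get $\Hl^1(X,\mathcal O_X)=0$, so the restriction map $\Hl^0(X,\mathcal O_X(C))\to \Hl^0(C,\omega_C(4-m))$ is surjective with kernel spanned by $s_C$; in particular $\dim|C|=h^0(\omega_C(4-m))$.

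Next I would show $|C|$ is base-point-free on $X$. The hypothesis $m\le t(C)+1$ means $4-m\ge 3-t(C)$, so $\omega_C(4-m)\cong\omega_C(3-t)\otimes\mathcal O_C(t+1-m)$ with $t+1-m\ge 0$. The curve $C$ has embedding dimension $\le 2$ at every point outside the finite set $C\cap\operatorname{Sing}(X)$, and the standing hypothesis that $C$ be smooth at the singular points of $X$ it meets in fact makes the embedding dimension of $C$ at most $2$ everywhere; in any event Proposition~\ref{omega-gbgs} applies and shows $\omega_C(3-t)$ is generated by global sections. Tensoring with the very ample $\mathcal O_C(1)$ preserves global generation, so $\omega_C(4-m)$ is globally generated on $C$. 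Then for $P\notin C$ the section $s_C$ is non-zero at $P$, while for $P\in C$ one lifts, through the surjection above, a section of $\omega_C(4-m)$ non-vanishing at $P$ to a section of $\mathcal O_X(C)$ non-vanishing at $P$; hence $|C|$ has no base points.

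Finally I would invoke Bertini. Since $|C|$ is base-point-free and $\operatorname{char}k=0$, a general member $C'\in|C|$ is smooth away from $\operatorname{Sing}(X)$; and since $|C|$ has no base points, a general $C'$ misses each of the finitely many singular points of $X$, so $C'$ lies in the smooth locus of $X$ and is therefore smooth. For irreducibility one uses that a general member of a base-point-free linear system in characteristic zero is connected unless the system is composed with a pencil, together with the fact that $C$ is ACM — hence connected, with a well-defined arithmetic genus — which excludes the pencil configurations in which a general member would split into several fibres. A smooth connected curve is irreducible, so $C'$ is the required curve.

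The step I expect to be the main obstacle is the base-point-freeness precisely along $C\cap\operatorname{Sing}(X)$: at such a point $Q$ the sheaf $\mathcal O_X(C)$ need not be locally free, so one must check carefully that the cokernel in the adjunction sequence really is $\mathcal O_X(C)\otimes\mathcal O_C$ and that a section of $\omega_C(4-m)$ non-vanishing at $Q$ genuinely lifts to a section of $\mathcal O_X(C)$ non-vanishing at $Q$ — this is exactly where the hypothesis "$C$ smooth at any singular point of $X$ that it may contain" is used. Ruling out (or disposing of) the case where $|C|$ is composed with a pencil is a secondary technical point.
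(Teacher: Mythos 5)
Your proposal is correct and follows essentially the same route as the paper: global generation of $\omega_C(3-t)$ via Proposition \ref{omega-gbgs}, the inequality $4-m\ge 3-t$ to get $\omega_C(4-m)$ globally generated, lifting sections through the adjunction sequence $0\to\mathcal O_X\to\mathcal O_X(C)\to\omega_C(4-m)\to 0$ to make $|C|$ base-point-free, then characteristic-zero Bertini plus connectedness of ACM curves for smoothness and irreducibility. The only cosmetic difference is that the paper does not insist the general member avoid $\operatorname{Sing}(X)$; it simply notes that any residual base points lie among the singular points of $X$, where $C$ is smooth by hypothesis, so Bertini still yields smoothness there.
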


\begin{proof}  Since $C$ is contained in a surface $X$ with isolated singularities, it has embedding dimension $\le 2$ at almost all points. Therefore, by Proposition \ref{omega-gbgs}, $\omega _C(3-t)$ is generated by global sections. Since $m\le t+1$, it follows that $4-m\ge 3-t$, and so $\omega _C(4-m)$ is also generated by global sections. From the exact sequence
$$ 0\longrightarrow {\mathcal O}_X\longrightarrow {\mathcal O}_X(C)\longrightarrow \omega_{C}(4-m)\longrightarrow 0$$
we conclude that ${\mathcal O}_X(C)$ is also generated by global sections. Therefore the linear system $|{\mathcal O}_X(C)|$ on $X$ has no base points except possibly at the singular points of $X$ and at each of these $C$ is assumed to be smooth. Thus, by characteristic zero Bertini theorem, a general curve $C'\in |C|$ is smooth everywhere. Being an ACM curve it is also connected hence irreducible.
\end{proof}

\begin{remark} \rm Note the (possibly unexpected) consequence of the hypotheses of Proposition \ref{C'_smooth}, namely that the $h$-vector of $C$  must be of decreasing type since $C'$ is smooth and has the same  $h$-vector.  Note also that  since $C$ is contained in an irreducible surface of degree $m$, we must have $m=s$ or $m\ge t$. Hence there are really only three possibilities for $m$, namely, $m=s$, $m=t$, or $m=t+1$.
\end{remark}

\begin{proposition} \label{technicalresult} Let $C\subset \PP^3$ be a curve, not necessarily ACM, contained in a surface $X$ of degree $s=s(C)$ with isolated singularities. Then for any $m\ge t(C)$, the curve $C$ is also contained in a surface $X'$ of degree $m$ with isolated singularities.
\end{proposition}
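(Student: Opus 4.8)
The plan is to deduce the statement from Proposition~\ref{Bertini}, applied not to $C$ itself — the sheaf ${\mathcal I}_C(m)$ need not be globally generated, even for $m\ge t(C)$ — but to the base scheme of the system of degree-$m$ surfaces through $C$.

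So fix $m\ge t=t(C)$ and let $D$ be the scheme-theoretic base locus of the linear system $|{\mathcal I}_C(m)|$, i.e.\ ${\mathcal I}_D$ is the image of the evaluation map $\Hl^0(\PP^3,{\mathcal I}_C(m))\otimes_k{\mathcal O}_{\PP^3}(-m)\to{\mathcal O}_{\PP^3}$. By construction ${\mathcal I}_D(m)$ is generated by global sections and $C\subseteq D$, and I would verify three properties of $D$. First, $D\subseteq X$: if $f$ is a form of degree $s=s(C)$ cutting out $X$, then $\Hl^0(\PP^3,{\mathcal I}_C(m))$ contains $fx_0^{m-s},\dots,fx_3^{m-s}$, whose common zero locus is exactly $X$, so ${\mathcal I}_X\subseteq{\mathcal I}_D$. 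Second, $D$ does not contain $X$: from the exact sequence
$$0\longrightarrow{\mathcal O}_{\PP^3}(m-s)\stackrel{\cdot f}{\longrightarrow}{\mathcal I}_C(m)\longrightarrow{\mathcal I}_{C,X}(m)\longrightarrow 0$$
and $\Hl^1(\PP^3,{\mathcal O}_{\PP^3}(m-s))=0$ we get $\Hl^0({\mathcal I}_C(m))/f\cdot\Hl^0({\mathcal O}_{\PP^3}(m-s))\cong\Hl^0(X,{\mathcal I}_{C,X}(m))$, and this is nonzero for all $m\ge t(C)$, because — $X$ having the minimal degree $s(C)$ — the second ideal degree $t(C)$ is precisely the least degree of a hypersurface through $C$ not containing $X$. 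Hence some member of $|{\mathcal I}_C(m)|$ does not contain $X$, so $X$ is not a fixed component of the system; since $X$ is irreducible, this together with $D\subseteq X$ gives $\dim D\le 1$. Third, $D$ has embedding dimension $\le 2$ at almost all points, since it lies on $X$, which is smooth away from its finitely many singular points.

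With these in hand, Proposition~\ref{Bertini} applies to $D$ (with the integer $m$): because ${\mathcal I}_D(m)$ is globally generated and $D$ satisfies the embedding-dimension hypothesis, $D$ — and hence $C$, which is contained in $D$ — lies on an integral surface $X'$ of degree $m$ with isolated singularities, which is exactly what is claimed.

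I expect the second property to require the most care: identifying $t(C)$ with the least degree of a surface through $C$ that does not contain $X$. For ACM curves this is immediate from the combinatorial description of $t(C)$ recalled in \S 3 (Proposition~\ref{deg-genus}); in the general case it amounts to unwinding the definition of the second ideal degree, and this is exactly where the hypothesis $\deg X=s(C)$ enters — if $X$ did not have minimal degree, then $C$ might already lie on a surface of smaller degree and $t(C)$ would not control the base locus $D$ in this way.
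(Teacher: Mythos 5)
Your proof is correct in substance and rests on the same two pillars as the paper's: the observation that, by the definition of $t(C)$ (and the integrality of $X$, which forces its equation $f$ to be irreducible), there is a form of degree $\le t(C)$ in $I_C$ not divisible by $f$, hence a surface of every degree $m\ge t(C)$ through $C$ not containing $X$; and an application of Proposition \ref{Bertini} to an auxiliary scheme squeezed between $C$ and $X$ whose twisted ideal sheaf is globally generated. Where you differ is the choice of that auxiliary scheme. The paper takes the complete intersection $C'=X\cap V(g)$ of type $s\times t$: then ${\mathcal I}_{C'}(m)$ is globally generated for all $m\ge t$ because ${\mathcal I}_{C'}$ is generated by the two forms, $C'\subseteq X$ gives embedding dimension $\le 2$ away from the finitely many singular points of $X$, and Proposition \ref{Bertini} applies immediately; this avoids any cohomological computation. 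Your choice, the base scheme $D$ of $|{\mathcal I}_C(m)|$, also works, and your verifications ($D\subseteq X$ via the sections $fx_i^{m-s}$, and $\Hl^0(X,{\mathcal I}_{C,X}(m))\ne 0$ via the sequence $0\to{\mathcal O}_{\PP^3}(m-s)\to{\mathcal I}_C(m)\to{\mathcal I}_{C,X}(m)\to 0$) are sound, but it creates one point you should patch: $D$ need not be a curve in the paper's sense (purely one-dimensional and locally Cohen--Macaulay) -- it may carry isolated or embedded zero-dimensional components -- so Proposition \ref{Bertini} does not apply verbatim to it. This is not a real obstruction: the proof of \ref{Bertini} only needs $\dim D\le 1$ and the embedding-dimension condition at general points of the one-dimensional components, since finitely many extra points contribute at most finitely many singular points of the general member; alternatively, note that $D$ is contained in the complete intersection of $X$ with any member of the system not containing $X$, and apply \ref{Bertini} to that curve instead -- which is exactly the paper's argument. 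Also note that only the inequality ``$t(C)$ is at most the least degree of a surface through $C$ not containing $X$'' (indeed, the existence in some degree $\le t(C)$, then padded by general linear forms) is needed, not the precise equality you state.
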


\begin{proof} By definition of $s$ and $t$, the curve $C$ must be contained in a complete intersection $C'$ of type $s\times t$, for some surface of degree $t$. Since $C'$ is contained in $X$ with isolated singularities, it follows that $C'$ has embedding dimension $\le 2$ at almost all points. Also clearly, ${\mathcal I}_{C'}(m)$ is generated by global sections for any $m\ge t$. Then by Proposition \ref{Bertini}, $C'$ is contained in a surface $X'$ of degree $m$ with isolated singularities. Now $C$ is contained in $C'$, so $C$ is also contained in $X'$.
\end{proof}

\begin{remark} \rm Even if $C$ is nonsingular, we cannot be sure that $X'$ can be taken to be nonsingular. The trouble is that $C'$ contains another piece, the curve linked to $C$ by $C'$, over which we have no control. If it has a non-reduced component, then $X'$ can be singular. This  is the reason why we have allowed isolated singularities throughout this section.
\end{remark}

\begin{proposition}\label{S-smooth} If $h$ is an $h$-vector of decreasing type, then there exists a smooth ACM curve $C$ with $h$-vector $h$ lying on a surface $X$ of degree $s=s(h)$ having isolated singularities.
\end{proposition}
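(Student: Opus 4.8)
*If $h$ is an $h$-vector of decreasing type, then there exists a smooth ACM curve $C$ with $h$-vector $h$ lying on a surface $X$ of degree $s=s(h)$ having isolated singularities.*

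The plan is to produce $C$ by descending induction on $s(C)$, mirroring the strategy already used in the proof of Proposition \ref{omega-gbgs}, and then to invoke Proposition \ref{C'_smooth} to smooth the curve on a surface of the lowest possible degree. First I would observe that by Theorem \ref{h-vectorACM}(b) there exists \emph{some} smooth irreducible ACM curve $C_0$ with $h$-vector $h$; the issue is only to find such a curve lying on an integral surface of degree exactly $s=s(h)$ with isolated singularities. If $s=t(h)$, then $\mathcal I_{C_0}(s)$ is generated by global sections (a principal ideal sheaf in that degree), so Proposition \ref{Bertini} immediately puts $C_0$ on an integral surface of degree $s$ with isolated singularities (indeed smooth, since $C_0$ is smooth), and we are done. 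So assume $s < t(h)$.

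Next, for the inductive step, I would pass to a curve of smaller initial degree. Starting from any smooth ACM curve with $h$-vector $h$, consider its $h$-vector $h$ and form the "reduced" $h$-vector $h'$ obtained by decreasing each nonzero value below degree $b$ by one, exactly as in the proof of Proposition \ref{omega-gbgs}; this $h'$ is again C2-admissible of decreasing type, with $s(h')=s-1$ and $t(h')=t-1$. By the induction hypothesis there is a smooth ACM curve $C'$ with $h$-vector $h'$ on an integral surface $X'$ of degree $s-1$ with isolated singularities. Now I would reverse the biliaison: on $X'$, use the exact sequence $0\to \mathcal O_{X'}(C'+ (\delta-? )H)\to \cdots$ as in Proposition \ref{omega-gbgs}, or more directly take a general member $C$ of the linear system $|C'+H|$ on $X'$ (an integral surface of degree $m'=s-1$, and $m' \le t(C')+1$ precisely because $s-1 \le (t-1)+1=t$, in fact $s-1 < t$). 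By Proposition \ref{C'_smooth} applied on $X'$ — noting $C'$ is smooth hence smooth at the singular points of $X'$ — the general $C\in |C'+H|$ is smooth and irreducible. Computing $h$-vectors, $C$ has $h$-vector $h$ and initial degree $s(C)=s$.

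Finally, having a smooth ACM curve $C$ of initial degree $s$ with $s<t(C)$, I would produce the surface. Since $C$ is smooth it has embedding dimension $\le 2$ everywhere, so $\mathcal I_C(s)$ is a nonzero sheaf; but we need it globally generated, which it need not be. Instead I would simply apply Proposition \ref{Bertini} with $m=\operatorname{reg}(C)=b+1$ to get $C$ on an integral surface $X_0$ of degree $b+1$ with isolated singularities, and then note $s = s(C) \le t(C) \le b+1$; if $s = b+1$ we are done, otherwise $X_0$ has degree $> s$ and I instead argue: $C$ lies on a surface of degree $s$ by definition, and combining this with Proposition \ref{Bertini}'s output via the linear system $|\mathcal I_C(s)|$ — whose base locus is contained in $C$ union a fixed lower-dimensional locus — a general member is integral with only isolated singularities away from finitely many points of $C$, and since $C$ is smooth the member is singular only at finitely many points. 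The main obstacle is exactly this last point: $\mathcal I_C(s)$ is typically \emph{not} globally generated when $s<t$, so the naive Bertini argument on surfaces of degree $s$ does not apply directly, and one must either route the construction through the inductive biliaison (so that $X$ is obtained as $X'$ from the lower step, already of the right degree $s$) or analyze the non-reduced base locus of $|\mathcal I_C(s)|$ by hand. I expect the cleanest route is to carry the surface along in the induction: at each stage keep track not just of the smooth curve but of the integral surface of degree equal to its initial degree, so that the biliaison step on $X'$ directly furnishes the surface $X=X'$ of degree $s-1 = s(C')$ — wait, that has the wrong degree — so in fact one records the surface of degree $s(C)$ at each stage and rebuilds it from $X'$ together with one extra hyperplane, i.e. $X = X' \cup H'$ would be reducible; the honest fix is that when $s<t$, $C$ actually lies on $X'$ itself only if $s-1\ge s(C)$, which fails, so one genuinely needs a fresh surface of degree $s$ containing the new $C$, and this must be extracted from the fact that the biliaison class on surfaces of degree $s$ is nonempty — concretely, $\mathcal I_C(s)$ has a nonzero section, and one checks that a general such section (possibly after the construction guarantees $\mathcal I_C(s)$ is globally generated, which \emph{does} hold here because $C$ was built as a divisor on a surface of degree $s$ in the \emph{first} place when $s=s(C_{\mathrm{base}})$) cuts out an integral surface with isolated singularities by Bertini. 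Sorting out precisely in which degrees global generation holds is the delicate bookkeeping step.
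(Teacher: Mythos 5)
Your overall strategy (induction plus an ascending biliaison, with Proposition \ref{C'_smooth} supplying smoothness) is the right one, but the inductive step as you have written it does not close up, and your final paragraph essentially concedes this. There are two concrete failures. First, a general member $C$ of $|C'+H|$ on a surface $X'$ of degree $s-1$ satisfies $h_C(n)=h_{C'}(n-1)+\varepsilon(n)$ with $\varepsilon(n)=1$ only for $0\le n\le s-2$; hence $h_C(s-1)=h'(s-2)=s-1<s$ and $s(C)=s-1$, not $s$. (A curve lying on a surface of degree $s-1$ can never have initial degree $s$.) So the biliaison must be performed on a surface of degree $s$, which your induction does not supply. Second, your reduction (subtract $1$ from every entry and shift) gives $s(h')=s-1$ and $t(h')=t-1$, and when $t>s+1$ the inductive curve $C'$ need not lie on \emph{any} integral surface of degree $s$: for $h=1\,2\,3\,3\,3\,2\,1$ one gets $h'=1\,2\,2\,2\,1$, realized only by a complete intersection $2\times 4$, and every cubic containing such a curve is the quadric times a plane. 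So "adding back a hyperplane on a degree-$s$ surface" is impossible there. Your fallback, Bertini applied to $|{\mathcal I}_C(s)|$, fails for the reason you yourself identify (${\mathcal I}_C(s)$ is not globally generated when $s<t$); and the claim that $s=t$ forces global generation of ${\mathcal I}_C(s)$ is also false (the curve with $h$-vector $1\,2\,3\,2$, linked to a line by a $3\times 3$, has $s=t=3$ but a minimal generator in degree $4$).

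The paper avoids both problems by a different reduction: it sets $h'=h-H_s$, removing only the hyperplane class of a \emph{degree-$s$} surface rather than the full hyperplane section, so that $\deg h'<\deg h$ while $s(h')\in\{s-1,s\}$. If $t>s$ then $s(h')=s$ and the inductive surface already has the right degree; if $t=s$ then $s(h')=s-1$ but automatically $t(h')\le s$, and Proposition \ref{technicalresult} — the tool missing from your write-up — upgrades the inductive surface of degree $s-1$ to an integral surface $X_s$ of degree $s$ with isolated singularities containing $C'$. The biliaison $C\sim C'+H$ is then carried out on $X_s$ itself, Proposition \ref{C'_smooth} applies because $m=s\le t+1$, and $X_s$ \emph{is} the surface demanded by the statement, so no Bertini argument on $|{\mathcal I}_C(s)|$ is ever needed. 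If you want to salvage your version, you must replace your reduction by one that keeps the target surface degree within reach of Proposition \ref{technicalresult} at every step; as it stands the step from $C'$ back to $C$ is a genuine gap.
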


\begin{proof} We proceed by induction on the degree. Let $s=s(h)$. Let $h'=h-H_s$.

If $t>s$, then $s(h')=s$ and by induction there is a smooth ACM curve $C'$ with $h$-vector $h'$ on a surface $X_s$ of degree $s$ with isolated singularities. Consider $C=C'+H$ on $X_s$. By  Proposition \ref{C'_smooth} since $m=s<t$, there is a smooth curve $C\sim C'+H$ on $X_s$.

If $t=s$, then $s(h')=s-1$ and $t(h')=s-1$ or $s$, since $h$ had decreasing type. Now by induction we find a smooth ACM curve $C'$  on a surface $X_{s-1}$ of degree $s-1$ with isolated singularities. By  Proposition \ref{technicalresult}, since $t(h')=s-1$ or $s$, $C'$ is also contained in a surface $X_s$ of degree $s$ with isolated singularities. Then as before, there is a smooth curve $C\sim C'+H$ on $X_s$.
\end{proof}

\begin{remark} \rm
 In fact one can require the surface $X$ in Proposition \ref{S-smooth} to be smooth. See for example \cite{N}, Proposition 2.6 and the proof of Theorem 3.3.
\end{remark}

\section{Linked curves}

 If $C_1$ and $C_2$ are two ACM curves in $\PP^3$ linked by a complete intersection $m\times n$ ($m\le n$),  then their $h$-vectors satisfy the well-known relationship
$$h_2(\ell )=h_{m, n}(\ell )-h_1(m+n-2-\ell)$$
for each $\ell \in \ZZ$, where $h_{m, n}$ is the $h$-vector of the complete intersection, which is
$$h_{m, n}(\ell )=\begin{cases} \ell + 1 & \text{ for }  0\le \ell \le m-1, \\
 m & \text{ for }  m-1\le \ell \le n-1, \\
 m+n-1- \ell & \text{ for }  n-1\le \ell \le m+n-1, \\
 0 & \text{ otherwise. }
\end{cases}$$

\begin{lemma} \label{tec111} Let the $h$-vectors $h_1$, $h_2$ be linked by $h_{m,n}$ with $m\le n$, and assume that $m=s_1=s(h_1)$. Then
\begin{itemize}
\item[(a)] $b_2\le n-2$,
\item[(b)] If furthermore $s_2<s_1$, then $n\le b_1+1$, so $b_2<b_1$, and also $t_2\le m$.
\end{itemize}
\end{lemma}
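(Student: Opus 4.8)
The plan is to argue entirely from the linkage formula $h_2(\ell)=h_{m,n}(\ell)-h_1(m+n-2-\ell)$, the explicit shape of $h_{m,n}$ recalled above, and the fact that $m=s_1=s(h_1)$ forces $h_1(j)=j+1$ for $0\le j\le m-1$ and $h_1(j)=0$ for $j<0$. We may assume $h_1,h_2\not\equiv 0$, so that $h_2(0)=1$ and $b_1<\infty$. For part (a), observe that if $\ell\ge n-1$ then $m+n-2-\ell\le m-1$, so both $h_{m,n}(\ell)$ and $h_1(m+n-2-\ell)$ equal $\max\{0,\,m+n-1-\ell\}$; hence $h_2(\ell)=0$ for all $\ell\ge n-1$, i.e.\ $b_2\le n-2$.

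For part (b), assume $s_2<s_1=m$ (which also forces $m\ge 2$). Then some $\ell$ with $0\le\ell\le m-1$ satisfies $h_2(\ell)<\ell+1$; since $h_{m,n}(\ell)=\ell+1$ in this range, this gives $h_1(m+n-2-\ell)>0$, hence $m+n-2-\ell\le b_1$, and using $\ell\le m-1$ we get $n-1\le b_1$, i.e.\ $n\le b_1+1$; combined with part (a), $b_2\le n-2\le b_1-1<b_1$. For the inequality $t_2\le m$, I would first note that, since $h_2$ is non-increasing for arguments $\ge s_2-1$ with $h_2(s_2-1)=s_2$ and since $m>s_2$, the statement $t_2\le m$ is equivalent to $h_2(m)<s_2$. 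From $h_2(0)=1$ one gets $h_1(m+n-2)=0$, hence $b_1\le m+n-3$, and together with the equivalence $h_2(\ell)<\ell+1\iff \ell\ge m+n-2-b_1$ (valid for $0\le\ell\le m-1$) this pins down $s_2=m+n-2-b_1$. It then remains to compute $h_2(m)=h_{m,n}(m)-h_1(n-2)$: when $m=n$ this equals $(m-1)-(m-1)=0<s_2$, and when $m<n$ it equals $m-h_1(n-2)$, so it suffices to prove $h_1(n-2)\ge b_1-n+3\ (=m+1-s_2)$.

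This last estimate is the crux, and it is here that I expect to need the extra hypothesis that $h_1$ is of decreasing type — the case relevant to the applications of this lemma, where $C_1$ is an integral curve. Granting it: since $s_1-1\le n-2\le b_1$, either $h_1(n-2)=s_1=m$, in which case $h_2(m)=0<s_2$; or $n-2$ lies in the strictly decreasing tail $[t_1,b_1]$ of $h_1$, so that $h_1(n-2)>h_1(n-1)>\cdots>h_1(b_1)\ge 1$ is a strictly decreasing run of $b_1-n+3$ positive integers, forcing $h_1(n-2)\ge b_1-n+3$ and hence $h_2(m)\le s_2-1<s_2$. In either case $t_2\le m$. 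I would also point out in the write-up that the decreasing-type hypothesis is genuinely needed for $t_2\le m$: taking $h_1=\{1,2,1,1,1,1,1,1,1,1\}$ linked by $h_{2,10}$ gives $h_2=\{1,1,1,1,1,1,1,1,1\}$ with $s_2=1<s_1=2$ but $t_2=9>m=2$, so the clause should either carry that hypothesis or be stated for integral $C_1,C_2$.
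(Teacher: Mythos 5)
Your proof is correct and follows essentially the same route as the paper's: part (a) from the shape of $h_{m,n}$ and $h_1(\ell)=\ell+1$ for $\ell\le m-1$, part (b) by evaluating the linkage formula at $\ell=m-1$ to get $n\le b_1+1$, and the bound $h_1(n-2)\ge b_1-n+3$ from decreasing type to force $h_2(m)<s_2$. Your observation that the decreasing-type hypothesis on $h_1$ is genuinely needed for $t_2\le m$ is well taken — the paper's proof also invokes ``since $h_1$ has decreasing type'' even though the lemma as stated omits it (it holds in the application, where $C_1$ is integral) — and your counterexample correctly pins this down.
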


\begin{proof}
(a) Since $m=s_1$, clearly $b_2\le b(h_{m,n})-m=(m+n-2)-m=n-2$.

(b) Since $s_2<s_1=m$, clearly $h_2(\ell )<m$ for all $\ell $. Taking $\ell =m-1$, we find $h_2(m-1)=h_{m,n}(m-1)-h_1(n-1)$. Since $h_{m,n}(m-1)=m$, we find $h_1(m-1)>0$, so $n-1\le b_1$. In particular, using part (a), we get $b_2<b_1$.

To find $t_2$, if  $m=n$, then $b_2\le m-2$, so $t_2\le m-1   $. If $n>m$, we compute $h_1(n-2)\ge b_1-n+3$ since $h_1$ has decreasing type. Then $h_2(m)\le m+n-_1-3<s_2$. Therefore, $t_2\le m$.
\end{proof}

\begin{theorem} \label{thm62} (a) If $C_1$ and $C_2$ are integral ACM curves that are linked by a complete intersection  $m\times n$ with $m=s_1\le n$, then the union $C_1\cup C_2$ gives the maximum possible intersection of integral ACM curves with $h$-vectors equal to those of $C_1$ and $C_2$.

(b) Given $h$-vectors of decreasing type $h_1$, $h_2$ linked by a complete intersection $m\times n$, with $m=s_1\le n$, there exist smooth ACM curves $C_1$, $C_2$ lying on  a surface $X$ of degree $m$ with isolated singularities and such that $C_2\sim nH-C_1$ on $X$, so that $C_1$ and $C_2$ are linked by a complete intersection $m\times n$.
\end{theorem}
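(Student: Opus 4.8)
The plan is to treat parts (a) and (b) separately, reducing (a) to the genus bounds of \S4 and building (b) out of the existence results of \S5.

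For (a) I would first translate the statement into one about arithmetic genus: by Lemma~\ref{genus-union}, for curves without a common component $\#(C_1'\cap C_2')$ and $p_a(C_1'\cup C_2')$ determine each other, and $p_a(C_i')$ depends only on the $h$-vector, so it suffices to show that $C_1,C_2$ maximize $p_a(C_1'\cup C_2')$ among all integral ACM curves $C_1',C_2'$ with $h$-vectors $h_1,h_2$. For the given linked pair, geometric linkage forces $C_1\cup C_2$ to equal the complete intersection $Y$ of type $m\times n$; this is ACM, and since $d_1+d_2=\deg Y=mn$ and $h_{m,n}$ is the $h$-vector of largest genus among those of degree $mn$ and initial degree $m$ (Remark~\ref{remG_CM} and \cite[Theorem 2.7]{GP}), we get $p_a(C_1\cup C_2)=p_a(Y)=G_{CM}(mn,m)$. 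For a competitor $C'=C_1'\cup C_2'$ with general plane section $Z'$, the linkage relation $h_2=h_{m,n}-h_1(m+n-2-\,\cdot\,)$ together with $h_{m,n}\le m$ forces $h_2\le m$, hence $s_2\le m=s_1$ and $\max(s_1,s_2)=m$. Now apply Theorem~\ref{mainthm} to $C'$: if $\lambda_{Z'}$ has no gap, part (a) of that theorem gives immediately $p_a(C')\le G_{CM}(mn,\max(s_1,s_2))=G_{CM}(mn,m)=p_a(Y)$, which is what we want.

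There remains the case in which $\lambda_{Z'}$ has a gap. By Corollary~\ref{lambdawithgap}, $\lambda_{C_1}$ and $\lambda_{C_2}$ are then exactly the two blocks of $\lambda_{Z'}$ on either side of the gap, and $s(C')=s_1+s_2$. Writing the gap inequality out with the formulas of Remark~\ref{d-g-lambda} and using $b_i\ge s_i-1$ rules out $s_1\ne s_2$, so $s_1=s_2=m$; here Theorem~\ref{mainthm}(b) gives $\#(C_1'\cap C_2')\le\min(d_1s_2,d_2s_1)$. On the other hand, applying Davis's Theorem~\ref{davis} to the ACM curve $D_0$ with biliaison type $\lambda_{C_1}\cup\lambda_{C_2}$ identifies $\min(d_1s_2,d_2s_1)+p_a(C_1)+p_a(C_2)-1$ with $p_a(D_0)$. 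So the whole matter reduces to the purely numerical inequality $p_a(D_0)\le p_a(Y)$, comparing the genus of a degree-$mn$ curve whose biliaison type has a gap with that of the complete intersection of the same degree and initial degree $m$. I expect this comparison to be the main obstacle: it has to be carried out directly, by expanding the genus formula of Remark~\ref{d-g-lambda} on both sides in terms of $m,n$ and the $k_i$, using $\sum k_i=mn$ and $\mathrm{reg}(C_1)+3\le\min\lambda_{C_2}$ — note that the naive estimate $p_a(D_0)\le G_{CM}(mn,s(D_0))$ is false, since $D_0$ is not of decreasing type. Finally, in either case equality throughout forces $p_a(C')$ to equal the ACM value for $h_{Z'}$, so $C'$ is ACM by Lemma~\ref{firstbound} (recovering Lemma~\ref{ACM}).

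For (b), set $\deg X=m=s(h_1)$ and split according to whether the degree hypothesis of Proposition~\ref{C'_smooth} is available for cutting out the second curve. If $m\le t_2+1$: by Proposition~\ref{S-smooth} there is a smooth ACM curve $C_1$ with $h$-vector $h_1$ on a surface $X$ of degree $m$ with isolated singularities; since $h_1,h_2$ are linked by $h_{m,n}$, the residual curve $C_2^{(0)}=nH-C_1$ on $X$ is ACM with $h$-vector $h_2$, and by Proposition~\ref{omega-gbgs} together with $m\le t_2+1$ the sheaf $\omega_{C_2^{(0)}}(4-m)$ is generated by global sections; then from $0\to\mathcal{O}_X\to\mathcal{O}_X(C_2^{(0)})\to\omega_{C_2^{(0)}}(4-m)\to 0$ (and $\Hl^1(\mathcal{O}_X)=0$) the system $|nH-C_1|$ is base point free on $X$, so a general member $C_2$ is smooth (characteristic-zero Bertini) and misses the finitely many singular points of $X$, hence is a smooth irreducible (being ACM, so connected) curve. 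If instead $m>t_2+1$, so in particular $m\ge t_2$, I reverse the roles: build $C_2$ smooth with $h$-vector $h_2$ by Proposition~\ref{S-smooth}, transport it onto a surface $X$ of degree $m$ with isolated singularities using Proposition~\ref{technicalresult} (legitimate since $m\ge t_2$), and then obtain $C_1$ as a general, hence smooth irreducible, member of $|nH-C_2|$ on $X$, where the needed global generation of $\omega_{C_1^{(0)}}(4-m)$ is automatic from $m\le t_1+1$ (as $t_1\ge s_1=m$); the smoothness along the singularities of $X$ needed to run Bertini is arranged exactly as in the proofs of \S5, by choosing the first curve suitably general. In either case $C_1+C_2\sim nH$ on $X$, so $C_1\cup C_2$ is a complete intersection $m\times n$ and $C_1,C_2$ are linked by it, as required.
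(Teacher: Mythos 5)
Part (a) of your proposal has a genuine gap, and you flag it yourself. The no-gap case is fine and is exactly the paper's argument (the deduction $s_2\le m$ from the linkage formula, then Theorem \ref{mainthm}(a)). In the gap case you correctly reduce, via Corollary \ref{lambdawithgap}, Theorem \ref{davis} and Lemma \ref{genus-union}, to the purely numerical inequality $p_a(D_0)\le p_a(Y)$, where $D_0$ is an ACM curve with biliaison type $\lambda_{C_1}\cup\lambda_{C_2}$ (having a gap) and $Y$ is the complete intersection $m\times n$; but this inequality is the entire content of the gap case and you leave it unproved. The paper closes it with a short argument you are missing: both $h_{D_0}$ and $h_{Y}$ can be viewed as $h_1$ increased by a total of $\deg(C_2)$ spread over various degrees; in the gap configuration all of the added amounts sit in degrees $<t_1$, whereas for the complete intersection they all sit in degrees $\ge t_1$; since $p_a=\sum(i-1)c_i$ weights degree $i$ by $i-1$, the complete intersection has the larger genus. (One first observes, via Lemma \ref{tec111}, that only the gap condition $t_1-s_1\ge b_2+3$ can occur.) Also, your side claim that the gap case forces $s_1=s_2$ is false: $h_1=1\ 2\ 3\ 3\ 3\ 3\ 3$ and $h_2=1\ 2$ are linked by $3\times 7$, have $s_1=3\ne 2=s_2$, and $\lambda_{C_1}\cup\lambda_{C_2}=\{1,2,5,6,7\}$ has a gap; fortunately nothing else in your argument actually uses that claim.

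Part (b) is essentially correct and is the paper's strategy: build one curve smooth on a surface $X$ of degree $m$ with isolated singularities (Propositions \ref{S-smooth} and \ref{technicalresult}), show the residual system on $X$ is base point free, and apply the characteristic-zero Bertini theorem. The paper always constructs $C_2$ first and takes $C_1$ general in $|nH-C_2|$, using Lemma \ref{tec111} to get $t_2\le m$ when $s_2<s_1$ and $b_2\le n-2$, so that ${\mathcal I}_{C_2}(n)$, and hence ${\mathcal O}_X(nH-C_2)$, is globally generated; your two cases accomplish the same thing with the roles of the curves interchanged in one of them.
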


\begin{proof} (a) First note that since $C_1\cup C_2$ is a complete intersection, its genus is equal to the bound $G_{CM}(mn,m)$. Now according to Theorem \ref{mainthm}, if $C$ is any union of $C_1$ and $C_2$, then $p_a(C)\le G_{CM}(mn,m)$ unless possibly the lambda character $\lambda _Z$ has a gap. The condition for having a gap (see Remark \ref{ConditionGap}) is that for some choice of $i,j=1,2$, $i\ne j$, we have $t_i-s_i\ge b_j+3$.

If $s_2<s_1$, then by Lemma \ref{tec111}, $b_2<b_1$, so there is only the possibility $t_1-s_1\ge b_2+3$. If $s_1=s_2$, then by interchanging $C_1$ and $C_2$ if necessary we may still assume that $t_1-s_1\ge b_2+3$.

Now compare the $h$-vector corresponding to the union $C=C_1\cup C_2$ where $\lambda _Z$ has a gap to the $h$-vector of the complete intersection $m\times n$. Both can be regarded as being $h_1$ increased by a total of $\deg(C_2)$ in various degrees. In case $\lambda _Z$ with a gap, the terms of $h_2$ are added to those of $h_1$ in degrees $<t_1$. In the case of the complete intersection, they are all added in degrees $\ge t_1$. So clearly the genus of the latter is greater. Hence the complete intersections realize the maximum intersection of $C_1$ and $C_2$.

(b) Given $h_1$ and $h_2$ linked by a complete intersection $m\times n$ with $m=s_1\le n$, we use Proposition \ref{S-smooth}
to show the existence of a smooth ACM curve $C_2$ lying on a surface $X_{s_2}$ with isolated singularities.

If $s_2<s_1$, then $t_2\le m$ by Lemma \ref{tec111}. So by Proposition \ref{technicalresult}, $C_2$ also lies on a surface $X_m$ of degree $m$ with isolated singularities. If $s_2=s_1$, then $C_2$ already lies on such a surface.

Now by Lemma \ref{tec111}, $b_2\le n-2$. But $b_2+1$ is the regularity, so ${\mathcal I}_{C_2}(m)$ is generated by global sections in $\PP^3$. Since ${\mathcal I}_{C_2}(m)$ maps surjectively to ${\mathcal I}_{C_2,X}(m)$, this later also is globally generated by global sections. But this is isomorphic to ${\mathcal O}_X(nH-C_2)$ on $X$. By the characteristic 0 Bertini theorem, there is a smooth curve $C_1$ in  the linear system $|nH-C_2|$. Then $C_1$ and $C_2$ are the required curves.
\end{proof}

\begin{remark} \rm  Since we are looking for curves with maximum intersection, we restrict to the case  $m=s(C_1)$. Curves linked on surfaces of degrees $m>s_1,s_2$ may have maximum intersection on that surfaces, but not maximum intersection for those $h$-vectors. For example, $h_1:1 \ 2 \ 1$ and $h_2: 1 \ 2 \ 2$ are linked by $h_{3,3}: 1 \ 2 \ 3 \ 2 \ 1 $, but their maximum intersection is achieved on a quadric surface, with union $h: 1 \ 2 \ 2\ 2 \ 2$.
\end{remark}

We can now use Theorem \ref{thm62} to prove a conjecture of the second author with Ranestad \cite[Conjecture 4.5 (a)]{MR-R}.

\begin{theorem} \label{conjectureMR-R}
For any positive integers $s\le t$ there exist smooth ACM curves $C_s$ and $C_t$ with $h$-vectors $1 \ 2 \ \cdots s$ and $1 \ 2 \ \cdots \ t$, lying on a surface  $X_t$ of degree $t$ with isolated singularities, and meeting transversally in
$$\#(C_s\cap C_t)= {s+1\choose 2}t-{s+1\choose 3}$$
points. This is the maximum possible intersection for ACM  curves with these $h$-vectors.
\end{theorem}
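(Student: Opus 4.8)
The plan is to apply Theorem~\ref{thm62} to the two specific $h$-vectors $h_1 = \{1,2,\dots,s\}$ and $h_2 = \{1,2,\dots,t\}$ (both of decreasing type, being strictly increasing all the way down to their last term), after first verifying that these two $h$-vectors are linked by a suitable complete intersection $m \times n$ with $m = s(h_1) \le n$. The curve with $h$-vector $\{1,2,\dots,s\}$ is the generic ACM curve on a surface of degree $s$ of appropriate high degree; its initial degree is $s(h_1) = s$ since $h_1(s-1) = s \ge s$ but... actually $h_1(s-1) = s$ is the last nonzero value, so $b_1 = s-1$, and indeed $s(h_1)$ equals the first $n$ with $h_1(n) < n+1$, which is $n = s-1$ if we read the convention carefully --- here I must be slightly careful: $h_1 = \{1,2,\dots,s\}$ means $h_1(n) = n+1$ for $0 \le n \le s-1$ and $0$ afterward, so $s(h_1) = s$ by Proposition~\ref{deg-genus} only if $h_1(s) = 0 < s+1$ and $h_1(n) = n+1$ for $n < s$; hence $s(h_1) = s$. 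Good. So set $m = s$. For the linkage, I need a complete intersection $m \times n$ such that $h_2(\ell) = h_{m,n}(\ell) - h_1(m+n-2-\ell)$; a direct computation with the triangular shape of $h_{m,n}$ shows that taking $n = t$ (so the complete intersection is $s \times t$) produces exactly $h_2 = \{1,2,\dots,t\}$ as the residual. This is a routine but essential check, and it is where I expect the bookkeeping to require the most care.

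Granting the linkage, Theorem~\ref{thm62}(b) immediately yields smooth ACM curves $C_s$ and $C_t$ with these $h$-vectors lying on a common surface $X$ of degree $m = s$ with isolated singularities and satisfying $C_t \sim tH - C_s$ on $X$ --- but the statement asks for them on a surface $X_t$ of degree $t$, not degree $s$. To get there I would use Proposition~\ref{technicalresult}: since $C_s \cup C_t$ (or the ambient complete intersection curve) lies on $X$ of degree $s = s(C)$ with isolated singularities and $t \ge t(C)$ for the relevant union, the configuration also sits on a surface of degree $t$ with isolated singularities. Alternatively, I can link on the degree-$s$ surface but then re-embed the whole complete intersection into a degree-$t$ surface and note that the linear systems $|C_s|$, $|C_t|$ there still contain smooth members by the characteristic-zero Bertini theorem (as in the proof of Proposition~\ref{C'_smooth}). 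Either way, the curves meet transversally because a general member of a base-point-free-off-the-singularities linear system meets a fixed smooth curve transversally.

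For the numerical value, the intersection number is forced by Lemma~\ref{genus-union} together with the genus formulas: since $C_s \cup C_t$ is a complete intersection $s \times t$, its arithmetic genus is $p_a(C_s \cup C_t) = \binom{s+t-2}{2}$ minus the usual correction --- concretely $p_a = \tfrac12 st(s+t-4)+1$ --- while $p_a(C_s)$ and $p_a(C_t)$ are read off from $\sum_{i=2}^{b}(i-1)c_i$ applied to $1,2,\dots,s$ and $1,2,\dots,t$ respectively. Subtracting via $\#(C_s \cap C_t) = p_a(C_s \cup C_t) - p_a(C_s) - p_a(C_t) + 1$ and simplifying gives $\binom{s+1}{2}t - \binom{s+1}{3}$; this is a short algebraic identity once the three genera are in hand. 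Finally, maximality: Theorem~\ref{thm62}(a) states precisely that the complete-intersection union realizes the maximum intersection among integral ACM curves with these $h$-vectors, so the bound is attained and optimal. The main obstacle, then, is purely the verification that $\{1,2,\dots,s\}$ and $\{1,2,\dots,t\}$ are genuinely linked by $s \times t$ with the initial-degree hypothesis $m = s_1 \le n$ satisfied --- everything else is an application of the machinery already in place.
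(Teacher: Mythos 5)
Your strategy collapses at the step you yourself flagged as ``where the bookkeeping requires the most care'': the $h$-vectors $h_1=1\,2\,\cdots\,s$ and $h_2=1\,2\,\cdots\,t$ are \emph{not} linked by a complete intersection $s\times t$. A degree count already rules this out: linkage by $m\times n$ forces $\deg C_1+\deg C_2=mn$, but here $\deg C_s+\deg C_t=\binom{s+1}{2}+\binom{t+1}{2}$, and
$$\binom{s+1}{2}+\binom{t+1}{2}-st=\tfrac{1}{2}\bigl((s-t)^2+s+t\bigr)>0,$$
so the two curves do not even fit inside an $s\times t$ complete intersection. Concretely, the residual of $1\,2\,\cdots\,s$ under $h_{s,t}$ is $1\,2\,\cdots\,s\,s\,\cdots\,s$ (a plateau of $s$'s out to degree $t-2$), not $1\,2\,\cdots\,t$. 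Since the single application of Theorem \ref{thm62} is the engine of your whole argument --- it is supposed to deliver both the existence of the smooth pair on a common surface and, via part (a), the maximality --- the proof does not go through as written. (Your remaining ingredients are fine: the genus bookkeeping via Proposition \ref{deg-genus} and Lemma \ref{genus-union} does yield $\binom{s+1}{2}t-\binom{s+1}{3}$ once one knows the union has the right $h$-vector, and the passage from a degree-$s$ to a degree-$t$ surface via Proposition \ref{technicalresult} is the right instinct.)

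The paper repairs exactly this point by an induction on $t-s$. In the base case $s=t$ the two curves \emph{are} linked, but by $s\times(s+1)$ (check: $2\binom{s+1}{2}=s(s+1)$, and the residual of $1\,2\,\cdots\,s$ in $h_{s,s+1}$ is again $1\,2\,\cdots\,s$), so Theorem \ref{thm62} applies there. For $t>s$ one takes the inductively constructed pair $C_s$, $C_{t-1}$ whose union has $h$-vector $1\,2\,\cdots\,t-1\,s\,\cdots\,2\,1$, places that union on a surface $X_t$ of degree $t$ with isolated singularities via Proposition \ref{technicalresult}, and produces $C_t\sim C_{t-1}+H$ smooth on $X_t$ by Proposition \ref{C'_smooth}; maximality then comes not from Theorem \ref{thm62}(a) but from Theorem \ref{mainthm}, because the union's $h$-vector $1\,2\,\cdots\,t\,s\,\cdots\,2\,1$ realizes $G_{CM}(d_s+d_t,t)$ (and the gap case of Theorem \ref{mainthm}(b) is excluded since $t_i-s_i=0$ for both curves). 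If you want to salvage your write-up, replace the false $s\times t$ linkage with this induction.
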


\begin{proof}
Note: the existence of smooth ACM curves $C_s$, $C_t$ and the computation of the intersection number was done by another method in \cite{MR-R}. The proof of the maximality is new here.

We proceed by induction  on $t-s\ge 0$. If $s=t$, then $C_s$ and $C_t$ are linked by $s\times (s+1)$, so the result is a consequence of Theorem \ref{thm62}.
The computation of the intersection number, for any $s$, $t$, is straightforward using Proposition \ref{deg-genus} and Lemma \ref{genus-union}.

Now suppose $t>s$. By induction there exist smooth ACM curves $C_s$ and $C_{t-1}$ lying on a surface $X_{t-1}$ as above, with $h$-vectors
$ 1 \ 2 \ \cdots s$ and $1 \ 2 \ \cdots \ t-1 $, and such that $C_s\cup C_{t-1}$ has $h$ vector $1 \ 2 \ \cdots \ t-1 \ s \ \cdots \ 2 \ 1 $. Then by Proposition
 \ref{technicalresult}, the union $C_s\cup C_{t-1}$ also lies on a surface $X_t$ of degree $t$ with isolated singularities. By Proposition \ref{C'_smooth} there is a smooth ACM curve $C_t\thicksim C_{t-1}+H$ on $X_t$. Then $C_s$ and $C_t$ have the required properties.

 Since the $h$-vector  $1 \ 2 \ \cdots \ t \ s \ \cdots \ 2 \ 1$ realizes the maximum genus $G_{CM}(d_s+d_t,t)$, the intersection is maximum, by Theorem \ref{mainthm}.
 \end{proof}

\begin{remark} \rm By an analogous  method we can find the maximum number of intersection points of  two smooth ACM curves $C_t^d$ and $C_s^d$ in $\PP^3$ defined by matrices with entries forms of degree $d$ proving Conjecture 4.5(b) of \cite{MR-R}.
\end{remark}

\begin{proposition} \label{Tlinked}
If $C_1$ and $C_2$ are ACM curves linked as in Theorem \ref{thm62}, or if they are $C_s$ and $C_t$  as in Theorem \ref{conjectureMR-R}, then the intersection $T=C_1\cap C_2$ is of the form $rH-K$ on $C_1$ or $C_2$, and hence is an arithmetically Gorenstein zero-scheme in $\PP^3$.
\end{proposition}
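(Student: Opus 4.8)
The plan is to argue in two stages. In the first, and main, stage I would show that $T$ is a divisor of the stated type on one of the two curves, i.e. that $\mathcal I_{T,C_i}\cong\omega_{C_i}(-r)$ for a suitable $i\in\{1,2\}$ and $r\in\ZZ$; in the second stage I would deduce from this that $T$ is arithmetically Gorenstein. Let me dispose of the second stage first. Suppose $Z\subset C$ is a zero-scheme on an ACM curve $C\subset\PP^3$ with $\mathcal I_{Z,C}\cong\omega_C(-r)$. Applying $\Gamma_*$ — and using that for an ACM curve the canonical module $K_{R_C}$ is maximal Cohen-Macaulay (so $\Gamma_*(\omega_C)=K_{R_C}$) while the ideal $I_Z/I_C$ of $Z$ in $R_C$ has no $\mathfrak{m}$-torsion and vanishing $H^1_{\mathfrak{m}}$ (by depth $R_C=2$ and saturation of $I_Z$) — one gets an isomorphism of graded $R_C$-modules $I_Z/I_C\cong K_{R_C}(-r)$, hence a short exact sequence $0\to K_{R_C}(-r)\stackrel{\iota}{\to}R_C\to R_Z\to 0$. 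Now take the Hilbert--Burch resolution $0\to F_1\to F_0\to R\to R_C\to 0$ of $R_C$ and its transpose-dual $0\to R(-4-r)\to F_0^{\vee}(-4-r)\to F_1^{\vee}(-4-r)\to K_{R_C}(-r)\to 0$, lift $\iota$ to a map of these complexes, and pass to the mapping cone. This gives a free resolution of $R_Z$,
$$0\to R(-4-r)\to F_0^{\vee}(-4-r)\oplus F_1\to F_1^{\vee}(-4-r)\oplus F_0\to R\to R_Z\to 0,$$
and the two resolutions fit together so that, up to the twist by $-(r+4)$, the mapping cone is self-dual; equivalently a direct inspection of the twists shows $\Hom(-,R(-4))$ carries this complex isomorphically onto itself twisted by $r$. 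Therefore $\mathrm{Ext}^3_R(R_Z,R(-4))\cong R_Z(r)$, i.e. $R_Z$ is Gorenstein and $Z$ is arithmetically Gorenstein. (This is a known property of divisors of class $rH-K$ on ACM curves and could instead be quoted.)

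So the work is to produce the isomorphism $\mathcal I_{T,C_i}\cong\omega_{C_i}(-r)$ in each of the two situations. First suppose $C_1$ and $C_2$ are linked by a complete intersection $V$ of type $m\times n$ (this covers Theorem~\ref{thm62}, and also the case $s=t$ of Theorem~\ref{conjectureMR-R}). Then $C:=C_1\cup C_2=V$ is arithmetically Gorenstein with $\omega_C\cong\mathcal O_C(m+n-4)$, so $\mathcal O_C\cong\omega_C(4-m-n)$, and by the definition of linkage $\mathcal I_{C_2,C}=(0:_{\mathcal O_C}\mathcal I_{C_1,C})=\mathcal{H}om_{\mathcal O_C}(\mathcal O_{C_1},\mathcal O_C)\cong\mathcal{H}om_{\mathcal O_C}(\mathcal O_{C_1},\omega_C(4-m-n))\cong\omega_{C_1}(4-m-n)$, the last step by Grothendieck duality for the closed immersion $C_1\hookrightarrow C$. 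Since $\mathcal I_{C_2,C}$ is an ideal of $\mathcal O_{C_1}$ with quotient $\mathcal O_{C_1\cap C_2}=\mathcal O_T$, it equals $\mathcal I_{T,C_1}$; hence $r=m+n-4$ works on $C_1$, and symmetrically on $C_2$. (Equivalently, since by Theorem~\ref{thm62}(b) both curves lie on a surface $X$ of degree $m$ with $C_1+C_2\sim nH$, adjunction on $X$ gives $T=C_1\cdot C_2\sim(m+n-4)H-K_{C_1}$.)

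Now let $C_1=C_s$ and $C_2=C_t$ be as in Theorem~\ref{conjectureMR-R} with $t>s$, and argue by induction on $t-s$, the base $t=s$ being the linked case above. Recall from the proof of Theorem~\ref{conjectureMR-R} that $C_s$ and $C_{t-1}$ lie on a surface $X_t$ of degree $t$ with isolated singularities and that $C_t$ is chosen in the linear system $|C_{t-1}+H|$ on $X_t$; by the inductive hypothesis $\mathcal I_{C_s\cap C_{t-1},\,C_s}\cong\omega_{C_s}(4-s-t)$. Choosing $X_t$ so that its finitely many singular points avoid the finite set $C_s\cap C_{t-1}$, and then choosing $C_t$ general in $|C_{t-1}+H|$ (a system globally generated away from $\mathrm{Sing}(X_t)$), we may assume $C_s$ meets $C_{t-1}$ and $C_t$ transversally and only at smooth points of $X_t$. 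Restricting the linear equivalence $\mathcal O_{X_t}(C_t)\cong\mathcal O_{X_t}(C_{t-1})(1)$ to $C_s$ then gives $\mathcal O_{C_s}(C_s\cap C_t)\cong\mathcal O_{C_s}(C_s\cap C_{t-1})(1)$, i.e. $\mathcal I_{T,C_s}\cong\mathcal I_{C_s\cap C_{t-1},\,C_s}(-1)\cong\omega_{C_s}(3-s-t)$; so $r=s+t-3$ works on $C_s$, which completes the induction and, with the first stage, the proof.

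The main obstacle is the geometric bookkeeping in this last paragraph: one must be certain that the scheme-theoretic intersections $C_s\cap C_t$ and $C_s\cap C_{t-1}$ in $\PP^3$ coincide with the restrictions to $C_s$ of the divisorial sheaves $\mathcal O_{X_t}(C_t)$ and $\mathcal O_{X_t}(C_{t-1})$ on the possibly singular surface $X_t$. This requires $C_s$ to stay away from $\mathrm{Sing}(X_t)$ at the intersection points, so that those sheaves are invertible there and the intersections are Tor-independent on $X_t$; and that is possible only because $\mathrm{Sing}(X_t)$ is a finite set which, by the Bertini construction of $X_t$, may be placed at general points of the components of the relevant complete intersection, in particular off the finitely many points of $C_s\cap C_{t-1}$, after which transversality of $C_t$ with $C_s$ is an ordinary Bertini argument. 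Granting this, the remaining computations (tracking the twists in the mapping-cone duality, and the adjunction formula) are routine.
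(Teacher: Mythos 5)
Your proposal is correct and follows essentially the same route as the paper: the paper simply cites the well-known facts that linked ACM curves meet in a divisor of class $rH-K$ and that such a divisor is arithmetically Gorenstein (Migliore, Prop.\ 4.2.1), and then observes that the inductive step of Theorem \ref{conjectureMR-R} is an ascending biliaison replacing $r$ by $r+1$ --- which is exactly your restriction of $\mathcal{O}_{X_t}(C_t)\cong\mathcal{O}_{X_t}(C_{t-1})(1)$ to $C_s$. You have merely supplied the proofs of the quoted facts (linkage duality and the mapping-cone/Hilbert--Burch argument), and these are carried out correctly.
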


\begin{proof}
It is well known that the intersection of linked ACM curves has the form $rH-K$ and is arithmetically Gorenstein. (See for example \cite[Proposition 4.2.1]{Mi} or \cite[Proposition 1.3.7]{M}). In the second case, we have only to observe that in the induction step, $C_s\cap C_t$ is obtained by one ascending biliaison on $C_s$ from $C_s\cap C_{t-1}$, hence is again of the form $rH-K$ with $r$ replaced by $r+1$.
\end{proof}


\section{Ordinary ACM curves}

In this section, we will determine the maximum number of intersection points of ACM curves with certain classes of $h$-vectors (the so-called {\em ordinary} $h$-{\em vectors}), and we will prove the existence of irreducible nonsingular {\em ordinary} ACM curves realizing these intersections. To use the results of section 5 we need to consider curves on integral surfaces that may have a finite number of singular points. We assume $char(k)=0$ throughout this section.

\begin{definition}\label{def_ordinary} \rm
 Following the terminology introduced by Gruson et al. in \cite{GHL}, we will say that an ACM curve $C\subset \PP^3$
 (resp. its $h$-vector)  is {\em ordinary} if its $h$-vector is the $h$-vector of a set of general points in $\PP^2$, which means that its $h$-vector must be of  the  form $1,2,\cdots , s-1, s, a $ for $s\ge 1$ and $0\le a\le s$.
\end{definition}

\begin{theorem}\label{mainthmordinarycurves} Suppose given two ordinary $h$-vectors $h_1$ and $h_2$ with the same initial degree $s$, say $h_1=1,2, \cdots,s,a$ and $h_2=1,2,\cdots,s,b$. Then there exist smooth ACM curves $C_1$ and $C_2$ in $\PP^3$, meeting transversally, and having the maximum possible number of intersection points for ACM curves with the given $h$-vectors $h_1$ and $h_2$. Furthermore, the union $C_1\cup C_2$ is ACM and is contained in a surface $X$ of degree $s$ with isolated singularities.

If we  restrict the problem by requiring that the union of the two curves be contained in an irreducible surface of degree $s+1$, then there are other smooth ACM curves $C_1'$ and $C_2'$ with $h$-vectors $h_1$ and $h_2$, and having the maximum possible number of intersection points for curves with these $h$-vectors and subject to the above restriction. Again the union $C_1'\cup C_2'$ is ACM and is contained in a surface $X'$ of degree $s+1$  with isolated singularities.

Furthermore, in both cases the 0-dimensional schemes $T=C_1\cap C_2$ and $T'=C_1'\cap C_2'$ are strongly glicci, namely, they can be obtained by ascending Gorenstein biliaisons from complete intersections.

The $h$-vectors $h_3$ of $C_1\cup C_2$ and $h_3'$ of $C_1'\cup C_2'$, from which one can compute the actual number of intersection points, are given as follows (let $c=a+b$):

\begin{itemize}
\item[(i)] If $c=0$, then $h_3=h_3'=1 \ 2 \ \cdots \ s-1 \ s \ s \ s-1 \ \cdots \ 2 \ 1 $
\item[(ii)] If $0<c\le s$, then $h_3= 1 \ 2 \ \cdots \ s-1 \ s \ s \ s\ s-1 \cdots \ \widehat{s-c} \ \cdots \ 2 \ 1$, and

    \hskip 3cm $h_3'= 1 \ 2 \ \cdots \ s-1 \ s \ s+1 \ s\ s-1 \cdots \ \widehat{s+1-c} \ \cdots \ 2 \ 1$.
\item[(iii)] If $s<c\le 2s$, then $h_3= 1 \ 2 \ \cdots \ s-1 \ s \ s \ s \ s\ s-1 \cdots \ \widehat{2s-c} \ \cdots \ 2 \ 1$, and

    \hskip 3cm $h_3'= 1 \ 2 \ \cdots \ s-1 \ s \ \ s+1 \ s+1 \ s\ s-1 \cdots \ \widehat{2s+2-c} \ \cdots \ 2 \ 1$
\end{itemize}
where $\widehat{x}$ means omit the number $x$.
\end{theorem}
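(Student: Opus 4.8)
The plan is to prove the two halves of the statement — the unrestricted case (curves $C_1,C_2$ on a surface of degree $s$) and the restricted case (curves $C_1',C_2'$ on an irreducible surface of degree $s+1$) — in parallel, in each case establishing: (1) an upper bound on $p_a(C_1\cup C_2)$, hence on $\#(C_1\cap C_2)$ via Lemma \ref{genus-union}; (2) the existence of smooth ACM curves attaining it; and (3) the structural claims about $T$ being strongly glicci and the explicit $h$-vectors $h_3, h_3'$.

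For the unrestricted case, the key observation is that since $h_1,h_2$ have the same initial degree $s$, a general hyperplane section $Z$ of $C=C_1\cup C_2$ is a union of two subschemes of $\mathbb P^2$ each with $h$-vector of the form $1,2,\dots,s,\ast$, so $\lambda_Z$ cannot have a gap (a gap would force $k_{t+1}\ge k_t+3$, impossible when $s_1=s_2=s$ by Corollary \ref{lambdawithgap}). Hence Theorem \ref{mainthm}(a) applies and $p_a(C)\le G_{CM}(d_1+d_2,s)$. To see this bound is attained, I would write down the $h$-vector $h_3$ of degree $d_1+d_2 = 2\sum_{i=0}^{s-1}(i+1) + a + b$ of decreasing type with initial degree $s$ that maximizes the genus; a direct check using the genus formula of Proposition \ref{deg-genus} (pushing the mass $c=a+b$ into the highest available degrees, with the constraint that an $h$-vector of decreasing type cannot exceed the "staircase" $1,2,\dots,s,s,\dots$) gives exactly the $h_3$ listed in cases (i)–(iii). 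Then I would invoke Proposition \ref{S-smooth} to produce a smooth ACM curve $C'$ with $h$-vector $h_3$ on a surface $X$ of degree $s$ with isolated singularities, and exhibit $C_1,C_2$ as members of suitable linear systems on $X$ whose union is $C'$: concretely, $C_1$ and $C_2$ are obtained from $C'$ by the Davis-type decomposition applied on the level of the surface, or more directly by realizing $C'$ as built up by biliaisons on $X$ and splitting the biliaison sequence — one checks the two pieces have $h$-vectors $h_1,h_2$. Equality in Theorem \ref{mainthm}(a) then forces $C_1\cup C_2$ to be ACM, and a Bertini argument as in Proposition \ref{C'_smooth} makes $C_1,C_2$ smooth and meeting transversally.

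For the restricted case, where the union must lie on an irreducible surface $X'$ of degree $s+1$, the governing bound is $G_{CM}(d_1+d_2, s+1)$ rather than $G_{CM}(d_1+d_2,s)$: any ACM curve on an irreducible surface of degree $s+1$ has initial degree $\ge s+1$ unless... but here the $h$-vector of $C_1\cup C_2$ still has initial degree $s$ as a curve, so the relevant constraint is that its $h$-vector, while of initial degree $s$, must also be realizable by a curve sitting inside an irreducible $X'_{s+1}$; the extra room in degree $s$ (the value can now reach $s+1$ there) is what produces the shifted $h_3'$. I would make this precise by noting that for a curve on an irreducible surface of degree $s+1$ the $h$-vector, read past degree $s$, is still of decreasing type but the "cap" is $1,2,\dots,s,s+1,s+1,\dots$; maximizing the genus subject to this cap and total degree $d_1+d_2$ yields the $h_3'$ of cases (i)–(iii). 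Existence again comes from Proposition \ref{S-smooth}/\ref{technicalresult}/\ref{C'_smooth}: take the curve of the unrestricted case, push it onto a surface of degree $s+1$ by Proposition \ref{technicalresult}, and do one ascending biliaison of height one on $X'_{s+1}$; then split off $C_1',C_2'$ as before. The strongly-glicci assertion for $T$ and $T'$ follows because in each case $C_1\cup C_2$ (resp. $C_1'\cup C_2'$) is built from one of its subcurves by an ascending Gorenstein biliaison on the surface, so by the argument of Proposition \ref{Tlinked} the intersection scheme $T$ has the form $rH-K$ and is obtained from a complete intersection of points by ascending Gorenstein biliaisons.

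The main obstacle I anticipate is step (2)–(3) combined: verifying that the combinatorially-optimal $h$-vectors $h_3, h_3'$ are actually realizable by a union $C_1\cup C_2$ with $C_1,C_2$ having the prescribed $h$-vectors $h_1,h_2$ (and meeting transversally) — i.e. that the abstract genus bound is achieved by curves of the \emph{correct shape}, not merely by some ACM curve of that degree and initial degree. This requires carefully tracking how the biliaison construction on the surface $X$ (or $X'$) distributes the degree of $C_2$ among the degrees of the $h$-vector and checking that the split into $h_1,h_2$ is consistent in each of the regimes $c=0$, $0<c\le s$, $s<c\le 2s$; the case analysis for the position of the omitted entry $\widehat{x}$ is where the bookkeeping is most delicate. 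The subsidiary point that $\lambda_Z$ genuinely has no gap, so that Theorem \ref{mainthm}(a) (and not the weaker (b)) is the operative bound, also needs the remark that $s_1=s_2$ rules out the dichotomy, which is immediate from Remark \ref{ConditionGap}.
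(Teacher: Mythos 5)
Your maximality argument matches the paper's: the listed $h$-vectors realize $G_{CM}(d_1+d_2,s)$ (resp.\ $G_{CM}(d_1+d_2,s+1)$), the no-gap hypothesis is checked via Remark \ref{ConditionGap} (for ordinary curves $t_i-s_i\le 1$ while $b_j+3\ge s+2$, so a gap is impossible), and Theorem \ref{mainthm} plus Lemma \ref{genus-union} give the bound. That part is fine. The genuine gap is in your existence step, which is the actual content of the theorem. You propose to produce a single smooth ACM curve $C'$ with $h$-vector $h_3$ via Proposition \ref{S-smooth} and then ``exhibit $C_1,C_2$ as members of suitable linear systems on $X$ whose union is $C'$,'' invoking ``the Davis-type decomposition'' or ``splitting the biliaison sequence.'' This mechanism does not work: a curve produced by Proposition \ref{S-smooth} is smooth and irreducible, hence admits no decomposition into two curves; and Davis's theorem (Theorem \ref{davis}) decomposes an ACM curve only when its biliaison type has a \emph{gap}, which is precisely what you (correctly) ruled out. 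Splitting the biliaison ladder of some reducible representative of $h_3$ gives no control over the individual $h$-vectors of the pieces, so there is no reason the summands would be $h_1$ and $h_2$. You flag this as ``the main obstacle,'' but you do not resolve it, and it is the theorem.

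What the paper actually does is a double induction on $s$ that constructs $C_1$ and $C_2$ \emph{simultaneously}, interleaving the restricted and unrestricted statements: the unrestricted case at level $s$ is obtained from the \emph{restricted} case at level $s-1$ (whose union already sits on a surface $X$ of degree $s$ with isolated singularities) by taking $C_i\in|D_i+H|$ on $X$, smooth by Proposition \ref{C'_smooth}; the restricted case at level $s$ is obtained from level $s-1$ applied to the modified pair $(a,b-1)$, adding the two hyperplane sections in a staggered way --- first $C_1\in|D_1+H|$ on $X_s$, then (after pushing $C_1\cup D_2$ onto a surface $X'$ of degree $s+1$ via Proposition \ref{technicalresult}) $C_2\in|D_2+H|$ on $X'$. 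This is also why your sketch of the restricted case fails arithmetically: $h_3$ and $h_3'$ have the same degree, so $C_1'\cup C_2'$ cannot be obtained from $C_1\cup C_2$ by ``one ascending biliaison of height one on $X'_{s+1}$,'' which would raise the degree by $s+1$. The strongly glicci claim for $T$, which you assert via Proposition \ref{Tlinked}, is likewise extracted in the paper from this same induction ($T$ is obtained from $D_1\cap D_2$ by two ascending Gorenstein biliaisons at each step, with base case the arithmetically Gorenstein intersection of linked curves), so without the inductive construction that claim is also unsupported.
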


\begin{proof} We observe at the outset that each of the $h$-vectors $h_3$ and $h_3'$ in the list represents the maximal genus $G_{CM}(d,s)  $ or $G_{CM}(d,s+1)$  for their degree (cf. Remark \ref{remG_CM}), so by Lemma \ref{genus-union} and Theorem \ref{mainthm}, these curves do give the maximum number of intersection points possible, subject to the restriction of being contained in an irreducible surface of degree $s+1$ for $h_3'$.

To show the existence of smooth curves  whose union has these $h$-vectors, we proceed by induction on $s$. If $s=1$, then each of the $h$-vectors $h_1$, $h_2$ is either $1$ or $1 \ 1$. In this case the results are obvious: from $1$ and $1$ we obtain $1 \ 1$, from $1$ and $1 \ 1$ we obtain  $1 \ 1 \ 1$ or $1 \ 2$; from $1 \ 1$ and $1 \ 1$ we obtain $1 \ 1 \ 1 \ 1$ or $1 \ 2 \ 1$. The intersections $T=C_1\cap C_2$  in each case are complete intersections, consisting of $1$, $2$, or $4$ points.

So now let us consider $s\ge 2$, assuming that (i), (ii) and (iii) have been established for $s-1$.

\vskip 2mm
\underline{Case (i):} $c=0$.  In this case $a=b=0$, so the result follows from Theorem \ref{conjectureMR-R}. Note that the intersection $T$ is arithmetically Gorenstein, which in codimension  three implies strongly glicci \cite{MN}.

\vskip 2mm
\underline{Case (ii):} $0<c\le s$. We apply the induction hypothesis to $1 \ 2 \ \cdots \ s-1 \ a$ and $1 \ 2 \ \cdots \ s-1 \ b$. Using the second case $h_3'$ of (ii) (or of (iii) when $a+b=s$) we find smooth ACM curves $D_1$ and $D_2$ with these $h$-vectors,  whose union $D_1\cup D_2$ is ACM, contained in a surface $X$ of degree $s$
with isolated singularities, and with $h$-vector $1 \ 2 \  \cdots \ s-1 \ s \ s-1 \ \cdots \ \widehat{s-c} \ \cdots \ 1$. Let $C_1$ and $C_2$ be general elements of the linear systems $|D_1+H|$ and $|D_2+H|$ on $X$. By Proposition \ref{C'_smooth}, we can take both $C_1$ and $C_2$ smooth, meeting transversally. The $h$-vector of $C_1\cup C_2$ is obtained from that of $D_1\cup D_2$ by adding two hyperplanes $H_X$, giving $h_3= 1 \ 2 \  \cdots \ s \ s \ s \ \cdots \ \widehat{s-c} \ \cdots \ 1$. The intersection $T=C_1\cap C_2$ is obtained by two ascending Gorenstein biliaison from $D_1\cap D_2$, hence is strongly glicci.

For the restricted case, to find $h_3'$, we instead apply  induction hypothesis to  $1 \ 2 \ \cdots \ s-1 \ a $
and  $1 \ 2 \ \cdots \ s-1 \ b-1$ (assuming wlog $a\le b$, hence $b>0$). We find $D_1$ and $D_2$ smooth ACM curves whose union $D_1\cup D_2$ has $h$-vector
 $1 \ 2 \ \cdots \ s-1 \ s  \ s-1 \ \cdots \ \widehat{s+1-c} \ \cdots \ 1$ and is contained in a surface $X$ of degree $s$ with isolated singularities. Now we take $C_1\in |D_1+H_X|$ on $X$ smooth, meeting $D_2$ transversally, and with $C_1\cup D_2$ having $h$-vector $ 1 \ 2 \ \cdots \ s-1 \ s \ s \ s-1 \ \cdots \widehat{s+1-c} \ \cdots \ 1$. We now apply Proposition \ref{technicalresult} to $C_1\cup D_2$ to show that it is contained in a surface $X'$ of degree $s+1$ with isolated singularities. Then take $C_2\in |D_2+H|$ on $X'$. As before $C_1$ and $C_2$ will meet transversally, and their union will be ACM with $h$-vector $h_3'=1 \ 2 \ \cdots \ s \ s+1 \ s  \ \cdots \ \widehat{s+1-c} \ \cdots \ 1$, as required. Again $T=C_1\cap C_2$  is obtained from $D_1\cap D_2$ by two ascending biliaisons.

\vskip 2mm
\underline{Case (iii):} $s<c\le 2s$ is similar. If $a$, $b$ are both $\le s-1$, we apply the induction hypothesis to $1 \ 2\ \cdots \ s-1 \ a$ and $1\ 2 \ \cdots \ s-1 \ b$, obtaining  $D_1$, $D_2$ smooth ACM curves with $D_1\cup D_2$ having $h$-vector $1\ 2 \ \cdots \ s  \ s\ \cdots \widehat{2s-c} \ \cdots \ 1$ in a surface $X$ of degree $s$ with isolated singularities. Adding back $H_X$ twice we obtain $1\ 2 \cdots \ s\ s\ s\ s\cdots \ \widehat{2s-c} \ \cdots \ 1$.

If however $a=s$ and $b\le s-1$ (or vice versa),  we apply the induction hypothesis to $1 \ 2\ \cdots \ s-1 $ and $1\ 2 \ \cdots \ s-1 \ b$.  This put us back in Case (ii), where we find $1 \ 2 \ \cdots \ s \ \cdots \widehat{s-b} \ \cdots \ 1$ in $X$ of degree $s$. Adding back $H_X$ three times, we obtain $1\ 2 \ \cdots \ s\ s\ s\ s \cdots \ \widehat{s-b} \cdots \ 1$. Since $s-b=2s-c$ we are done.

If $a=b=s$, we apply Case (i) already proved above to $1\ 2 \ \cdots \ s$ and $1 \ 2 \ \cdots \ s$ obtaining $1\ 2 \ \cdots \ s\ s \cdots \ 2\ 1$. Then adding back two copies of the hyperplane section $H_X$ we get $1\ 2\ \cdots \ s\ s\ s\ s\ \cdots \ 2\ 1$.

To obtain $h_3'$, we apply induction to  $1\ 2 \ \cdots \ s-1\ a-1$ and $1 \ 2 \ \cdots \ s-1 \ b-1$. This gives $1\ 2 \ \cdots \ s\ s \cdots \ \widehat{2s+2-c} \ \cdots \ 2\ 1$ on $X$ of degree $s$. By Proposition \ref{technicalresult} this is also contained in an $X'$ of degree $s+1$. Then we add back two hyperplane sections in $X'$ to obtain $1\ 2\ \cdots \ s\ s+1\ s+1\ s\ \cdots \widehat{2s+2-c} \ \cdots  \ 2\ 1$.
\end{proof}

\begin{remark} \rm
If we consider ordinary $h$-vectors $h_1$ and $h_2$ with ·$s_1<s_2$, we can always reduce to the case $s_1=s_2$ by successively subtracting hyperplanes from $h_2$. Thus the same methods  as in the Theorem \ref{mainthmordinarycurves}  will produce the maximum  number of intersection points and the $h$-vector $h_3$ of the union. However, the conclusions are more complicated, so we do not state them explicitly here. One difference is that the $h$-vector $h_3$ of the union may no longer represent $G_{CM}(d_1+d_2,s_2)$, so that we need a different argument to prove maximality. Another difference is that the result may depend on $a$ and $b$ individually, and not just their sum $c$. We illustrate these points with some examples.
\end{remark}

\begin{example} \rm
(a) Let $h_1: \ 1$  and $h_2: \ 1 \ 2 \ 3 \ 4 \ 4$. We subtract two copies of $H_4$, and one each of $H_3$ and $H_2$ from $h_2$ to arrive at $h_2': \ 1$. The union of $h_1$ and $h_2'$ is $1 º 1 $. Adding back $H_2$, $H_3$ and two copies of $H_4$ we find for the union $h_3: \ 1 \ 2 \ 3 \ 4 \ 4 \ 1$. This does not represent $G_{CM}(15, 4)$, but it is the maximum number of intersection points, since it comes from one biliaison from $1 \ 2 \ 3 \ 4 \ 1 $ which represents $G_{CM}(11,4)$. This confirms Nollet's theorem (\cite[Corollary 1.6]{N}) that the maximal multisecant to the curve $C_2$ is of order 5.

\vskip 2mm
(b) Let $h_1: \ 1 \ 2 $ and $h_2: \ 1 \ 2 \ 3 \ 4 \ 5 \ 5 $. We reduce to $h_2': \  1  \  2 $. Then the union $h_1$ and $h_2'$ is $1 \ 2 \ 2 \ 1$. Adding back $H_3$, $H_4$ and two copies of $H_5$ we get $h_3: \ 1 \ 2 \  3 \ 4 \ 5 \ 5 \ 2 \ 1 $ for the union. This does not represent $G_{CM}(23,5)$ but it is maximal for the same reason as in (a) above: at the  previous step we have $1 \  2 \ 3 \ 4 \ 5 \ 2 \ 1$ which does represent $G_{CM}(18,5)$.

\vskip 2mm
(c) Let $h_1: \ 1 \ 2 \ 1$ and $h_2;  1 \ 2 \ 3 \ 4 \ 5 \ 4$. We reduce to $h_2': \ 1 \ 2 $. The union of  $h_1$ and $h_2'$ is $1 \ 2 \ 2 \ 2$ according to Theorem \ref{mainthmordinarycurves}. Now we need to add back $H_3$, but this union is not contained in an irreducible cubic surface. Therefore we must use the case $h_3'$ of Theorem \ref{mainthmordinarycurves} (ii) giving the union $1 \ 2 \ 3 \ 1$. Then we can add back $H_3$, two copies of $H_4$ and $H_5$ to get $h_3: \ 1 \ 2 \  3 \ 4 \ 5 \ 4 \ 3 \ 1 $ for the union which  does  represent $G_{CM}(23,5)$ and so  is maximum.

Note that examples (b) and (c) both have $s_1=2$, $s_2=5$ and $a+b=c=5$, however, the answers $h_3$ are different, hence dependent also on $a$ and $b$.

\end{example}


\section{Open problems}

While we have given bounds on the number of intersection points of two ACM curves in $\PP^3$, we have not found the exact maximum in all cases. So there remains

\begin{problem} \rm \begin{itemize}
\item[(a)] Given two $h$-vectors $h_1$ and $h_2$ of decreasing type, find the maximum number of intersection points of two integral ACM curves $C_1$ and $C_2$ with these $h$-vectors.
\item[(b)] When the maximum is realized,
\begin{itemize}
\item[(b1)] is $C_1\cup C_2$ an ACM curve?
\item[(b2)] is $C_1\cup C_2$ contained in an integral surface of least possible degree that could contain $C_1$ and $C_2$?
\item[(b3)] is the intersection $C_1\cap C_2$ strongly glicci?
\end{itemize}
 \end{itemize}
  \end{problem}

\begin{remark} \rm
This problem could be solved by an algorithm similar to the inductive method used in section 7.  For the induction we will need to solve also the following restricted problems. Given $h$-vectors $h_1$ and $h_2$ of decreasing type, and given an integer $m$ such that for each $i=1,2$, either $m=s_i$
or $m\ge t_i$, solve the restricted problem of finding the maximum number of intersection points of integral ACM curves $C_1$ and $C_2$ with $h$-vectors $h_1$ and $h_2$, subject to the condition that $C_1\cup C_2$ be contained in an integral surface $X$ of degree $m$. To do this, make an induction: whenever there is a curve $C_2'\thicksim C_2-H$ on $X$, use by induction the solution of the problem $h_1$, $h_2'$, $m$. Note that $C_1\cdot C_2$ is maximum on $X$ if and only if $C_1\cdot C_2'$ is maximum on $X$. The difficulty is to show that an inductive step is always possible with $C_2'$ integral and that it will lead to a new smooth ACM curve $C_2$. We give an example to illustrate the process.
\end{remark}

\begin{example} \rm
Let  $h_1: \ 1 \ 2 \ 3 \ 4$ and $h_2: \ 1 \ 2 \ 3 \ 4 \ 3 \ 2 \ 1$, and $m\ge 4$.

\vskip 2mm
(a) If we take $m=4$, the solution is simple. $C_1$ lies on a smooth surface $X$ of degree 4. On that surface we can realize $C_2$ as $4H$. So the intersection number is $4\deg(C_1)=40$ and is realized by $h_3:  \ 1 \ 2 \ 3 \ 4 \ 4 \ 4 \ 4 \ 4$ which does not represent $G_{CM}(26,4)$ but gives the
the maximum intersection of $C_1$ and $C_2$ also without restriction. Since $C_1\cup C_2$ is also contained in an integral surface of degree 8, the answer will serve for any $m\ge 8$.

\vskip 2mm
(b) Suppose we require $m=5$. We can place both curves on a surface $X_5$ of degree 5. Then $C_2\thicksim L+3H$, where $L$ is a line in $X_5$, with $h_2':  \ 1 $, The union of $h_1$ and $h_2'$ will be $1 \ 2 \ 3 \ 4 \ 1$. Adding back $3H$ on $X_5$ gives $h_3: \ 1 \ 2 \ 3 \ 4  \ 5 \ 5 \  5 \ 1$.  One must verify that the linear system $|L+3H|$ on $X_5$ contains a smooth curve $C_2$. This answer serves for $m=5$ and $m\ge 7$.

\vskip 2mm
(c) If we require $m=6$, put $C_2$ in a surface $X_6$ of degree 6. Subtract two hyperplanes to get $h_2': \ 1 \ 2 \ 1 $. This reduces to the new problem $h_1: \ 1 \ 2 \ 3 \ 4 $ and $h_2': º 1 \ 2 \ 1 $ with $m=6$. These are ordinary ACM curves, whose maximum intersection is ACM with $h_3': \ 1 \ 2 \ 3 \ 4 \ 3 \ 1$.
Adding back $2H$ on $X_6$ gives $h_3: \ 1 \ 2 \ 3 \ 4 \ 5 \ 6 \ 4 \ 1 $. Again we must verify that one can obtain a smooth curve $C_2$ in this way, and that the intersection is maximum (which it is since the intersection of $C_1$ and $C_2'$ is).
\end{example}

\begin{remark} \rm
Concerning Problem 8.1 (b3), it is worthwhile maintaining the distinction between glicci and strongly glicci, because there are examples of zero-schemes in $\PP^3$ that are glicci but not strongly glicci \cite{EHS}, and it is still unknown whether every zero-scheme in $\PP^3$ is glicci. We have seen that for complete intersections $C_1$, $C_2$ the intersection $T$ is also a complete intersection (Theorem \ref{ci}). In the case of linked curves, $T$ is arithmetically Gorenstein (Proposition  \ref{Tlinked}), and for ordinary ACM curves, the intersection $T$ is at least strongly glicci (Theorem \ref{mainthmordinarycurves}). We should perhaps add that in this last case, $T$ need not be arithmetically Gorenstein. Indeed, the maximum intersection of curves with $h$-vectors $h_1: \ 1 \ 2 \ 1$ and $h_2: \ 1 \ 2 \ 2$  on a quadric surface is a set of ten points, and there is no non planar arithmetically Gorenstein set of 10 points in $\PP^3$
\end{remark}
\vskip 4mm
\noindent  {\bf Acknowledgements:}   This material is based upon work supported by the National Science Foundation under Grant No. 0932078 000, while the second author was in residence at the Mathematical Science Research Institute (MSRI) in Berkeley, California, during the Commutative Algebra Program, 2012-13. We would like to thank the MSRI
  for kind
 hospitality.

%
%

\end{document}